\documentclass[a4paper,fleqn]{cas-sc}

\usepackage[ruled,vlined,linesnumbered]{algorithm2e}

\usepackage[authoryear,longnamesfirst,sort&compress]{natbib}
\usepackage{pgfplots}
\pgfplotsset{compat=1.16}

\usepackage{graphicx}
\usepackage{hyperref}

\usepackage{amsmath}
\usepackage{xspace}
\usepackage{amssymb}
\usepackage{amsthm,subcaption}

\usepackage{tikz,float}
\usetikzlibrary{positioning, arrows.meta, calc}
\usetikzlibrary{arrows.meta, decorations.markings, calc}

\def\tsc#1{\csdef{#1}{\textsc{\lowercase{#1}}\xspace}}
\tsc{WGM}
\tsc{QE}

\begin{document}
\let\WriteBookmarks\relax
\def\floatpagepagefraction{1}
\def\textpagefraction{.001}

\shorttitle{Moment matching quadratic manifolds}

\shortauthors{R. Padhi, S. Gugercin}  

\title[mode = title]{Beyond linear subspaces: Nonlinear moment matching meets quadratic manifolds}

\author[1]{Reetish Padhi}[orcid=0009-0000-9165-9844]

\cormark[1]

\ead{reetishp@vt.edu}

\ead[url]{https://rewtus.github.io/reetish/}

\credit{Conceptualization, Investigation, Methodology, Software, Formal analysis, Validation, Writing – original draft, Writing – review and editing}

\affiliation[1]{organization={Department of Mathematics, Virginia Tech}, 
            city={Blacksburg},
            postcode={24061}, 
            state={Virginia},
            country={USA}}

\author[2]{Serkan Gugercin}[orcid = 0000-0003-4564-5999]

\ead{gugercin@vt.edu}

\ead[url]{https://gugercin.math.vt.edu}

\credit{Conceptualization, Investigation, Methodology, Supervision, Formal analysis, Validation, Writing – review and editing}

\affiliation[2]{organization={Department of Mathematics, Virginia Tech},
            city={Blacksburg},
            postcode={24061}, 
            state={Virginia},
            country={USA}}
            
\cortext[1]{Corresponding author}

\begin{abstract}
Quadratic manifold-based model order reduction offers a viable pathway to circumvent the limitations of linear subspaces for linear control systems characterized by slow Kolmogorov $n$-width decay. However, a system-theoretic framework for constructing such quadratic approximations remains absent from the literature. This paper presents a system-agnostic, optimization-free framework for the direct construction of quadratic projection matrices. We prove that the synthesized reduced-order model  matches the nonlinear moments of the full-order system and preserves its exact center manifold mapping, thereby ensuring asymptotic tracking of steady-state outputs under specific input classes. Numerical results on transport-dominated benchmark problems, namely, the one-dimensional damped wave and  advection equations, show that the proposed framework achieves high-fidelity trajectory reconstruction within a significantly reduced-dimensional state space, yielding substantial online computational savings.
\end{abstract}

\begin{keywords}
 Quadratic manifolds\sep Model order reduction \sep Nonlinear moment matching \sep Kolmogorov $n$-width \sep LTI systems \sep Transport dominated problems
\end{keywords}

\maketitle

\theoremstyle{plain}  
\newtheorem{thm}{Theorem}[section]
\newtheorem{lem}[thm]{Lemma}

\newtheorem{cor}[thm]{Corollary}
\newtheorem{alg}[thm]{Algorithm}
\theoremstyle{definition} 
\newtheorem{defi}[thm]{Definition}
\newtheorem{conj}[thm]{Conjecture}
\newtheorem{claim}[thm]{Claim}
\newtheorem{prop}[thm]{Proposition}
\newtheorem{notation}[thm]{Notation}

\theoremstyle{remark}  
 \newtheorem{rem}[thm]{Remark}
\newtheorem{com}[thm]{Comment}

\newcommand*{\reetish}{\textcolor{red}}
\newcommand*{\serkan}{\textcolor{blue}}

\newcommand{\cA}{{\mathcal A}}
\newcommand{\cB}{{\mathcal B}}
\newcommand{\cC}{{\mathcal C}}
\newcommand{\cD}{{\mathcal D}}
\newcommand{\cE}{{\mathcal E}}
\newcommand{\cF}{{\mathcal F}}
\newcommand{\cG}{{\mathcal G}}
\newcommand{\cH}{{\mathcal H}}
\newcommand{\cI}{{\mathcal I}}
\newcommand{\cJ}{{\mathcal J}}
\newcommand{\cK}{{\mathcal K}}
\newcommand{\cL}{{\mathcal L}}
\newcommand{\cM}{{\mathcal M}}
\newcommand{\cN}{{\mathcal N}}
\newcommand{\cO}{{\mathcal O}}
\newcommand{\cP}{{\mathcal P}}
\newcommand{\cQ}{{\mathcal Q}}
\newcommand{\cR}{{\mathcal R}}
\newcommand{\cS}{{\mathcal S}}
\newcommand{\cT}{{\mathcal T}}
\newcommand{\cU}{{\mathcal U}}
\newcommand{\cV}{{\mathcal V}}
\newcommand{\cW}{{\mathcal W}}
\newcommand{\cX}{{\mathcal X}}
\newcommand{\cY}{{\mathcal Y}}
\newcommand{\cZ}{{\mathcal Z}}

\newcommand{\bA}{{\mathbf A}}
\newcommand{\bB}{{\mathbf B}}
\newcommand{\bC}{{\mathbf C}}
\newcommand{\bD}{{\mathbf D}}
\newcommand{\bE}{{\mathbf E}}
\newcommand{\bF}{{\mathbf F}}
\newcommand{\bG}{{\mathbf G}}
\newcommand{\bH}{{\mathbf H}}
\newcommand{\bI}{{\mathbf I}}
\newcommand{\bJ}{{\mathbf J}}
\newcommand{\bK}{{\mathbf K}}
\newcommand{\bL}{{\mathbf L}}
\newcommand{\bM}{{\mathbf M}}
\newcommand{\bN}{{\mathbf N}}
\newcommand{\bO}{{\mathbf O}}
\newcommand{\bP}{{\mathbf P}}
\newcommand{\bQ}{{\mathbf Q}}
\newcommand{\bR}{{\mathbf R}}
\newcommand{\bS}{{\mathbf S}}
\newcommand{\bT}{{\mathbf T}}
\newcommand{\bU}{{\mathbf U}}
\newcommand{\bV}{{\mathbf V}}
\newcommand{\bW}{{\mathbf W}}
\newcommand{\bX}{{\mathbf X}}
\newcommand{\bY}{{\mathbf Y}}
\newcommand{\bZ}{{\mathbf Z}}

\newcommand{\system}{{ \mathbf{\Sigma}}}
\newcommand{\bx}{\mathbf{x}}
\newcommand{\by}{\mathbf{y}}
\newcommand{\bu}{\mathbf{u}}
\newcommand{\bw}{\mathbf{w}}
\newcommand{\bh}{\mathbf{h}}
\newcommand{\bz}{\mathbf{z}}

\newcommand{\tV}{\widetilde V}

\newcommand{\w}{\ensuremath{\omega}}
\newcommand{\hw}{\ensuremath{\hat\omega}}

\newcommand{\hlin}{\bh_1}
\newcommand{\htwo}{\bh_2}
\newcommand{\hthree}{\bh_3}
\newcommand{\hfourone}{\bh_4}

\newcommand{\setV}{\operatorname{Im(V)}}
\newcommand{\setpi}{\Gamma}

\newcommand{\p}{\mathbf \Pi}
\newcommand{\prtwo}{\ensuremath{\Pi_r^{(2)}}}
\newcommand{\ptwo}{\ensuremath{\Pi^{(2)}}}

\newcommand{\bp}{\mathbf{p}}
\newcommand{\pr}{\boldsymbol{\pi_r}}

\newcommand{\matlab}{\textsc{MATLAB}\xspace}

\section{Introduction}
Model Order Reduction (MOR) has emerged as a critical tool for the simulation and control of high-fidelity, large-scale dynamical systems. By approximating the state of a high-dimensional system within a low-dimensional subspace/manifold, MOR enables significant computational savings while maintaining key mathematical properties of the original system. Traditional reduction techniques focus on constructing a reduced model by projecting the state onto a linear subspace. For linear time-invariant (LTI) systems, (linear) projection-based methods such as interpolatory methods \cite{morAntBG20}, balancing-based approaches \cite{BenB17}, and Proper Orthogonal Decomposition (POD) \cite{volkwein2011pod}  are well-established. However, the efficacy of linear MOR is fundamentally governed by the Kolmogorov $n$-width \cite{pinkus2012n}, which represents the worst-case error arising from the projection of the solution manifold onto the best-possible linear subspace of dimension $r \ll n$. For many elliptic or parabolic PDEs, the Kolmogorov $n$-width decays exponentially with $r$, allowing for low dimensional reduced order models (ROM). Conversely, for hyperbolic or transport-dominated problems, the decay of these widths is significantly slower  \cite{kol_wave_decay, peherstorfer2022breaking}. This ``slow decay" constitutes a barrier to reducibility for linear methods. In \cite{unger2019kolmogorov}, the authors show that for LTI systems, the Kolmogorov $n$-widths coincide with the Hankel singular values \cite{ACA05}, thus connecting the concept of the Kolmogorov $n$-widths to system-theoretic objects.

Nonlinear dimensionality reduction techniques have emerged as a means to circumvent the Kolmogorov $n$-width barrier using the theory of nonlinear manifolds \cite{QM_framework}. Among these, machine learning approaches like the autoencoder-based frameworks, utilize deep learning architectures to map high-dimensional data to a latent space via a nonlinear mapping \cite{buchfink2023symplectic, otto2023learning,fresca2021comprehensive,fresca2022pod,kim2022fast,kadeethum2022non}. Alternatively, dictionary and localized basis methods, e.g., \cite{amsallem2012nonlinear, daniel2022physics, geelen2022localized, rewienski2003trajectory}, construct a library of localized, typically time-invariant bases that partition the state space, adaptively selecting the optimal local subspace online during system evolution. 

Shift based methods consider explicit, time-dependent spatial shifts or transport maps to align moving features along solution trajectories~\cite{reiss2018shifted, black2020projection, reiss2021optimization, papapicco2022neural, burela2023parametric, krah2025robust}.
Freezing techniques and symmetry-based reduction frameworks exploit symmetry/equivariance of the original system. They involve Lie group actions and algebraic conditions to transform the governing equations into a moving frame of reference, effectively ``freezing'' traveling fronts into stationary profiles before performing dimensionality reduction~\cite{rowley2000reconstruction, rowley2003reduction, beyn2004freezing, ohlberger2013nonlinear}.
A comprehensive review and classification of these nonlinear approaches, particularly for transport-dominated problems, can be found in~\cite{hesthaven2026nonlinear}.

Existing literature on quadratic manifolds \cite{BARNETT_QM1,benner2023quadratic,geelen2023operator,sharma2023symplectic,schwerdtner2024greedy,schwerdtner2025empirical,schwerdtner2025online,paxton2026fast,glas2026structure,rutzmoser2017generalization,schwerdtner2024online} has considered different methods of picking the linear and quadratic projection matrices. For instance, \cite{ rutzmoser2017generalization} utilizes structural properties of the governing equations, while \cite{geelen2023operator} proposes an Operator Inference (OpInf) approach, where the projection subspaces are learned in a data-driven fashion by solving an optimization problem over state snapshots. In \cite{benner2023quadratic}, the authors propose a quadratic decoder approach for the approximation of nonlinear systems. A greedy approach of selecting quadratic projection bases has been presented in \cite{schwerdtner2024greedy} which was adapted to an online streaming setting in \cite{schwerdtner2024online}. On the other hand, in \cite{paxton2026fast}, the quadratic projection matrices are constructed by solving an optimization problem over the Stiefel manifold. Structure preserving model reduction using quadratic manifolds for Port-Hamiltonian systems has been studied in \cite{sharma2023symplectic,glas2026structure}. In \cite{QM_framework}, the authors present a unifying differential-geometric framework for nonlinear manifold model reduction. Higher order polynomial and rational manifold approximations have been explored in \cite{geelen2023learning,geelen2024learning,klein2025entropy,buchfink2024approximation}.

While quadratic manifold approaches offer a powerful alternative to linear subspaces, they yield reduced systems with higher-order state (nonlinear) dependencies. This structural complexity often makes it difficult to establish direct theoretical comparisons or rigorous statements relating the full- and reduced-order models. Furthermore, existing methods typically rely on optimization formulations to construct these manifolds, which require empirical hyperparameter tuning. To overcome these limitations, the main goal of this work is to provide a purely system-theoretic construction of the quadratic manifold.

A natural choice for establishing system-theoretic guarantees is interpolatory (also known as moment-matching based) model reduction, which traditionally focuses on rational interpolation of transfer functions in the frequency domain; see \cite{morAntBG20} for a comprehensive survey on linear time-invariant systems. For structured nonlinear systems, such as bilinear and quadratic-bilinear systems, the input-output behaviour is characterized by Volterra kernels \cite{rugh1981nonlinear} and the interpolatory methods (moment matching) extends to interpolating these kernels or multivariate transfer functions; see, e.g., \cite{flagg2015multipoint, benner2024structured, werner2021structure,gosea2018data,benner2015two, breiten2010krylov, gu2011qlmor, breiten2012interpolation,  bai2006projection} and the references therein. Alternatively, in \cite{astolfi2010}, the notion of moment matching was reformulated as matching the steady-state output response of a dynamical system driven by an autonomous signal generator. In this framework, ``nonlinear moments'' of a dynamical system is characterized by an invariant center manifold mapping satisfying a generalized partial differential equation (invariance equation). This formulation has been expanded to quadratic-bilinear and general nonlinear systems \cite{bai2022model, scarciotti2017nonlinear,simard2024parameterization}, and extended to non-intrusive, data-driven settings \cite{scarciotti2017data, moreschini2025moment}. We refer the reader to \cite{astolfi_10year_survey, scarciotti2024interconnection} for a detailed survey of these methods.

While center-manifold-based moment matching provides an elegant framework for analyzing steady-state output responses, it does not directly yield explicit projection bases for state-space model reduction. In that setting, the invariant manifold mapping remains an abstract time-domain evaluation operator rather than an explicit tool for constructing trial spaces. In this paper, we bridge the gap between nonlinear moment-matching theory and quadratic manifold MOR by reformulating moment matching within a quadratic projection framework. Specifically, Theorem~\ref{thm:quad_pi} establishes that a quadratic manifold structure emerges naturally for a special class of signal generators, directly yielding explicit projection matrices. This formulation provides a practical, computationally efficient approach to construct quadratic reduced-order models with system-theoretic guarantees. By shifting the paradigm from empirical snapshot optimization to system-theoretic interpolation, we prove that the quadratic reduced system matches the nonlinear moments generated by a user-specified signal space (Figure~\ref{fig:introduction_schematic}). The primary contributions of this work are summarized below:

\begin{figure}
    \centering
    \def\r{2.5}

\begin{tikzpicture}[
    block/.style={
        draw, 
        rectangle, 
        line width=1.1pt, 
        rounded corners=4pt, 
        align=center,
        fill=white
    },
    image_placeholder/.style={
        draw=blue, 
        dashed, 
        line width=0.8pt, 
        fill=blue!5, 
        text=blue, 
        font=\small\ttfamily,
        minimum width=2.4cm,
        minimum height=0.8cm,
        inner sep=4pt,
        rounded corners=2pt
    },
    arrow/.style={
        -Latex, 
        thick, 
        line width=1pt
    }
]

    \node[block, minimum width=3.6cm, minimum height=2.5cm] (gen) at (-6.5, 0) {};

    \node[block, minimum width=4.8cm, minimum height=3.4cm] (sys1) at (0, 2.6) {};

    \node[block, minimum width=4.8cm, minimum height=3.4cm] (sys2) at (0, -2.6) {};

    \node[block, minimum width=3.6cm, minimum height=2.5cm] (comp) at (6.5, 0) {};

    \draw[line width=0.8pt, <->, blue] ($(gen.center) + (0, -0.2)$) -- ($(gen.center) + (0, 0.8)$);

    \draw[orange, line width=1.2pt] plot[smooth, tension=0.5] coordinates {
        ($(gen.center) + (-1.6, 0.1)$)
        ($(gen.center) + (-1.1, 0.5)$)
        ($(gen.center) + (-0.6, 0.0)$)
        ($(gen.center) + (0.0, 0.4)$)
        ($(gen.center) + (0.5, -0.1)$)
        ($(gen.center) + (1.1, 0.3)$)
        ($(gen.center) + (1.6, 0.1)$)
    };
    
    \draw[magenta, line width=1.2pt] plot[smooth, tension=0.5] coordinates {
        ($(gen.center) + (-1.3, 0.1)$)
        ($(gen.center) + (-0.8, 0.5)$)
        ($(gen.center) + (-0.3, 0.0)$)
        ($(gen.center) + (0.3, 0.4)$)
        ($(gen.center) + (0.8, -0.1)$)
        ($(gen.center) + (1.4, 0.3)$)
    };

    \node[black, font=\small] at ($(gen.center) + (1.3, 0.6)$) {};
    
    \draw[line width=0.8pt, <->, blue] ($(gen.center) + (-1.6,0.25)$) -- ($(gen.center) + (1.6,0.25)$);
    
    \node[anchor=center, align=center] at ($(gen.center) + (0, -0.6)$) {Signal space \\ generates input};
    \node[anchor=center] at ($(gen.center) + (0, -1.0)$) {};

    \node[anchor=north,align=center] at ($(sys1.north) + (0, -0.2)$) {Quadratic center manifold of FOM\\
    (dimension $n$)};

    \filldraw[fill=orange!30, draw=orange!80!black, thick] 
            ($(sys1.north) + ({-0.9*\r}, {-0.35*\r})$) to[out=-25, in=205] ($(sys1.north) + ({0.9*\r}, {-0.35*\r})$) --
            ($(sys1.north) + ({0.9*\r}, {-1.05*\r})$) to[out=205, in=-25] ($(sys1.north) + ({-0.9*\r}, {-1.05*\r})$) -- cycle;

    \node[anchor=north, align=center] at ($(sys1.north) + (0, -1.65)$) {Quadratic manifold \\ $\p=\mathbf{V}_1\boldsymbol\pi_r + \mathbf{V}_2(\boldsymbol\pi_r\otimes\boldsymbol\pi_r)$};

    \node[anchor=north,align=center] at ($(sys2.north) + (0, -0.1)$) {Linear center manifold of\\  quadratic ROM (dimension $r$)};

    \draw[arrow, black] ($(sys2.north) + (0, 0)$) --($(sys1.south) + (0, 0)$)
    node[midway, left=2pt, align=left, color=black] {$\bV_1,\bV_2$ map ROM\\ center manifold to\\ FOM center manifold};

    \draw[arrow, black] ($(sys1.south) + (0.25, 0)$) --($(sys2.north) + (0.25, 0)$)
    node[midway, right=2pt, align=left, color=black] {Quadratic state \\reduction with \\$\bV_1$ and $\bV_2$};
    
    \filldraw[fill=green!25, draw=green!60!black, thick] 
            ($(sys2.center) + ({-0.85*\r}, 0.7)$) -- ($(sys2.center) + ({0.85*\r}, 0.1)$) --
            ($(sys2.center) + ({0.85*\r}, -1.3)$) -- ($(sys2.center) + ({-0.85*\r}, -0.7)$) -- cycle;

    \node[anchor=south, align=center] at ($(sys2.north) + (0, -2.35)$) {Linear subspace $\boldsymbol \pi_r$\\ of dimension $r \ll n$};

    \draw[blue,line width=0.7pt, <->] ($(comp.center) + (-1.3, 0.25)$) -- ($(comp.center) + (1.3, 0.25)$);
    \draw[blue,line width=0.7pt, <->] ($(comp.center) + (0, -0.3)$) -- ($(comp.center) + (0, 0.8)$);

    \draw[black, line width=1.2pt, shift={(comp.center)}] 
        plot[variable=\t, domain=0:12, samples=200] 
        ({\t/6 - 1.0}, {0.25 + 0.8 * exp(-0.067*\t) * (0.5*sin(\t r) + 0.1*cos(5*\t r))});
        
    \draw[red, dashed, line width=1.2pt, shift={(comp.center)}] 
        plot[variable=\t, domain=0:12, samples=200] 
        ({\t/6 - 1.0}, {0.25 + 0.8 * exp(-0.067*\t) * (0.5*sin(\t r) + 0.1*cos(5*\t r))});
        
    \node[anchor=south, align=center] at ($(comp.south) + (0, 0.1)$) {Matching steady \\ state outputs};

    \draw[arrow] ($(gen.east) + (0, 0.4)$) -- ($(sys1.west) + (0, -0.3)$) 
        node[midway, above, sloped] {Input};
    \draw[arrow] ($(gen.east) + (0, -0.4)$) -- ($(sys2.west) + (0, 0.3)$) 
        node[midway, above, sloped] {Input};

    \draw[arrow] ($(sys1.east) + (0, -0.3)$) -- ($(comp.west) + (0, 0.4)$) 
        node[midway, above, sloped] {Steady output};
    \draw[arrow] ($(sys2.east) + (0, 0.3)$) -- ($(comp.west) + (0, -0.4)$) 
        node[midway, above, sloped] {Steady output};

\end{tikzpicture}
    \caption{Schematic representing the nonlinear moment matching framework using quadratic manifolds. The center manifold of the reduced system (dimension $r$) under the quadratic approximation maps to the center manifold of the original system (dimension $n$) and hence has the same asymptotic steady state output as the original system for inputs generated by signal space. The linear and quadratic matrices $\bV_1$ and $\bV_2$ serve the role of defining the projection matrices for quadratic state reduction.}
    \label{fig:introduction_schematic}
\end{figure}

\begin{itemize}
    \item We introduce an optimization-free algorithm for constructing quadratic projection matrices by solving a sequence of decoupled linear Sylvester equations. In this approach, the choice of the signal space acts as a high-level design specification that can be freely adapted to match the characteristic frequencies or transient behaviors of a target application. Once this operating signal space is selected, the projection bases are uniquely determined with a closed-form expression. Unlike prevailing snapshot-based quadratic manifold techniques, this method requires no subsequent empirical hyperparameter tuning, optimization parameter sweeps, or regularization.

    \item We establish the core theoretical foundation of this framework in Theorem~\ref{thm:moment_matching}. Specifically, we prove that the resulting reduced-order model preserves the nonlinear moments of the full-order system, using the notion of nonlinear moments introduced in~\cite{astolfi2010}. Additionally, we show that the center manifold of the reduced system maps to the quadratic center manifold of the original full-order model (FOM), guaranteeing identical steady-state outputs for signals generated by the specified signal space. This leads to Algorithm~\ref{alg:moment_matching_rom}, which provides a projection-based reformulation of the nonlinear-moment matching framework in the context of quadratic manifolds. 
    
    \item We demonstrate the 
efficacy of the proposed framework on the transport-dominated, one-dimensional damped wave and advection equations. The numerical experiments verify that our quadratic ROM accurately recovers the high-fidelity, full-order steady-state trajectories on the center manifold and thereby, achieves ``nonlinear moment matching". By projecting the high-dimensional state space down to the low dimensional subspace, the framework yields a substantial reduction in simulation time during the ODE solver integration phase, proving that the online savings vastly outweigh the overhead of handling the state-dependent reduced mass matrix. However, state-independent reduced mass matrix can be enforced as in other quadratic manifold approaches. 
\end{itemize}

The remainder of this paper is structured as follows. 
In Section~\ref{sec:background}, we recall relevant matrix and tensor definitions and establish the foundational theory behind nonlinear moment matching, invariance equations, and interpolatory model reduction. 
Section~\ref{sec:qm_rom} introduces the formulation of the quadratic reduced-order model \eqref{eq:rom_state}--\eqref{eq:rom_group} resulting from projecting linear full-order dynamics onto quadratic state approximation. 
In Section~\ref{sec:theoretical}, we establish the core theoretical foundation of the framework. First, we prove that the center manifold of the full-order system under the driving signal generator is a quadratic manifold and derive closed-form expressions for the projection matrices as solutions of decoupled Sylvester equations. Finally, we prove that the quadratic reduced system constructed with these matrices achieves nonlinear moment matching. The proposed computational framework based on this theoretical analysis is summarized in Algorithm~\ref{alg:moment_matching_rom}. 
Section~\ref{sec:signal_space} provides practical guidelines for parameterizing the signal generator to enforce application-specific interpolation conditions. 
Section~\ref{sec:numerical} validates the theoretical guarantees on transport-dominated benchmarks, specifically the 1D advection and damped wave equations, and compares performance against standard linear rational interpolation and existing quadratic manifold techniques.

\section{Background}\label{sec:background}

In this section, we establish the mathematical notation, definitions, and prerequisite theory for linear and nonlinear moment matching. We begin in Section~\ref{sec:notation} by defining the matrix, vector, and tensor operations used throughout the paper. Section~\ref{sec:nonlinearmm} reviews center manifold theory and the notion of nonlinear moments for a dynamical system along with connections with linear moment matching framework in Section~\ref{subsec:linear_moment}. Finally, Section~\ref{sec:moment_matching_def} reviews the formal definitions of moment matching and interpolating reduced-order model.  

\subsection{Notation}
\label{sec:notation}

In this section, we establish the mathematical notation and tensor operations utilized throughout this manuscript. For a review of these standard definitions and properties, the reader is referred to \cite{brewer1978kronecker, graham2018kronecker}. We let $\bI_n$ denote the identity matrix of dimension $n \times n$ (where the subscript may be omitted if the dimension is clear from the context) and $\mathbf{0}$ denote a zero matrix of appropriate dimensions. The Kronecker product of two matrices $\bA \in \mathbb{R}^{m \times n}$ and $\bB \in \mathbb{R}^{p \times q}$ is denoted by $\bA \otimes \bB \in \mathbb{R}^{mp \times nq}$. The Kronecker sum of two square matrices $\bA \in \mathbb{R}^{n \times n}$ and $\bB \in \mathbb{R}^{m \times m}$ is denoted by $\bA \oplus \bB\in \mathbb R^{nm\times nm}$ and defined as
\begin{equation*}
    \bA \oplus \bB = (\bA \otimes \bI_m) + (\bI_n \otimes \bB).
\end{equation*}
To facilitate a compact representation of high-order polynomial and multinomial terms, we utilize a shorthand notation for repeated Kronecker operations. For a square matrix $\bX \in \mathbb{R}^{n \times n}$, its $k$-th Kronecker power $\bX^{(k)}$ and its $k$-th Kronecker sum $\bX \oplus_k \bX$ are defined inductively, for $k \geq 2$, as
\begin{align*}
    \bX^{(k)} &= \bX^{(k-1)} \otimes \bX, \quad \text{with } \bX^{(1)} = \bX, \\
    \bX \oplus_k \bX &= (\bX \oplus_{k-1} \bX) \otimes \bI_n + \bI_n^{(k-1)} \otimes \bX, \quad \text{with } \bX \oplus_1 \bX = \bX.
\end{align*}
Equivalently, the $k$-th Kronecker sum can be expressed explicitly as
\begin{equation}\label{def:kron_sum}
    \bX \oplus_k \bX = \sum_{j=1}^{k} \bI_n^{(j-1)} \otimes \bX \otimes \bI_n^{(k-j)},
\end{equation}
where $\bI_n^{(0)} = 1$. Under this notation, a quadratic state interaction term for a vector $\bw$ simplifies directly to $\bw^{(2)} = \bw \otimes \bw$. Additionally, for matrices of compatible dimensions, the well-known mixed-product property holds
\begin{equation}
    (\bA \bB) \otimes (\bC \bD) = (\bA \otimes \bC)(\bB \otimes \bD).
    \label{eq:kron_mixed_product}
\end{equation}
These algebraic properties are used frequently to simplify expressions throughout this paper.

\subsection{Nonlinear moments and the invariance equation}\label{sec:nonlinearmm}

In the LTI setting, traditional projection-based interpolatory model reduction methods deal with \emph{linear moments}, which are discussed in Sec.~\ref{subsec:linear_moment}. The notion of moment of a dynamical system was reformulated using the theory of steady-state responses and invariant manifolds in \cite{astolfi2010} leading to a unified framework to both redefine the notion of \emph{linear moments} for linear and nonlinear systems and introduce the notion of \emph{nonlinear moments} for both linear and nonlinear systems. To formalize this approach, consider a nonlinear full-order system (FOM) described by
\begin{equation}\label{eq:fom_nonlinear}
    \begin{aligned}
    \dot \bx(t)&=f(\bx(t),\bu(t)),\quad \bx(0)=\bx_0\in\mathbb R^n\\
    \by(t)&=h(\bx(t)),
\end{aligned}
\end{equation} where $\bx(t)\in\mathbb R^n$ represents the state, $\bu(t)\in\mathbb R^m$ represents the input vector, $\by(t)\in\mathbb R^p$ represents the output. The map $f:\mathbb R^n \times \mathbb R^m \rightarrow \mathbb R^n$ describes the nonlinear state evolution and $h: \mathbb R^n \rightarrow \mathbb R^p$ represents the output map for the system. Also consider an autonomous (nonlinear) signal generator defined by
\begin{equation} \label{eq:nonlinear_sig}
    \begin{aligned}
    \dot{\w}(t) &= \mathscr{s}(\w(t)), \quad \w(0)=\w_0\in\mathbb R^r \\
    \by_{d}(t)&=\mathscr l(\w(t)), 
\end{aligned}
\end{equation} where $\w(t)\in\mathbb{R}^r$ represents the state of the signal generator space, $\mathscr s: \mathbb R^r \rightarrow \mathbb R^r$ is a nonlinear function that describes the time-evolution of the signal space and $\mathscr l:\mathbb R^r \rightarrow \mathbb R^m$ is a nonlinear map from the signal generator state, $\w(t)$, to the signal generator output, $\by_d(t)\in\mathbb R^m$. Suppose the nonlinear system in \eqref{eq:fom_nonlinear} is
driven by an input from the autonomous, nonlinear signal generator in \eqref{eq:nonlinear_sig}, i.e., when $\bu(t)=\by_d(t)$. Then the combined dynamics of the two systems \eqref{eq:fom_nonlinear} and \eqref{eq:nonlinear_sig} can be represented by the interconnected system\begin{equation}\label{eq:interconnected_sys}
    \begin{aligned}
    \dot{\w}(t)&=\mathscr{s}(\w(t)),~&\w(0)=\w_0\in\mathbb R^r\\
    \dot{\bx}(t)&= f(\bx(t),\mathscr l(\w(t))),~&\bx(0)=\bx_0\in\mathbb R^n\\\
    \by(t)&=h(\bx(t)).
\end{aligned}
\end{equation}
Under appropriate assumptions, specifically that the unforced system $\dot{\bx} = f(\bx, 0)$ has a locally exponentially stable equilibrium at the origin and the signal generator possesses neutrally stable dynamics (see \cite{astolfi2010}), the interconnected system in \eqref{eq:interconnected_sys} has a locally invariant (center) manifold described by $\bx = \p(\w(t))$. For more details on center manifold theory and its applications see \cite{carr2012applications}.

\begin{defi}[Nonlinear Invariance Equation \cite{astolfi2010}] \label{def:invariance_eqn}
The mapping $\p(\cdot)$, which parameterizes the invariant manifold associated with $(\mathscr s, \mathscr l)$ is the unique local solution to the  partial differential equation (PDE)
\begin{equation} \label{eq:nonlinear_sylvester}
    \frac{\partial \p(\w)}{\partial \w} \mathscr s(\w) = f(\p(\w), l(\w)), \quad \p(0)=0.
\end{equation}
The PDE~\eqref{eq:nonlinear_sylvester} is commonly referred to as the nonlinear invariance equation. 
\end{defi}

\begin{defi}[Nonlinear Moments \cite{astolfi2010}]
The nonlinear moment of the system associated with $(\mathscr s,\mathscr l)$ is defined as the composite mapping $\mathcal{M}_{\mathscr s,\mathscr l}: \mathbb{R}^r \to \mathbb{R}^p$ given by the output of the system restricted to the invariant manifold:
\begin{equation}\label{eq:nonlinear_moment_def}
    \mathcal{M}_{\mathscr s,\mathscr l}(\w) = h\circ\p(\w).
\end{equation}
\end{defi}

We note that the nonlinear moment and center manifold are associated with a signal generator $(\mathscr s,\mathscr l)$, i.e., the center manifold and nonlinear moment, are different for a different choice of signal generator. 

The nonlinear moment matching framework is illustrated in Fig.~\ref{fig:nonlinear_moment_def}, where the nonlinear moment is a mapping from the signal generator to the steady-state output of the system. The left panel defines the signal space, where the signal generator is governed by the dynamics $\dot{\w} = \mathscr s(\w)$ and generates the input $\bu = \mathscr l(\w)$. The central panel depicts the center manifold $\p(\w)$, which is an invariant geometric surface defined as the solution of the nonlinear invariance equation \eqref{eq:nonlinear_sylvester}. When the system is driven by this signal generator, an arbitrary state trajectory $\bx(t)$ (shown in blue) exhibits initial transient dynamics before eventually converging onto this manifold. The trajectory strictly restricted to this manifold represents the exact steady-state behavior of the system, denoted by $\bx_{ss}(t) = \p(\w(t))$ (shown in black). Finally, the right panel illustrates the output converging to its steady state as the transient decays, matching the steady-state output generated by passing the manifold dynamics through the output function $h(\cdot)$. The overarching nonlinear moment, defined as $\mathcal{M}_{s,l}(\w) = h \circ \p(\w)$, fundamentally captures this relationship, directly linking the signal space to the steady-state output while entirely bypassing the transient phase. 

\begin{figure}
    \centering
    \begin{center}
\resizebox{\textwidth}{!}{
\begin{tikzpicture}
    \node[
        draw=orange!80!black, 
        line width=2pt, 
        inner sep=10pt, 
        align=center, 
        fill=white,
        minimum width=5.5cm,
        minimum height=4.5cm, font=\large
    ] (leftbox) at (0,0) {
    \underline{\textbf{Nonlinear signal generator}}\\ \\
        $\dot{\w}(t) = \mathscr s(\w(t))$ \\[0.5em]
        $\bu(t) = \mathscr l(\w(t))$ \\[0.5em]
        $\w(0) = \w_0 \in \mathbb{R}^r$ \\[0.5em]
        $(\mathscr s, \mathscr l,\w_0)$
    };

    \begin{axis}[
        name=middleaxis,
        at={(leftbox.east)},
        anchor=west,
        xshift=2cm, 
        axis lines=none,
        width=7cm, height=6cm, 
        view={0}{15}, 
        clip=false 
    ]
        
        \addplot3[
            surf, color=yellow, faceted color=orange, 
            domain=-2:1.5, domain y=0:1
        ] {(x^2+y^2)};

        \addplot3[
            black, thick, samples=50, 
            domain=-1.5:1, y domain=0:0, samples y=1    
        ] (x,1/2,{1/4+x^2});

        \addplot3[
            blue, thick, samples=50, 
            domain=-1.5:1, y domain=0:0, samples y=1  
        ] (x,1/2,{0.001*exp(1.5*(2-2*x)) + 1/4+x^2});

        \node at (axis cs: -0.25, 0.5, 0.2) [anchor=north, font=\small] {\color{black} $\mathbf x_{\text{ss}}(t)$};

        \node at (axis cs: -1.45, 0.5, 4.5) [anchor=east, font=\small, blue] {$\mathbf x(t)$};

        \node at (axis cs: 0, 0.5, 5.0) [
            draw=orange!40!white, fill=white, inner sep=6pt, 
            align=left, line width=1pt
        ] {
            \small \color{black!80!black} $  \frac{\partial }{\partial t} \boldsymbol\pi(\omega)=f(\boldsymbol \pi(\omega),\mathscr l(\omega))$ 
        };
    \end{axis}

    \coordinate (rightboxstart) at ([xshift=2cm]middleaxis.east);

    \begin{scope}[shift={(rightboxstart)}, local bounding box=rightbox]
        
        \draw[orange!80!black, line width=2pt] (0, -2.25) rectangle (5.5, 2.25);

        \draw[->, line width=1.2pt] (0.3, 0) -- (5.2, 0);   
        \draw[<->, line width=1.2pt] (0.8, -1.8) -- (0.8, 1.8);

        \draw[blue!80!black, line width=1.5pt, smooth, samples=100, domain=0.8:5.0] 
            plot (\x, {0.6 * sin((\x - 0.8) * 250)});
            
        \draw[orange!90!black, line width=1.5pt, smooth, samples=100, domain=0.8:5.0] 
            plot (\x, {(0.6+(0.4*exp(-0.5*\x))) * sin((\x - 0.8) * 250)});
    \end{scope}

    \draw[->, line width=1.5pt, shorten >=0.2cm, shorten <=0.2cm] 
        (leftbox.east) -- node[midway, above, font=\normalsize] {\Large$ \boldsymbol{\pi}(\w)$} (middleaxis.west |- leftbox.east);
    
    \draw[->, line width=1.5pt, shorten >=0.2cm, shorten <=0.2cm] 
        (middleaxis.east |- leftbox.east) -- node[midway, above, font=\normalsize] {\Large $h(\cdot)$} (rightbox.west |- leftbox.east);

    \draw[->, line width=1.5pt, rounded corners=10pt, shorten >=0.1cm, shorten <=0.1cm]
        (leftbox.north) 
        -- (leftbox.north |- 0, 3) 
        -- (rightbox.north |- 0, 3) node[midway, above, font=\large] {$\mathcal{M}_{\mathscr s,\mathscr l}(\w)=h\circ \boldsymbol{\pi}(\w)$} 
        -- (rightbox.north);

    \path (middleaxis.west) -- (middleaxis.east) coordinate[midway] (middlecenter);
    
    \node[font=\normalsize] at (leftbox.center |- 0, -2.67) {(Signal space ($\mathscr s,\mathscr l,\omega_0$))};
    \node[font=\normalsize] at (middlecenter |- 0, -2.67) {(Nonlinear center manifold $\boldsymbol \pi(\omega)$)};
    \node[font=\normalsize] at (rightbox.center |- 0, -2.67) {(Steady state output)};

\end{tikzpicture}}
\end{center}
    \caption{Schematic representation of the nonlinear moment framework.}
    \label{fig:nonlinear_moment_def}
\end{figure}

\subsubsection{Formulations of moments for linear systems} \label{subsec:linear_moment}

In this section, we consider the special case when $f(\bx,\bu)=\bA\bx+\bB\bu$ and $g(x)=\bC\bx$ where the state-space matrices $\bA\in\mathbb R^{n\times n},\bB \in \mathbb R^{n\times m}$ and $\bC\in\mathbb R^{p\times n}$ are constant. Under this choice the full-order model (FOM) is described by
\begin{equation}\label{eq:lti_sys} 
    \begin{aligned}
\dot{\bx}(t) &= \bA\bx(t) + \bB\bu(t),\quad \bx(0)=\bx_0\\
\by(t) &=\bC\bx(t),
\end{aligned}
\end{equation}
where $\bx(t) \in \mathbb{R}^n$ represents the state, $\bu(t)\in\mathbb R^m$ represents the input, and $\by(t)\in \mathbb R^p$ represents the output. {Assuming zero initial conditions ($\bx(0) = \mathbf{0}$), taking the Laplace transform converts the differential equations ~\eqref{eq:lti_sys} into an algebraic equation that reads $\bY(s)=\bC(s\mathbf{I} - \bA)^{-1}\bB\bU(s)$ where $\bY(s)$ and $\bU(s)$ denote the Laplace transforms of the output $\by(t)$ and input $\bu(t)$, respectively. The function $\bH(s) = \bC(s\mathbf{I} - \bA)^{-1}\bB\in\mathbb R^{p\times m}$ denotes the associated transfer function of the LTI system and represents the input-output behaviour of the system in the frequency domain. In this LTI setting, the transfer function is a rational function of $s$.}

\begin{defi}[Linear Moments~\cite{ACA05}]\label{def:moments}
Let $\bH(s)$ denote the transfer function of the linear time-invariant (LTI) system in \eqref{eq:lti_sys}. The $0$-th linear moment at a frequency $s^* \in \mathbb{C}$ is defined as the transfer function evaluated at that point, denoted by $\eta_0(s^*) = \bH(s^*)$. For any integer $k \geq 1$, the $k$-th linear moment at $s^*$ is defined as the $k$-th derivative of the transfer function evaluated at $s^*$, given by
\begin{equation*}
    \eta_k(s^*) =(-1)^k \left. \frac{d^k}{ds^k} \bH(s) \right|_{s=s^*}.
\end{equation*}
\end{defi}

Linear moment matching methods construct reduced systems, i.e., systems of the form~\eqref{eq:lti_sys} with a reduced state dimension, whose now lower-degree rational transfer function interpolates the moments of the original linear system as defined in Definition~\ref{def:moments}.  {This frequency-domain formulation is standard in interpolatory projection-based model reduction framework~\cite{morAntBG20}.} Astolfi~\cite{astolfi2010} demonstrated that these linear moments can be equivalently characterized in the time domain as the steady-state output response generated by an autonomous, linear signal generator of dimension $r$,\begin{equation}\label{eq:lin_signal_gen}
\begin{aligned}
    \dot{\w}(t) &= \bS\w(t), \quad \w(0) = \w_0 \in \mathbb{R}^r,\\
    \bu(t) &= \bL\w(t).
\end{aligned}    
\end{equation} where $\bS\in\mathbb R^{r\times r}$, $\bL \in\mathbb R^{m\times r}$ are matrices chosen such that the pair $\bS$ and $\bL$ are locally 
\emph{observable}, i.e., when \begin{align*}
    \operatorname{rank}\left(\left[\begin{array}{c}
        \bL \\ \bL\bS\\ \vdots \\ \bL\bS^{r-1}
    \end{array}\right]\right)=r.
\end{align*} Intuitively, this condition ensures that the input $\bu(t)$ depends on every component of $\w(t)$ with no hidden or redundant components in the signal state. Similar to \eqref{eq:interconnected_sys}, we consider the interconnection of the LTI system in \eqref{eq:lti_sys} and linear signal generator in \eqref{eq:lin_signal_gen}, which leads to the interconnected system\begin{equation}\label{eq:conected_lin_sys}
    \begin{aligned}
    \dot{\w}(t)&=\bS\w(t),~&\w(0)=\w_0\in\mathbb R^r\\
    \dot{\bx}(t)&= \bA\bx(t)+\bB\mathscr \bL\w(t),~&\bx(0)=\bx_0\in\mathbb R^n\\\
    \by(t)&=\bC\bx(t).
\end{aligned}
\end{equation} In this linear setting, the invariance equation~\eqref{eq:nonlinear_sylvester} reduces to a linear matrix equation \eqref{eq:linear_sylvester}. The center manifold $\p(\w)=\p\w$ where $\p\in\mathbb R^{n\times r}$ is a linear subspace obtained as the solution of the Sylvester equation 
\begin{equation} \label{eq:linear_sylvester}
    \bA\p + \bB\bL = \p\bS.
\end{equation} Moreover, the moment $\mathcal M_{\bS,\bL}(\cdot)$ associated with $(\bS,\bL)$ in the sense of \cite{astolfi2010} is given as \begin{align*}
    \mathcal M_{\bS,\bL}(\w)=\bC \p(\w) = \bC \p \w. 
\end{align*} To see how this fits into the classical projection-based framework in \cite{morAntBG20}, consider the case where we want to interpolate the transfer function $\bH(s)$ at $r$ distinct complex points $\{\mu_1, \mu_2, \dots, \mu_r\}$ along the tangential directions $\{\ell_1,\dots, \ell_r \}$. In the interpolatory projection-based framework, this is referred to as one-sided tangential interpolation where we construct a projection matrix $\mathbf{V} \in \mathbb{R}^{n \times r}$ whose columns span the Krylov subspace,
\begin{equation}
    \text{Im}(\mathbf{V}) = \text{span}\left\{ (\mu_1\mathbf{I} - \bA)^{-1}\bB\ell_1, \dots, (\mu_r\mathbf{I} - \bA)^{-1}\bB\ell_r \right\}.
\end{equation}
If we choose the signal generator matrix $\bS$ to be diagonal, $\bS = \text{diag}(\mu_1, \dots, \mu_r)$, and set $\bL = [\ell_1, \dots, \ell_r]$ such that the columns correspond to tangential directions in $\mathbb R^m$, the Sylvester equation \eqref{eq:linear_sylvester} can be solved column-by-column. For the $i$-th column $\p_i$, the equation yields:
\begin{equation}
    \bA\p_i + \bB\ell_i = \mu_i \p_i \implies \p_i = (\mu_i \mathbf{I} - \bA)^{-1}\bB\ell_i.
\end{equation}
Thus, the columns of the steady-state mapping matrix $\p$ are precisely the exact vectors that form the interpolating subspace in the projection-interpolatory framework, $\boldsymbol\Pi=\bV$. Thus, the moment $\mathcal M_{\bS,\bL}(\cdot)$ reduces to
\begin{equation}\label{eq:TF_interpolation}
    \mathcal M_{\bS,\bL}(\w)=\bC\p\w = [\bH(\mu_1)\ell_1\quad\bH(\mu_2)\ell_2\quad \dots\quad  \bH(\mu_r)\ell_r]\w
\end{equation}  From \eqref{eq:TF_interpolation}, we see that the linear moment (and the steady state response) for an LTI system is determined by transfer function evaluations at the frequencies $\mu_1,\dots,\mu_r$ and the nonlinear moment matching framework in \cite{astolfi2010} boils down to the classical interpolatory projection framework~\cite{morAntBG20}.

\begin{figure}
    \centering
    \begin{center}
\resizebox{\textwidth}{!}{
\begin{tikzpicture}

   \node[
        draw=orange!80!black, 
        line width=2pt, 
        inner sep=10pt, 
        align=center, 
        fill=white,
        minimum width=5.5cm,
        minimum height=4.5cm, font=\large
    ] (leftbox) at (0,0) {
        \underline{\textbf{Linear signal generator}}\\ \\
        $\dot{\w}(t) =  \bS\w(t),$ \\[0.5em]
        $\bu(t) = \bL\w(t),$ \\[0.5em]
        $\w(0)=\w_0\in\mathbb R^r$
    };

   \begin{axis}[
        name=middleaxis,
        at={(leftbox.east)},
        anchor=west,
        xshift=2cm, 
        axis lines=none,
        width=7cm, height=6cm, 
        view={0}{15}, 
        clip=false 
    ]
        
        \addplot3[
            surf, color=yellow!80!white, faceted color=orange!80!black, 
            domain=-2:1.5, domain y=0:1
        ] {0.8*x + 1.2*y + 1.5};

        \addplot3[
            black, thick, samples=50, 
            domain=-1.5:1, y domain=0:0, samples y=1    
        ] (x, 0.5, {0.8*x + 2.1});

         \addplot3[
            blue, thick, samples=50, 
            domain=-1.5:1, y domain=0:0, samples y=1  
        ] (x, 0.5, {0.8*x + 2.1 + 0.001*exp(1.5*(2-2*x))});

         \node at (axis cs: -0.25, 0.5, 1.3) [anchor=north, font=\small] {\color{black} $\mathbf x_{\text{ss}}(t)$};

        \node at (axis cs: -1.45, 0.5, 2.5) [anchor=east, font=\small, blue] {$\mathbf x(t)$};

        \node at (axis cs: -1, 0.5, 4.2) [
            draw=orange!40!white, fill=white, inner sep=6pt, 
            align=left, line width=1pt
        ] { Invariance equations: \\
            \small \color{black!80!black} $ \p\bS=\bA\p + \bB \bL$ 
        };
    \end{axis}

   \coordinate (rightboxstart) at ([xshift=2cm]middleaxis.east);

    \begin{scope}[shift={(rightboxstart)}, local bounding box=rightbox]
        
        \draw[orange!80!black, line width=2pt] (0, -2.25) rectangle (5.5, 2.25);

        \draw[->, line width=1.2pt] (0.3, 0) -- (5.2, 0);   
        \draw[<->, line width=1.2pt] (0.8, -1.8) -- (0.8, 1.8);

        \draw[blue!80!black, line width=1.5pt, smooth, samples=100, domain=0.8:5.0] 
            plot (\x, {0.6 * sin((\x - 0.8) * 250)});
            
        \draw[orange!90!black, line width=1.5pt, smooth, samples=100, domain=0.8:5.0] 
            plot (\x, {(0.6+(0.4*exp(-0.5*\x))) * sin((\x - 0.8) * 250)});
    \end{scope}

      \draw[->, line width=1.5pt, shorten >=0.2cm, shorten <=0.2cm] 
        (leftbox.east) -- node[midway, above, font=\normalsize] {\Large$ \boldsymbol{\pi}(\w)$} (middleaxis.west |- leftbox.east);
    
    \draw[->, line width=1.5pt, shorten >=0.2cm, shorten <=0.2cm] 
        (middleaxis.east |- leftbox.east) -- node[midway, above, font=\normalsize] {\Large $\bC\bx$} (rightbox.west |- leftbox.east);

     \draw[->, line width=1.5pt, rounded corners=10pt, shorten >=0.1cm, shorten <=0.1cm]
        (leftbox.north) 
        -- (leftbox.north |- 0, 3) 
        -- (rightbox.north |- 0, 3) node[midway, above, font=\large] {$\mathcal{M}_{\bS,\bL}(\w)= \bC\p\w$} 
        -- (rightbox.north); 

      \path (middleaxis.west) -- (middleaxis.east) coordinate[midway] (middlecenter);
    
    \node[font=\normalsize] at (leftbox.center |- 0, -2.67) {(Signal space ($\bS,\bL,\omega_0$))};
    \node[font=\normalsize] at (middlecenter |- 0, -2.67) {(Linear subspace of dimension $r$)};
    \node[font=\normalsize] at (rightbox.center |- 0, -2.67) {(Steady state output)};

\end{tikzpicture}}
\end{center}
    \caption{{Schematic representation of linear moment for LTI systems.}}
    \label{fig:linear_moment_def}
\end{figure}

{When both the system and the signal generator are linear, the nonlinear moment-matching framework in Fig.~\ref{fig:nonlinear_moment_def} boils down to the linear framework depicted in Fig.~\ref{fig:linear_moment_def}. In this setting, the nonlinear invariance equation \eqref{def:invariance_eqn} reduces to the linear Sylvester equation~\eqref{eq:linear_sylvester}, causing the center manifold to simplify to an $r$-dimensional linear subspace. Due to the invariance property of this subspace, trajectories originating on it remain confined to it for all time. State trajectories starting off on the subspace (such as $x(t)$, shown in blue) undergo an initial transient before decaying onto the subspace to yield the steady-state state trajectory $\bx_{ss}(t)=\bC\p\w(t)$. Consequently, the system output exhibits a corresponding transient phase before converging to its steady-state response. The overarching moment mapping $\mathcal M_{\bS,\bL}(\w)=\bC\p\w$ directly links the signal space to this steady-state output, which, for appropriate choices of $(\bS,\bL)$, simplifies to classical transfer function interpolation as shown in \eqref{eq:TF_interpolation}.}

\begin{rem}\label{rem:choice_SL}
    Thus, in the linear setting with the above choice of signal generator,  the matrices $\bS$ and $\bL$ in the nonlinear moment matching framework act as the direct analogs to the interpolation points $\mu_i$ and tangential directions $\ell_i$ in the classical projection-interpolatory framework in \cite{morAntBG20}. Because the steady-state dynamics restricted to the manifold $\p$ are driven purely by the signal generator (i.e., $\dot{\bx} = \p\bS\w$), a reduced-order model constructed with these matrices intrinsically achieves the desired one-sided tangential interpolation. Consequently, throughout this paper, we treat the signal generator not merely as a fixed input structure, but as tunable design parameters that can be explicitly chosen to enforce targeted interpolation requirements and steady-state fidelity in the reduced system.
\end{rem}
\subsubsection{Moment-matching (interpolatory) reduced systems} \label{sec:moment_matching_def}
This section reviews the formal definition of moment matching for nonlinear systems and establishes the general structure of an interpolatory reduced-order model (ROM). Consider the full-order system described in \eqref{eq:fom_nonlinear}, and let a candidate reduced system be given by
\begin{equation}\label{eq:nonlinear_rom}
    \begin{aligned}
    \dot{\hat{\bx}}(t) &= \hat{f}(\hat{\bx}(t),\bu(t)),\quad  \hat{\bx}(0)=\hat{\bx}_0,\\
    \by_r(t) &= \hat{h}(\hat{\bx}(t)),
    \end{aligned}
\end{equation} 
where $\hat{\bx}(t)\in\mathbb{R}^r$ represents the reduced state vector with $r<n$, $\bu(t)\in\mathbb{R}^m$ is the input, and $\by_r(t)\in\mathbb{R}^{p}$ is the ROM output.  The map $\hat{f}:\mathbb{R}^r\times \mathbb{R}^m\rightarrow\mathbb{R}^r$ governs the reduced state evolution, and $\hat{h}: \mathbb{R}^r \rightarrow \mathbb{R}^p$ defines the output map. Suppose the system in \eqref{eq:nonlinear_rom} is driven by the signal generator $(\mathscr{s},\mathscr{l})$, yielding the interconnected system:
\begin{equation}\label{eq:inter_mm}
    \begin{aligned}
    \dot{\w}(t) &= \mathscr{s}(\w(t)), \quad &\w(0) &= \w_0,\\
    \dot{\hat{\bx}}(t) &= \hat{f}(\hat{\bx}(t),\mathscr{l}(\w(t))),\quad  &\hat{\bx}(0) &= \hat{\bx}_0,\\
    \by_r(t) &= \hat{h}(\hat{\bx}(t)).
    \end{aligned}
\end{equation}
This interconnected system possesses its own center manifold, denoted by $\boldsymbol{\pi}_r(\w)$, which is obtained by solving the corresponding invariance equation in Definition~\ref{def:invariance_eqn}. The nonlinear moment of the reduced system at $(\mathscr{s},\mathscr{l})$ is then given as
\begin{equation}
    \widehat{\mathcal{M}}_{\mathscr{s},\mathscr{l}}(\w) = \hat{h}\circ\boldsymbol{\pi}_r(\w).
\end{equation}

\begin{defi}[Nonlinear moment matching \cite{astolfi2010}]
 The system in \eqref{eq:nonlinear_rom} achieves nonlinear moment matching of the full-order system \eqref{eq:fom_nonlinear} at $(\mathscr{s},\mathscr{l})$ if, for all $\w$ in a neighborhood of the origin, the moments coincide, i.e.,
\begin{equation}\label{eq:mm_general}
    \widehat{\mathcal{M}}_{\mathscr{s},\mathscr{l}}(\w) = \hat{h}\circ\boldsymbol{\pi}_r(\w) = h\circ \boldsymbol{\pi}(\w) = \mathcal{M}_{\mathscr{s},\mathscr{l}}(\w).
\end{equation} 
{Additionally, the system in \eqref{eq:nonlinear_rom} is formally considered a reduced-order model of \eqref{eq:fom_nonlinear} if $r < n$.}
\end{defi}

In \cite{astolfi2010}, the author introduce a parameterized family of reduced models that inherently match the moments of \eqref{eq:fom_nonlinear} at $(\mathscr{s},\mathscr{l})$. This family is defined by
\begin{equation}
    \begin{aligned}
    \dot{\bx}_r(t) &= \mathscr{s}(\bx_r(t)) - \delta(\bx_r(t))\mathscr{l}(\bx_r(t)) + \delta(\bx_r(t))\bu(t), \\
    \by_r(t) &= h(\boldsymbol{\pi}(\bx_r(t))),
    \end{aligned}
\end{equation}
where $\bx_r \in \mathbb{R}^r$ is the reduced state vector, and $\delta(\cdot)$ is an arbitrary mapping chosen such that the associated invariance equation,
\begin{equation} \label{eq:astolfi_rom}
    \frac{\partial p(\w)}{\partial \w}\mathscr{s}(\w) = \mathscr{s}(p(\w)) - \delta(p(\w))\mathscr{l}(p(\w)) + \delta(p(\w))\mathscr{l}(\w),
\end{equation}
admits the unique solution $p(\w) = \w$. Satisfying this condition guarantees that the center manifold of the reduced system collapses to the identity map ($\boldsymbol{\pi}_r(\w) = \w$). Consequently, the reduced model evaluates the exact full-order output map along the FOM's center manifold, ensuring that the ROM perfectly replicates the steady-state output response of the high-dimensional plant for the prescribed class of inputs.

\begin{rem}
    {For a given FOM and chosen signal generator, there exist an infinite number of reduced systems that satisfy the moment-matching condition. Furthermore, the FOM and ROM are not required to share the same structural form; a linear FOM may be reduced to a nonlinear ROM, and vice versa. The fundamental requirement is simply that the ROM dimension is strictly smaller than the FOM dimension ($r \ll n$) while fulfilling the exact moment-matching property in \eqref{eq:mm_general}. In this paper, we present a method of constructing a quadratic ROM that interpolates \emph{the nonlinear moments} of a linear (LTI) full order system.}
\end{rem}

\section{Defining the quadratic reduced system}\label{sec:qm_rom}

We consider the problem of constructing reduced systems for linear time-invariant (LTI) systems of the form \begin{equation}\label{eq:lti_fom}
    \begin{aligned}
    \dot\bx(t
    )&=\bA\bx(t
    )+ \bB\bu(t),\quad  \bx(0)=\bx_0\in\mathbb R^n\\
    \by(t)&=\bC\bx(t),
\end{aligned}
\end{equation} where $\bx(t)\in\mathbb R^n$ represents the state, $\bu(t)\in\mathbb R^{m}$ represents the input and $\by(t)\in\mathbb R^p$ represents the output and the state matrices are $\bA\in\mathbb R^{n\times n},\bB \in \mathbb R^{n\times m}$ and $\bC\in\mathbb R^{p\times n}$. In this paper, we focus on quadratic approximations, where the state $\mathbf{x}(t)\in\mathbb R^n$ is approximated as
\begin{equation} \label{eq:quad_approx}
\mathbf{x}(t) \approx \boldsymbol{\Phi}(\mathbf{x}_r(t)) = \mathbf{V}_1 \mathbf{x}_r(t) + \mathbf{V}_2(\mathbf{x}_r(t) \otimes \mathbf{x}_r(t)), 
\end{equation}
where $\mathbf{x}_r(t) \in \mathbb{R}^r$ is the reduced state with $r \ll n$. The matrices $\mathbf{V}_1 \in \mathbb{R}^{n \times r}$ and $\mathbf{V}_2 \in \mathbb{R}^{n \times r^2}$ define the linear and quadratic components of the manifold, respectively. Substituting the quadratic approximation \eqref{eq:quad_approx} into the linear FOM \eqref{eq:lti_fom} yields the  residual equation 
\begin{align*}
    \mathbf{V}_1 \frac{d\mathbf{x}_r}{dt} + \mathbf{V}_2 \frac{d}{dt}(\mathbf{x}_r(t) \otimes \mathbf{x}_r(t)) = \mathbf{A} \mathbf{V}_1 \mathbf{x}_r(t) + \mathbf{A} \mathbf{V}_2 (\mathbf{x}_r(t) \otimes \mathbf{x}_r(t)) + \mathbf{B} \mathbf{u}(t) +\mathbf r(t),
\end{align*}
where $\mathbf r(t)\in\mathbb R^n$ is the residual vector. Within the framework of classical interpolatory MOR, we introduce a test projection matrix $\bW\in\mathbb R^{n\times r}$  such that $\bW^\top\bV_1$ is invertible and $\mathbf{W}^\top \mathbf{r}(t) = \mathbf{0}$. In other words, we solve the FOM along the columns of $\bW$. Thus, projecting the residual equation with $\bW^\top$ from the left yields
\begin{align*}
    \mathbf{W}^\top \mathbf{V}_1 \frac{d\mathbf{x}_r}{dt} + \mathbf{W}^\top \mathbf{V}_2 \frac{d}{dt}(\mathbf{x}_r \otimes \mathbf{x}_r) &= \mathbf{W}^\top \mathbf{A} \mathbf{V}_1 \mathbf{x}_r + \mathbf{W}^\top \mathbf{A} \mathbf{V}_2 (\mathbf{x}_r \otimes \mathbf{x}_r) + \mathbf{W}^\top \mathbf{B} \mathbf{u}.
\end{align*}
This formulation results in a reduced system given by
\begin{subequations}\label{eq:rom_state}
\begin{align}
   (\bI_r+\bE_r(\mathbf{x}_r\oplus \bx_r) )\frac{d\mathbf{x}_r(t)}{dt} &= \mathbf{A}_r \mathbf{x}_r(t) + \mathbf{H}_r (\mathbf{x}_r(t) \otimes \mathbf{x}_r(t)) + \mathbf{B}_r \mathbf{u}(t) \\
    \mathbf{y}_r(t) &= \widehat h(\bx_r)=\mathbf{C}_r \mathbf{x}_r(t) + \mathbf{K}_r (\mathbf{x}_r(t) \otimes \mathbf{x}_r(t)), 
\end{align}
\end{subequations}
where the reduced matrices and the state-dependent reduced mass matrix term denoted by $\mathbf{M}_r(\mathbf{x}_r)$ are defined as
\begin{subequations}\label{eq:rom_group}
\begin{align}
    \mathbf{A}_r &= (\mathbf{W}^\top \mathbf{V}_1)^{-1}\mathbf{W}^\top \mathbf{A} \mathbf{V}_1, \quad \mathbf{B}_r = (\mathbf{W}^\top \mathbf{V}_1)^{-1}\mathbf{W}^\top \mathbf{B}, \quad \mathbf{C}_r = \mathbf{C} \mathbf{V}_1 \quad \mathbf{H}_r = (\mathbf{W}^\top \mathbf{V}_1)^{-1}\mathbf{W}^\top \mathbf{A} \mathbf{V}_2,\\
     \mathbf{E}_r &= (\mathbf{W}^\top \mathbf{V}_1)^{-1}\mathbf{W}^\top\mathbf{V}_2, \qquad \mathbf{M}_r(\mathbf{x}_r) = \bI_r + \bE_r (\mathbf{I}_r \otimes \mathbf{x}_r + \mathbf{x}_r \otimes \mathbf{I}_r) \quad \mathbf{K}_r = \mathbf{C} \mathbf{V}_2.\label{eq:rom_b}
\end{align}
\end{subequations}

In the proposed framework of this paper, the choice of the left projection basis $\bW$ is uncoupled and independent of the choice of $\bV_1,\bV_2$. This flexibility allows one to tailor the algebraic properties of the reduced-order model to specific computational requirements. For example, by enforcing the orthogonality condition, $\mathbf{W}^\top \mathbf{V}_2 = \mathbf{0}$ (like in \cite{geelen2023operator,QM_framework} for example), $\mathbf{M}_r$ simplifies to the identity matrix $\bI_r$. This is particularly advantageous as it simplifies the reduced dynamics, ensuring that the resulting ROM avoids state dependent terms multiplying the derivative of the reduced state. In our framework, we consider the more general case of where $\bW^\top\bV_2\neq 0$, while naturally retaining both the orthogonalized formulation and the classical Galerkin projection $\bW=\bV_1$ as special cases. We note that the mass-matrix term like in~\eqref{eq:rom_state} also appears in the context of nonlinear decoder approximation in \cite{benner2023quadratic}.\\ 

\noindent\fbox{%
    \parbox{\dimexpr\linewidth-2\fboxsep-2\fboxrule}{%
        In the next section, we provide a system-theoretic way of choosing these projection matrices $\bV_1,\bV_2$ and $\bW$ such that the ROM defined in \eqref{eq:rom_state}--\eqref{eq:rom_group} preserves the nonlinear moments of the FOM defined in \eqref{eq:lti_fom}.
    }%
}\\

\section{Constructing interpolatory ROM using quadratic approximations} \label{sec:theoretical}

This section establishes the theoretical foundations for constructing an interpolating reduced-order model (ROM) that achieves moment matching over a prescribed signal space. We begin by characterizing the center manifold obtained as the solution to the invariance equation under a specialized class of signal generators. Specifically, Theorem~\ref{thm:quad_pi} demonstrates that this center manifold is quadratic, providing theoretical motivation for the choice of the quadratic projection matrices $\bV_1$ and $\bV_2$. These developments culminate in  Theorem~\ref{thm:moment_matching}, the primary theoretical result of this work, which states that the ROM defined in \eqref{eq:rom_state} and \eqref{eq:rom_group} matches the moments of the full-order system in the sense of \cite{astolfi2010}, given an appropriate selection of $\bV_1$, $\bV_2$, and $\bW$. To facilitate readability, the proof of this main result is structured into two sequential steps. First, Theorem~\ref{thm:identity_manifold} establishes that the ROM constructed with these specific projection matrices guarantees that its own center manifold reduces to the identity map. Second, the overarching moment-matching result is achieved by combining the quadratic center manifold property from Theorem~\ref{thm:quad_pi} with the identity mapping property from Theorem~\ref{thm:identity_manifold}.

\subsection{Quadratic Center Manifolds for Model Reduction} 
\label{subsec:quad_center_manifolds}

We start by introducing an autonomous signal generator whose state $\omega(t) \in \mathbb{R}^r$ evolves linearly, while its output $\mathbf{u}(t)$ incorporates both linear and quadratic terms. Let $\omega_0 = \omega(0) \in \mathbb{R}^r$ denote the initial state of the generator. Suppose the signal generator dynamics are chosen as
\begin{equation}\label{eq:quad_signal_gen}
    \begin{aligned}
    \dot{\omega}(t) &= \mathbf{S}\omega(t),\quad\quad \w(0)=\w_0, \\
    \mathbf{u}(t) &= \mathbf{L}(\omega(t)) = \mathbf{L}_1 \omega(t) + \mathbf{L}_2 \omega^{(2)}(t), 
    \end{aligned}
\end{equation}
where $\mathbf{S}\in\mathbb{R}^{r\times r}$ is a full-rank matrix, $\mathbf{L}_1\in\mathbb{R}^{m\times r}$ and  $\mathbf{L}_2\in\mathbb{R}^{m\times r^2}$. Consequently, the explicit, time-dependent input that will be fed into the FOM system  \eqref{eq:lti_fom} is given by
\begin{equation}\label{eq:input_explicit}
    \mathbf{u}(t) = \mathbf{L}_1 e^{\mathbf{S}t}\omega_0 + \mathbf{L}_2 \left(e^{\mathbf{S}t} \otimes e^{\mathbf{S}t}\right)(\omega_0\otimes\omega_0).
\end{equation}
To provide intuition for the upcoming formulation, we note that selecting the signal generator parameters $\bS$, $\bL_1$, and $\bL_2$ in \eqref{eq:quad_signal_gen} is functionally equivalent to choosing the interpolation frequencies and tangential directions, much like discussed in Section~\ref{subsec:linear_moment}. Consequently, the quadratic ROM constructed later in this section explicitly \emph{interpolates} the exact nonlinear moments of the full-order system at these targeted frequencies and directions. We rigorously formalize this connection between choice of signal generators and nonlinear moment interpolation and provide canonical choices for selecting these parameters in Section~\ref{sec:signal_space}. Assuming an interconnection of  this signal space \eqref{eq:quad_signal_gen} with the LTI system from \eqref{eq:lti_fom}, we obtain
\begin{equation}\label{eq:interconnection_quad_lti}
    \begin{aligned}
    \dot \w(t) &= \bS\w(t), \quad \w(0)=\w_0,\\
    \dot\bx(t
    )&=\bA\bx(t
    )+ \bB(\bL_1\w(t) + \bL_2 (\w(t)\otimes \w(t))),\quad  \bx(0)=\bx_0\in\mathbb R^n,\\
    \by(t)&=\bC\bx(t).
\end{aligned}
\end{equation}

This interconnected system admits a center invariant manifold which, as discussed in Section~\ref{sec:background}, is given as the solution of the invariance equation. For this interconnected system,  the invariance equation has the form
\begin{equation}\label{eq:invariance}
    \frac{\partial \Pi(\omega)}{\partial \omega} \mathbf{S}\omega = \mathbf{A} \Pi(\omega) + \mathbf{B}\left(\mathbf{L}_1\omega +\mathbf{L}_2\omega^{(2)}\right).
\end{equation}

In the following theorem, we show that for a signal generator of the form \eqref{eq:quad_signal_gen}, the solution of the invariance equation \eqref{eq:invariance} is a quadratic manifold. The matrices that define this quadratic center manifold mapping are a natural choice for the projection matrices $\mathbf{V}_1$ and $\mathbf{V}_2$ used to construct the ROM matrices in \eqref{eq:rom_group}.

\begin{thm} \label{thm:quad_pi}
    Suppose $\bS\in\mathbb R^{r\times r}$ is chosen such that the eigenvalues of $\bA$ are disjoint from those of $\bS$ and its Kronecker sums ($\bS\oplus_k \bS$) for $k\geq 2$. Then the unique, analytic solution to the invariance equation \eqref{eq:invariance} is given by \begin{align*}
        \mathbf \Pi(\w)= \mathbf \Pi_1\w + \mathbf \Pi_2 \w^{(2)},
    \end{align*} where $\mathbf \Pi_1\in\mathbb R^{n\times r}$ and $\mathbf \Pi_2\in\mathbb{R}^{n\times r^2}$ are constant matrices given as the solution of the Sylvester equations \begin{align}
        \boldsymbol{\Pi}_1 \mathbf{S} - \mathbf{A}\boldsymbol{\Pi}_1 &= \mathbf{B}\mathbf{L}_1, \label{eq:pi1_sylvester} \\
    \boldsymbol{\Pi}_2 (\mathbf{S} \oplus \mathbf{S}) - \mathbf{A}\boldsymbol{\Pi}_2 &= \mathbf{B}\mathbf{L}_2. \label{eq:pi2_sylvester}
    \end{align}
\end{thm}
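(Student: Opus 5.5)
The plan is to expand an arbitrary analytic solution of \eqref{eq:invariance} as a power series in $\w$ and match terms degree by degree. Write
\begin{equation*}
\p(\w)=\sum_{k\geq 1}\p_k\,\w^{(k)},\qquad \p_k\in\mathbb R^{n\times r^k}.
\end{equation*}
There is no constant term because $\p(0)=0$. The first step is to compute the left-hand side of \eqref{eq:invariance} on a monomial $\p_k\w^{(k)}$. Using the product rule on the $k$-fold Kronecker product, we get
\begin{equation*}
\frac{\partial (\w^{(k)})}{\partial \w}\bS\w=\sum_{j=1}^{k}\w^{(j-1)}\otimes \bS\w\otimes \w^{(k-j)}.
\end{equation*}
By the mixed-product property \eqref{eq:kron_mixed_product}, each summand equals $(\bI_r^{(j-1)}\otimes\bS\otimes\bI_r^{(k-j)})\w^{(k)}$. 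Summing and invoking \eqref{def:kron_sum} then gives exactly $(\bS\oplus_k\bS)\,\w^{(k)}$. Hence the left-hand side of \eqref{eq:invariance} equals $\sum_k \p_k(\bS\oplus_k\bS)\w^{(k)}$, and this series is homogeneous of degree $k$ in each term.

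Next I equate homogeneous components of degree $k$ on both sides. The forcing term $\bB(\bL_1\w+\bL_2\w^{(2)})$ contributes only at degrees $1$ and $2$. This yields \eqref{eq:pi1_sylvester} at $k=1$ (with $\bS\oplus_1\bS=\bS$) and \eqref{eq:pi2_sylvester} at $k=2$. For every $k\geq 3$ it yields the homogeneous equation $\p_k(\bS\oplus_k\bS)-\bA\p_k=0$.

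There is one subtlety: the representation $\p_k\w^{(k)}$ is not unique, since $\w^{(k)}$ has repeated (symmetric) entries. So I need to argue at the level of the polynomial maps rather than the coefficient matrices. I would handle this by replacing each $\p_k$ with its symmetrization $\p_k\mathcal S_k$, where $\mathcal S_k$ is the projector onto symmetric tensors. The map $\p_k\w^{(k)}$ is unchanged, and $\mathcal S_k$ commutes with $\bS\oplus_k\bS$. Equivalently, the operator $X\mapsto X(\bS\oplus_k\bS)-\bA X$ preserves the subspace of coefficient matrices annihilating antisymmetric parts. With this in hand, equality of the homogeneous polynomial maps gives the equations above on the symmetrized coefficients. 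I expect this bookkeeping to be the main place needing care.

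Finally, I apply the standard Sylvester solvability criterion: $X\mathbf{M}-\bA X=\mathbf{F}$ has a unique solution iff $\sigma(\bA)\cap\sigma(\mathbf M)=\emptyset$. The hypothesis gives this for $\mathbf M=\bS$ and $\mathbf M=\bS\oplus_k\bS$ for all $k\geq 2$. So \eqref{eq:pi1_sylvester} and \eqref{eq:pi2_sylvester} have unique solutions $\p_1,\p_2$, and for $k\geq3$ the homogeneous equation forces $\p_k\w^{(k)}\equiv 0$. Therefore every analytic solution equals $\p_1\w+\p_2\w^{(2)}$. Conversely, this quadratic map satisfies \eqref{eq:invariance} by reversing the computation, since its left-hand side is $\p_1\bS\w+\p_2(\bS\oplus\bS)\w^{(2)}$; this gives existence and uniqueness.
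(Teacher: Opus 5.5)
Your proposal is correct and follows the same route as the paper: expand $\p(\w)=\sum_k \p_k\w^{(k)}$, match homogeneous degrees to get the Sylvester equations with $\bS\oplus_k\bS$, and use spectral disjointness to get uniqueness and force $\p_k\w^{(k)}\equiv 0$ for $k\geq 3$. Your symmetrization bookkeeping and the explicit converse (existence) check are careful refinements that the paper glosses over. Strictly, after symmetrizing, the degree-2 right-hand side becomes $\bB\bL_2\mathcal{S}_2$ rather than $\bB\bL_2$, but your uniqueness-plus-existence argument already absorbs this.
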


\begin{proof}
    The center manifold under the signal generator is given as the solution of the following invariance equation \begin{align*} 
    \frac{\partial \p(\w)}{\partial \omega}=\bA \Pi(\w) + \bB(\bL_1\w +\bL_2\w^{(2)}).
\end{align*}
We follow an idea similar to the derivation of Volterra series expansion for structured dynamical systems in \cite{rugh1981nonlinear} and the power series approach to solving (regulator) invariance equations in \cite{huang2004nonlinear,bai2022model}. Suppose the center manifold $\boldsymbol{\mathbf \Pi}(\omega)$ of the FOM \eqref{eq:lti_fom} under signal generated by \eqref{eq:quad_signal_gen} is given by the power series expansion (around $0$), i.e.,
\begin{equation} \label{eq:power_series_fom}
    \boldsymbol{\Pi}(\omega) = \sum_{i=1}^\infty \boldsymbol{\Pi}_i \omega^{(i)} = \boldsymbol{\Pi}_1 \omega + \boldsymbol{\Pi}_2 \omega^{(2)} + \boldsymbol{\Pi}_3 \omega^{(3)} + \dots .
\end{equation}
Substituting the power series expansion \eqref{eq:power_series_fom} into the invariance equation \eqref{eq:invariance} yields
\begin{equation}
    \sum_{i=1}^\infty \frac{\partial(\boldsymbol{\Pi}_i \omega^{(i)})}{\partial \omega} \mathbf{S}\omega = \mathbf{A} \sum_{i=1}^\infty \boldsymbol{\Pi}_i \omega^{(i)} + \mathbf{B}\mathbf{L}_1\omega + \mathbf{B}\mathbf{L}_2 \omega^{(2)}.
\end{equation}
By collecting the terms corresponding to each degree of $\omega^{(i)}$, we derive a sequence of Sylvester equations
\begin{align}
    \boldsymbol{\Pi}_1 \mathbf{S} - \mathbf{A}\boldsymbol{\Pi}_1 &= \mathbf{B}\mathbf{L}_1, \\
    \boldsymbol{\Pi}_2 (\mathbf{S} \oplus \mathbf{S}) - \mathbf{A}\boldsymbol{\Pi}_2 &= \mathbf{B}\mathbf{L}_2,  \\
    \boldsymbol{\Pi}_k (\mathbf{S} \oplus_k \bS) - \mathbf{A}\boldsymbol{\Pi}_k &= \mathbf{0}, \quad \text{for } k \geq 3, \label{eq:sylvester_k}
\end{align}
where $\oplus_k$ denotes the $k$ Kronecker sum. From the theory of Sylvester equations  (see, for example, \cite{ACA05}), for $k \geq 3$, $\boldsymbol{\Pi}_k = \mathbf{0}$ is the unique solution of the Sylvester equation if the eigenvalues of $\mathbf{A}$ are disjoint from the eigenvalues of the $k$-fold Kronecker sum of $\mathbf{S}$. Thus, by the hypothesis of the theorem, we have $\boldsymbol{\Pi}_k = \mathbf{0}$ as the unique solution to \eqref{eq:sylvester_k} for $k\geq3$. Thus, the center manifold for the FOM under the signal generator in \eqref{eq:quad_signal_gen} is given by \begin{equation*}
    \mathbf \Pi(\w)= \mathbf \Pi_1\w+\mathbf \Pi_2\w^{(2)},
\end{equation*} where $\mathbf \Pi_1\in\mathbb R^{n\times r}$ and $\mathbf \Pi_2\in\mathbb{R}^{n\times r^2}$ are constant matrices given as the solution of the Sylvester equations \eqref{eq:pi1_sylvester} and \eqref{eq:pi2_sylvester} respectively. \end{proof}

 The statement of Theorem \ref{thm:quad_pi} provides a strong motivation for the use of quadratic manifolds to approximate the FOM state since the center manifold, obtained as the solution of the nonlinear-invariance equation in \eqref{eq:invariance}, is quadratic (see Fig.~\ref{fig:quadratic_manifold}) in $\w$ and hence can be reduced to a $r$ dimensional reduced space using a linear and quadratic projection matrix. Furthermore, the theorem also provides a natural choice for the linear and quadratic projection bases in the quadratic approximation:  
\begin{equation} \label{eq:choice_V12}
    \mathbf{V}_1 := \boldsymbol{\Pi}_1 \in \mathbb{R}^{n \times r}, \quad \mathbf{V}_2 := \boldsymbol{\Pi}_2 \in \mathbb{R}^{n \times r^2}.
\end{equation} {Because these matrices are obtained by directly solving linear Sylvester equations, our approach provides an optimization-free, closed form expressions for $\bV_1$ and $\bV_2$. The QM ROM \eqref{eq:rom_state}--\eqref{eq:rom_group} constructed using $\bV_1$ and $\bV_2$ achieves nonlinear moment matching as shown in the following subsections (see Theorem~\ref{thm:moment_matching}). This construction also provides a significant practical advantage: it isolates the left projection matrix $\bW$ as a completely free design parameter, constrained only by the mild requirement that $\bW^\top\bV_1$ remains invertible. By completely decoupling the moment-matching conditions from the choice of $\bW$, our framework allows flexibility to enforce secondary numerical or physical properties (such as stability or structure preservation) without affecting the moment matching property. This can be considered analogous to the one-sided vs two-sided interpolatory projections in the linear projection-based interpolatory MOR; see, e.g.,
\cite[Thm. 3.3.1]{morAntBG20}. In the subsequent theorems, we make no additional assumptions on $\bW$ to preserve this generality. While existing literature typically relies on standard choices, such as enforcing a Galerkin-like projection ($\bW=\bV_1$) or orthogonalizing against the quadratic basis ($\bW^\top\bV_2=\mathbf{0}$), our framework automatically accommodates these choices of $\bW$.}

\begin{rem}
The assumption in Theorem~\ref{thm:quad_pi} (which carries over to Theorems~\ref{thm:identity_manifold} and~\ref{thm:moment_matching}) that the eigenvalues of $\bA$ are disjoint from those of $\bS$ and its Kronecker sums $\bS \oplus_k \bS$ has two key implications. Mathematically, it guarantees unique solutions to the Sylvester equations, ensuring that the invariant quadratic center manifold is uniquely defined. From a system-theoretic perspective, it prevents placing interpolation frequencies at the poles of the transfer function of the original LTI system, thereby ensuring well-posed steady-state dynamics.
\end{rem}

\begin{figure}
    \centering
    \begin{tikzpicture}

\begin{axis}[
        axis lines=none,
        width=10cm, height=7cm, 
        view={0}{15}, 
        clip=false 
    ]
    
    \addplot3[
        surf, 
        color=yellow,          
        faceted color=orange,  
        domain=-2:1.5, domain y=0:1
    ] {(x^2+y^2)};

    \addplot3[
        black, 
        thick, 
        samples=50, 
        domain=-1.5:1,   
        y domain=0:0,  
        samples y=1    
    ] (x,1/2,{1/4+x^2});

    \addplot3[
        blue, 
        thick, 
        samples=50, 
        domain=-1.5:1,
        y domain=0:0, 
        samples y=1   
    ] (x,1/2,{0.001*exp(1.5*(2-2*x)) + 1/4+x^2});

    \node at (axis cs: -0.25, 0.5, -0.2) [ font=\small] {\color{black} $\mathbf x_{\text{ss}}(t)$};

    \node at (axis cs: -1.1, 0.5, 4.5) [anchor=east, font=\small, blue] {$\mathbf x(t)$};

    \node at (axis cs: 0, 0.5, 5.0) [
        draw=orange!40!white, 
        fill=white, 
        inner sep=6pt, 
        align=left, 
        line width=1pt
    ] {
        \footnotesize \color{black} Center Manifold :  \\
        \small \color{orange!80!black} $ \mathbf \Pi(\omega)=\mathbf \Pi_1\omega + \mathbf \Pi_2\omega^{(2)}$ 
        \small \color{orange!80!black} 
    };

    \end{axis}
\end{tikzpicture}
    \caption{Schematic showing the quadratic center manifold of the LTI system \eqref{eq:lti_fom}. An arbitrary trajecteory $\bx(t)$ converges to its corresponding steady state $\bx_{ss}(t)$ trajectory on the center manifold.}
    \label{fig:quadratic_manifold}
\end{figure}

\subsection{Theoretical interpolation guarantees}
Based on Theorem~\ref{thm:quad_pi}, we construct the reduced system  \eqref{eq:rom_state}-\eqref{eq:rom_group} using projection matrices $\mathbf{V}_1$ and $\mathbf{V}_2$ defined in \eqref{eq:choice_V12} and $\bW$  chosen such that $\bW^\top\bV_1$ is invertible. In this section we provide the theoretical guarantees that the resulting reduced-order model (ROM) preserves the nonlinear moments of the full-order model (FOM). We first demonstrate that the center manifold of the ROM constructed using $\bV_1$ and $\bV_2$ as in \eqref{eq:choice_V12} maps to the center manifold of the FOM, thereby ensuring nonlinear moment matching along $\bW^\top$. We denote the nonlinear moments of the system as $\boldsymbol{\mathcal{M}}(\omega)$ and suppress the dependence on $\bS,\bL_1,\bL_2$ to avoid notational clutter. For the reader's convenience, we recall the equations of the ROM \eqref{eq:rom_state} interconnected with the signal generator from \eqref{eq:quad_signal_gen} given by  \begin{subequations}
\begin{align*}
    \dot{\w}(t)&=\bS\w(t),\quad \w(0)=\w_0\in\mathbb R^r\\
   \bM(\bx_r) \frac{d\mathbf{x}_r(t)}{dt} &= \mathbf{A}_r \mathbf{x}_r(t) + \mathbf{H}_r (\mathbf{x}_r(t) \otimes \mathbf{x}_r(t)) + \mathbf{B}_r \left(\bL_1\w(t)+ \bL_2(\w(t)\otimes \w(t))\right), \quad \bx_r(0)\in\mathbb R^r\\
    \mathbf{y}_r(t) &= \widehat h(\bx_r)=\mathbf{C}_r \mathbf{x}_r(t) + \mathbf{K}_r (\mathbf{x}_r(t) \otimes \mathbf{x}_r(t)), 
\end{align*}
\end{subequations} with the matrices of the ROM constructed as in \eqref{eq:rom_group} with the choice $\bV_1=\boldsymbol\Pi_1$ and $\bV_2=\boldsymbol\Pi_2$ as shown in \eqref{eq:choice_V12}. The reduced-order model (ROM) naturally has a mass-matrix term $\bM_r(\bx_r)$ multiplying the time derivative. To incorporate this into the standard nonlinear moment matching framework defined in \eqref{def:invariance_eqn}, we recast the system into a form without the mass-matrix term. By our specific choice of the projection matrix $\mathbf{W}$, the product $\mathbf{W}^\top\mathbf{V}_1$ is nonsingular. Consequently, the reduced mass matrix $\mathbf{M}_r(\mathbf{x}_r)$ is invertible at the equilibrium point $\mathbf{x}_r=0$. By continuity, there exists a neighborhood around the origin where $\mathbf{M}_r(\mathbf{x}_r)$ remains invertible. This local invertibility allows us to formulate the invariance equation for the ROM as
\begin{align}
    \frac{\partial \boldsymbol{\pi}_r}{\partial \omega} \mathbf{S}\omega &= (\mathbf{M}_r(\boldsymbol{\pi}_r(\w)))^{-1}\left(\mathbf{A}_r \boldsymbol{\pi}_r(\omega) + \mathbf{H}_r (\boldsymbol{\pi}_r(\omega) \otimes \boldsymbol{\pi}_r(\omega)) + \mathbf{B}_r (\mathbf{L}_1\omega + \mathbf{L}_2\omega^{(2)})\right) \notag \\
    \implies \mathbf{M}_r(\boldsymbol{\pi}_r(\w))\frac{\partial \boldsymbol{\pi}_r}{\partial \omega} \mathbf{S}\omega &= \mathbf{A}_r \boldsymbol{\pi}_r(\omega) + \mathbf{H}_r (\boldsymbol{\pi}_r(\omega) \otimes \boldsymbol{\pi}_r(\omega)) + \mathbf{B}_r (\mathbf{L}_1\omega + \mathbf{L}_2\omega^{(2)}). \label{eq:rom_invariance}
\end{align}
Thus, we adopt the implicit formulation in \eqref{eq:rom_invariance} as the defining invariance equation for the ROM in the subsequent results. Additionally using \eqref{eq:nonlinear_moment_def} and the definition of the reduced system in \eqref{eq:rom_state}, we have that the moment of the ROM for $\bS,\bL_1$ and $\bL_2$ is given by \begin{equation}
    \widehat{\mathcal M}(\w)=\widehat h\circ\boldsymbol \pi_r(\w)=\bC_r\boldsymbol \pi_r(\w) + \bK_r(\boldsymbol \pi_r(\w)\otimes \boldsymbol \pi_r(\w)).
\end{equation}

\begin{thm}\label{thm:identity_manifold}
Consider the ROM defined in \eqref{eq:rom_state}--\eqref{eq:rom_group} with $\mathbf{V}_1, \mathbf{V}_2$ chosen as the solution to \eqref{eq:pi1_sylvester}--\eqref{eq:pi2_sylvester} and let $\bW$ be chosen such that $\mathbf{W}^\top \mathbf{V}_1$ is invertible. Let $\boldsymbol{\pi}_r(\omega)$ be the solution to the ROM invariance equation \eqref{eq:rom_invariance}, i.e.,
\begin{equation*} \bM(\boldsymbol{\pi}_r(\w))\frac{\partial \boldsymbol{\pi}_r}{\partial \omega} \mathbf{S}\omega = \mathbf{A}_r \boldsymbol{\pi}_r(\omega) + \mathbf{H}_r (\boldsymbol{\pi}_r(\omega) \otimes \boldsymbol{\pi}_r(\omega)) + \mathbf{B}_r (\mathbf{L}_1\omega + \mathbf{L}_2\omega^{(2)})
\end{equation*} in a neighbourhood of $0$. If the eigenvalues of $\mathbf{A}_r$ are disjoint from the eigenvalues of $\mathbf{S}$ and its Kronecker sums ($\bS\oplus_k \bS$) for $k\geq 2$, then the identity mapping \begin{equation}\label{eq:identity_manifold}
    \boldsymbol{\pi}_r(\omega) = \omega
\end{equation} is the unique solution to the invariance equation \eqref{eq:rom_invariance} in a neighbourhood of $0$. 
\end{thm}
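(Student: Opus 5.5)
The plan has two parts: first verify that $\boldsymbol{\pi}_r(\omega)=\omega$ satisfies \eqref{eq:rom_invariance}, then prove uniqueness by a power-series argument in the spirit of the proof of Theorem~\ref{thm:quad_pi}.

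\textbf{Existence.} Substitute $\boldsymbol{\pi}_r(\omega)=\omega$, so that $\partial\boldsymbol{\pi}_r/\partial\omega=\bI_r$. Multiply \eqref{eq:rom_invariance} on the left by $\bW^\top\bV_1$ and use \eqref{eq:rom_group}. The left-hand side becomes
\[
\bW^\top\bV_1\bS\omega+\bW^\top\bV_2(\omega\otimes\bI_r+\bI_r\otimes\omega)\bS\omega .
\]
By the mixed-product property \eqref{eq:kron_mixed_product}, $(\omega\otimes\bI_r+\bI_r\otimes\omega)\bS\omega=(\bS\oplus\bS)(\omega\otimes\omega)$. Hence the left-hand side equals
\[
\bW^\top\big(\bV_1\bS\omega+\bV_2(\bS\oplus\bS)\omega^{(2)}\big).
\]
The right-hand side equals $\bW^\top\big(\bA\bV_1\omega+\bA\bV_2\omega^{(2)}+\bB\bL_1\omega+\bB\bL_2\omega^{(2)}\big)$. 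Since $\bV_1=\boldsymbol\Pi_1$ and $\bV_2=\boldsymbol\Pi_2$ solve \eqref{eq:pi1_sylvester}--\eqref{eq:pi2_sylvester}, the difference of the two sides is $\bW^\top$ applied to zero. So the identity map is a solution, for any $\bW$ with $\bW^\top\bV_1$ invertible. One should check the ordering convention for the derivative of $\omega\otimes\omega$ against the definition of $\bM_r$; both terms appear in $\bM_r$, so the sum is symmetric and the convention does not matter.

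\textbf{Uniqueness.} Expand an arbitrary analytic solution as $\boldsymbol{\pi}_r(\omega)=\sum_{i\ge1}\mathbf{P}_i\omega^{(i)}$ with $\boldsymbol{\pi}_r(0)=0$, substitute it into \eqref{eq:rom_invariance} (left-multiplied by $\bW^\top\bV_1$ to clear inverses), and match coefficients degree by degree.

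At degree one, $\bM_r(0)=\bI_r$ gives $\mathbf{P}_1\bS-\bA_r\mathbf{P}_1=\bB_r\bL_1$. Applying $(\bW^\top\bV_1)^{-1}\bW^\top$ to \eqref{eq:pi1_sylvester} shows that $\bI_r$ solves this equation. The spectral assumption $\sigma(\bA_r)\cap\sigma(\bS)=\emptyset$ then forces $\mathbf{P}_1=\bI_r$.

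At degree $k\ge2$, the equation has the form
\[
\mathbf{P}_k(\bS\oplus_k\bS)-\bA_r\mathbf{P}_k=\mathbf{R}_k(\mathbf{P}_1,\dots,\mathbf{P}_{k-1}),
\]
where $\mathbf{R}_k$ collects the contributions of $\bH_r$, $\bE_r$ and $\bB_r\bL_2$. The right-hand side involves only lower-order coefficients because the nonlinear terms are at least quadratic. To see this, I would rewrite the degree-$k$ part of $\mathbf{P}_j\,\partial(\omega^{(j)})/\partial\omega\,\bS\omega$ as $\mathbf{P}_j(\bS\oplus_j\bS)\omega^{(j)}$, using the Kronecker sum identity \eqref{def:kron_sum}.

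Proceed by induction with $\mathbf{P}_1=\bI_r$ and $\mathbf{P}_j=0$ for $2\le j<k$. Then $\mathbf{R}_k$ is exactly the degree-$k$ residual of the identity map, which vanishes by the existence step. So $\mathbf{P}_k=0$ solves the homogeneous Sylvester equation, and the assumption $\sigma(\bA_r)\cap\sigma(\bS\oplus_k\bS)=\emptyset$ makes it the unique solution.

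\textbf{Main obstacle.} The delicate step is the bookkeeping at each degree. One has to confirm that the only unknown coefficient entering degree $k$ is $\mathbf{P}_k$, and that it enters solely through the linear operator $\mathbf{P}_k\mapsto\mathbf{P}_k(\bS\oplus_k\bS)-\bA_r\mathbf{P}_k$. In particular, the mass-matrix term $\bE_r(\boldsymbol\pi_r\oplus\boldsymbol\pi_r)\,\partial\boldsymbol\pi_r/\partial\omega\,\bS\omega$ must contribute $\mathbf{P}_k$ only when multiplied by a factor of degree at least one. Because $\boldsymbol\pi_r$ and $\partial\boldsymbol\pi_r/\partial\omega\,\bS\omega$ both vanish at $0$, this holds. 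Uniqueness is asserted among analytic solutions near $0$, as in Theorem~\ref{thm:quad_pi}, and invertibility of $\bM_r$ near $0$ (argued before the theorem) legitimizes the implicit form \eqref{eq:rom_invariance}.
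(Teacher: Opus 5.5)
Your proposal is correct and follows essentially the same route as the paper: you expand $\boldsymbol{\pi}_r$ as a power series, derive the degree-by-degree Sylvester equations $\bP_k(\bS\oplus_k\bS)-\bA_r\bP_k=\cdots$, use the Sylvester equations for $\bV_1,\bV_2$ to show that $\bP_1=\bI_r$ and $\bP_2=0$ solve them, and invoke the spectral disjointness hypothesis for uniqueness. The only difference is packaging: you verify the identity solution directly via $(\omega\otimes\bI_r+\bI_r\otimes\omega)\bS\omega=(\bS\oplus\bS)\omega^{(2)}$, then use its vanishing residual to kill the right-hand sides for all $k\ge 2$ at once, which justifies a bit more cleanly the paper's bare assertion that the equations for $k\ge 3$ are homogeneous.
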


\begin{proof}
We follow a similar power series approach used in Theorem~\ref{thm:quad_pi}. Consider the invariance equation for the ROM given by
\begin{align*}\bM(\boldsymbol{\pi}_r)\frac{\partial \boldsymbol{\pi}_r}{\partial \omega} \mathbf{S}\omega = \mathbf{A}_r \boldsymbol{\pi}_r(\omega) + \mathbf{H}_r (\boldsymbol{\pi}_r(\omega) \otimes \boldsymbol{\pi}_r(\omega)) + \mathbf{B}_r (\mathbf{L}_1\omega + \mathbf{L}_2\omega^{(2)}).
\end{align*}
Consider the power series expansion for $\pr$ in a neighbourhood around 0 given as
\begin{align*}
    \pr(\w)&=\bP_1 \w+\bP_2\w^{(2)}+\bP_3\w^{(3)}+\dots = \sum_{i=1}^\infty \bP_i\w^{(i)}.
\end{align*} We will show that under the conditions stated in the theorem, $\pr(\w)=\w$ is the unique solution to the invariance equation, i.e., $\bP_1=\bI_r$ and $\bP_k=0$ for $k>1$. Using the power series expansion in the invariance equation for the ROM \eqref{eq:rom_invariance} and collecting the terms corresponding to $\w$, we have the Sylvester equation
\begin{align*}
     \bP_1 \bS &= \bA_r\bP_1 + \bB_r\bL_1\\
     \implies \bP_1\bS&=(\bW^\top\bV_1)^{-1}\bW^\top\bA\bV_1\bP_1 + (\bW^\top\bV_1)^{-1}\bW^\top\bB\bL_1.
\end{align*} 
    We have that $\bP_1=\bI_r$ is a solution to this equation since \begin{align*}
        \bW^\top (\bV_1 \bI_r\bS)&= \bW^\top (\bA\bV_1 + \bB\bL_1)
    \end{align*} from the definition of $\bV_1$ in ~\eqref{eq:pi1_sylvester}. By the hypothesis of the theorem, we have that eigenvalues of $\bA_r$ are disjoint from the eigenvalues of $\bS$ and hence this is the unique solution of the Sylvester equation. Similarly, collecting the terms corresponding to $\w^{(2)}$ in the invariance equation, we obtain
    \begin{align*}
        \bP_2(\bS\oplus \bS) +(\bW^\top \bV_1)^{-1} \bW^\top \bV_2\left((\bP_1\bS\otimes \bP_1)+(\bP_1\otimes\bP_1\bS)\right)= \bA_r\bP_2 + \bH_r(\bP_1\otimes\bP_1)+\bB_r\bL_2.
    \end{align*} Since we just established $\bP_1=\bI_r$, this simplifies to 
    \begin{align*}
        \bP_2(\bS\oplus \bS) +(\bW^\top \bV_1)^{-1} \bW^\top \bV_2\left(\bS\oplus \bS\right)= \bA_r\bP_2 + \bH_r+\bB_r\bL_2.
    \end{align*} Rearranging terms and substituting the expressions for the ROM matrices, we get
    \begin{align*}
        \bP_2(\bS\oplus \bS)-(\bW^\top \bV_1)^{-1}\bW^\top \bA \bV_1 \bP_2 & =(\bW^\top \bV_1)^{-1}\bW^\top \left(-\bV_2(\bS\oplus \bS)+  \bA\bV_2 +  \bB \bL_2\right)=0
    \end{align*} since $\bV_2$ satisfies the Sylvester equation in \eqref{eq:pi2_sylvester}. Thus, $\bP_2=0$ is the only solution of this Sylvester equation since the eigenvalues of $\mathbf{A}_r$ are disjoint from the eigenvalues of $\mathbf{S}\oplus \bS$. Similarly, collecting terms corresponding to $\w^{(k)}$ for $k\geq 3$ and substituting the expressions for $\bP_1,\dots,\bP_{k-1}$, gives us\begin{align*}
       \bP_k(\bS\oplus_k \bS) -\bA_r \bP_k=0,
    \end{align*} which has the unique solution $\bP_k=0$ under the hypothesis of the theorem. Thus, we conclude that $\pr(\w)=\w$ is the unique solution to \eqref{eq:rom_invariance} in a neighbourhood of 0.
\end{proof}

Beyond its role as a step toward proving the moment-matching result in Theorem~\ref{thm:moment_matching}, the preceding theorem provides geometric insights into the ROM center manifold. Eq.~\eqref{eq:identity_manifold} demonstrates that the ROM state directly and perfectly tracks the coordinates of the driving signal generator. This identity map simplifies the approximation of the full-order center manifold, leading directly to the following corollary. 

\begin{cor} \label{thm:along_W}
    Let $\boldsymbol{\pi_r}(\w)$ denote the solution of the ROM invariance equation \eqref{eq:rom_invariance}. Then the map $\bp(\w)$ defined as\begin{equation*}
  \bp(\w)=\bV_1\boldsymbol{\pi_r}(\w)+\bV_2(\boldsymbol{\pi_r}(\w)\otimes \boldsymbol{\pi_r}(\w))
    \end{equation*}
    solves the FOM invariance equation \eqref{eq:invariance} along direction $\bW^\top$.
\end{cor}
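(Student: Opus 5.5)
The plan is to show directly that substituting $\bp(\w)$ into the FOM invariance equation \eqref{eq:invariance} leaves a residual annihilated by $\bW^\top$. That is, we aim to show
\begin{equation*}
\bW^\top\Big(\frac{\partial \bp(\w)}{\partial \w}\bS\w - \bA\bp(\w) - \bB(\bL_1\w+\bL_2\w^{(2)})\Big)=\mathbf{0}
\end{equation*}
for all $\w$ in a neighbourhood of $0$. There are two natural routes. One invokes Theorem~\ref{thm:identity_manifold}: under its hypotheses $\pr(\w)=\w$, so $\bp(\w)=\bV_1\w+\bV_2\w^{(2)}=\p(\w)$ by \eqref{eq:choice_V12} and Theorem~\ref{thm:quad_pi}. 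Then the full residual, not just its $\bW^\top$-component, vanishes. I would prefer a route that does not rely on the explicit identity solution, since the corollary is stated for a general solution $\pr$ of \eqref{eq:rom_invariance}. Accordingly, I will do the chain-rule computation for an arbitrary $\pr$.

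First, I compute the Jacobian of $\bp$. By the chain rule and the product rule for Kronecker products,
\begin{equation*}
\frac{\partial \bp}{\partial \w}=\Big(\bV_1+\bV_2\big(\bI_r\otimes\pr+\pr\otimes\bI_r\big)\Big)\frac{\partial \pr}{\partial \w}.
\end{equation*}
Here I use $d(\bx_r\otimes\bx_r)=(d\bx_r)\otimes\bx_r+\bx_r\otimes d\bx_r=(\bI_r\otimes\bx_r+\bx_r\otimes\bI_r)\,d\bx_r$, which follows from the mixed-product property \eqref{eq:kron_mixed_product}. The matrix that should appear is exactly the mass matrix in \eqref{eq:rom_b}. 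Premultiplying by $(\bW^\top\bV_1)^{-1}\bW^\top$ gives
\begin{equation*}
(\bW^\top\bV_1)^{-1}\bW^\top\frac{\partial \bp}{\partial \w}=\bM_r(\pr(\w))\frac{\partial\pr}{\partial\w}.
\end{equation*}

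Next, I project the right-hand side. By the definitions of $\bA_r,\bH_r,\bB_r$ in \eqref{eq:rom_group},
\begin{equation*}
(\bW^\top\bV_1)^{-1}\bW^\top\big(\bA\bp+\bB\bL(\w)\big)=\bA_r\pr+\bH_r(\pr\otimes\pr)+\bB_r(\bL_1\w+\bL_2\w^{(2)}).
\end{equation*}
The projected FOM residual is therefore exactly the ROM invariance residual in \eqref{eq:rom_invariance}, which vanishes by assumption. Multiplying back by the invertible matrix $\bW^\top\bV_1$ yields the claim. I would add a remark that, combined with Theorem~\ref{thm:identity_manifold}, this projected statement upgrades to an exact one, $\bp=\p$.

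The main obstacle is making ``along direction $\bW^\top$'' precise and matching the Kronecker derivative with the paper's $\bx_r\oplus\bx_r$ notation in \eqref{eq:rom_state}. The shape $(\bx_r\oplus\bx_r)$ in \eqref{eq:rom_state} should be read as $\bI_r\otimes\bx_r+\bx_r\otimes\bI_r$, an $r^2\times r$ matrix, as in \eqref{eq:rom_b}. Once that is fixed, the rest is bookkeeping. The neighbourhood of $0$ is the one on which $\bM_r(\pr(\w))$ is invertible and $\pr$ is defined.
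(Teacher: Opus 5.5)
Your argument is correct, but the route you chose is not the paper's. The paper gives no separate proof. It presents the corollary as following directly from the identity map of Theorem~\ref{thm:identity_manifold}, which is essentially the first route you set aside. Since $\pr(\w)=\w$ and $\bV_1=\boldsymbol{\Pi}_1$, $\bV_2=\boldsymbol{\Pi}_2$ by \eqref{eq:choice_V12}, one gets $\bp(\w)=\bV_1\w+\bV_2\w^{(2)}=\p(\w)$. By Theorem~\ref{thm:quad_pi} this solves \eqref{eq:invariance} exactly, and a fortiori after left multiplication by $\bW^\top$. You instead prove, for an arbitrary differentiable $\pr$, the identity
\begin{align*}
&\bW^\top\Big(\tfrac{\partial \bp}{\partial \w}\bS\w-\bA\bp(\w)-\bB(\bL_1\w+\bL_2\w^{(2)})\Big)\\
&\quad=\bW^\top\bV_1\Big(\bM_r(\pr(\w))\tfrac{\partial \pr}{\partial \w}\bS\w-\bA_r\pr(\w)-\bH_r\big(\pr(\w)\otimes\pr(\w)\big)-\bB_r(\bL_1\w+\bL_2\w^{(2)})\Big),
\end{align*}
using only $d(\bx_r\otimes\bx_r)=(\bI_r\otimes\bx_r+\bx_r\otimes\bI_r)\,d\bx_r$ and the definitions in \eqref{eq:rom_group}.

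Your route is more general. It holds for any solution of \eqref{eq:rom_invariance}, for any $\bV_1,\bV_2$ and any $\bW$ with $\bW^\top\bV_1$ invertible, and it needs no spectral hypotheses on $\bA$ or $\bA_r$. It is exactly the Petrov--Galerkin statement the paper describes in words after the corollary, and it shows the projected claim is a structural property of the projection, not of the Sylvester-based bases. The paper's route gives a strictly stronger conclusion: the full, unprojected residual vanishes, i.e.\ $\bp=\p$. But it holds only under the hypotheses of Theorems~\ref{thm:quad_pi} and~\ref{thm:identity_manifold}, with $\bV_1,\bV_2$ solving \eqref{eq:pi1_sylvester}--\eqref{eq:pi2_sylvester}. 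That is where the moment-matching content lies, as your closing remark notes.

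One simplification: invertibility of $\bM_r(\pr(\w))$ plays no role in your implication, because \eqref{eq:rom_invariance} is already in implicit form. You only need $\pr$ to be defined, differentiable, and to satisfy \eqref{eq:rom_invariance} on the neighbourhood.
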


The significance of Corollary~\ref{thm:along_W} lies in its direct connection to projection-based model order reduction. The statement of the corollary implies that the residual of the high-dimensional invariance equation is forced to zero when projected onto the subspace defined by the left projection matrix $\bW$. This is equivalent to a generalized Petrov-Galerkin projection condition applied directly to the nonlinear manifold equations. Here, the trial space for the full-order state is defined by a quadratic manifold expansion spanned by $\bV_1$ and $\bV_2$, while the test subspace spanned by $\bW$, allows one to enforce additional desirable properties in the ROM. This provides an important link between nonlinear center manifold theory and classical projection-based frameworks. Theorems~\ref{thm:quad_pi} and~\ref{thm:identity_manifold} allow us to prove that the ROM achieves nonlinear moment matching for the signal generator $(\bS,\bL_1,\bL_2)$.

\begin{thm} \label{thm:moment_matching}
     { Suppose the assumptions of Theorems~\ref{thm:quad_pi} and~\ref{thm:identity_manifold} hold and the ROM is constructed in \eqref{eq:rom_state}--\eqref{eq:rom_group} with the choice of $\bV_1,\bV_2$ as in \eqref{eq:choice_V12} and $\bW$ chosen so that $\bW^\top\bV_1$ is invertible. Then, the center manifold $\boldsymbol{\pi_r}(\w)$ of the ROM maps to the center manifold $\p(\w)$ of the LTI system in \eqref{eq:lti_fom} under the map} \begin{equation*}
        \p(\w)=\bV_1\boldsymbol{\pi_r}(\w)+\bV_2(\boldsymbol{\pi_r}(\w)\otimes \boldsymbol{\pi_r}(\w)).
    \end{equation*} Moreover, the nonlinear moment, $ \widehat{\mathcal M}(\cdot)$, of the ROM defined in \eqref{eq:rom_state}--\eqref{eq:rom_group} matches the nonlinear moment, $\mathcal M(\cdot)$, of the LTI system  in \eqref{eq:lti_fom}, i.e., \begin{align*}
        \mathcal M(\w)=\widehat {\mathcal M}(\w),
    \end{align*} for all $\w$ in a neighborhood of $\w=0$.
\end{thm}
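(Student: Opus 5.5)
The plan is to combine the two preceding results directly, so that essentially no new analysis is needed beyond bookkeeping. First, I will invoke Theorem~\ref{thm:quad_pi}. Under the eigenvalue separation hypothesis on $\bA$ and $\bS\oplus_k\bS$, it gives the unique analytic center manifold of the interconnected FOM~\eqref{eq:interconnection_quad_lti} as $\p(\w)=\p_1\w+\p_2\w^{(2)}$, with $\p_1,\p_2$ solving \eqref{eq:pi1_sylvester}--\eqref{eq:pi2_sylvester}. By the choice \eqref{eq:choice_V12}, this reads $\p(\w)=\bV_1\w+\bV_2(\w\otimes\w)$. Second, I will invoke Theorem~\ref{thm:identity_manifold}. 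Under the separation hypothesis on $\bA_r$, and with $\bW^\top\bV_1$ invertible, it gives that the unique solution of the ROM invariance equation \eqref{eq:rom_invariance} near $0$ is $\pr(\w)=\w$.

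Substituting the second result into the first immediately yields
\begin{equation*}
\bV_1\pr(\w)+\bV_2(\pr(\w)\otimes\pr(\w))=\bV_1\w+\bV_2(\w\otimes\w)=\p(\w),
\end{equation*}
which is the first claim, namely that the ROM center manifold is mapped onto the FOM center manifold by the quadratic decoder $\boldsymbol\Phi$ in \eqref{eq:quad_approx}. For the moment statement, I will use the definition $\widehat{\mathcal M}(\w)=\bC_r\pr(\w)+\bK_r(\pr(\w)\otimes\pr(\w))$ together with $\bC_r=\bC\bV_1$ and $\bK_r=\bC\bV_2$ from \eqref{eq:rom_group}. This gives $\widehat{\mathcal M}(\w)=\bC\big(\bV_1\pr(\w)+\bV_2(\pr(\w)\otimes\pr(\w))\big)=\bC\p(\w)=\mathcal M(\w)$, where the last equality is the definition \eqref{eq:nonlinear_moment_def} with $h(\bx)=\bC\bx$.

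The only point that needs care is the domain of validity. The ROM invariance equation is only meaningful where $\bM_r(\pr(\w))$ is invertible. Since $\pr(\w)=\w$ and $\bM_r(0)=\bI_r$, continuity gives a neighborhood $\mathcal U$ of $0$ on which this holds. On the FOM side, the manifold from Theorem~\ref{thm:quad_pi} is a polynomial, so it is valid globally. I will therefore state the equality for $\w\in\mathcal U$, which matches the ``neighborhood of $\w=0$'' in the statement. I do not expect a real obstacle here. The substantive work, namely uniqueness of the power-series solutions via the Sylvester equations, is already contained in the two invoked theorems. The only subtlety is making sure that uniqueness in Theorem~\ref{thm:identity_manifold} justifies calling $\pr(\w)=\w$ \emph{the} ROM center manifold rather than merely \emph{a} solution, and that is exactly what that theorem asserts.
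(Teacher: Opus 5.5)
Your proposal is correct and follows the paper's proof. Both combine $\p(\w)=\bV_1\w+\bV_2\w^{(2)}$ from Theorem~\ref{thm:quad_pi} with $\pr(\w)=\w$ from Theorem~\ref{thm:identity_manifold}, then use $\bC_r=\bC\bV_1$ and $\bK_r=\bC\bV_2$; your explicit check of the manifold-mapping claim and the neighborhood where $\bM_r$ is invertible only spell out what the paper leaves implicit.
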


\begin{proof}
    The nonlinear moment for the ROM is given as, \begin{align*}
        \widehat{\mathcal M}(\w)=\bC_r\boldsymbol \pi_r(\w) + \bK_r(\boldsymbol \pi_r(\w) \otimes \boldsymbol \pi_r(\w)).
    \end{align*}
    Since $\p(\w)=\bV_1\w+\bV_2\w^{(2)}$ from Theorem~\ref{thm:quad_pi} and $\boldsymbol{\pi_r}(\w)=\w$ in a neighbourhood around 0 from 
    Theorem~\ref{thm:identity_manifold}, we have that\begin{align*}
        \mathcal{M} (\w)=\bC\left(\bV_1\w+\bV_2\w^{(2)}\right)=\bC_r\boldsymbol{\pi_r}(\w)+\bK_r\boldsymbol{\pi_r}^{(2)}(\w)= \widehat{\mathcal{M}}(\w)
    \end{align*}for all $\w$ in a neighborhood of $0$. 
\end{proof} 

This theorem shows that the choice of $\bV_1$ and $\bV_2$ made in \eqref{eq:choice_V12} achieves nonlinear moment matching in the sense of \cite{astolfi2010}. We also note that this result hold for any choice of $\bW$ such that $\bW^\top\bV_1$ is invertible. Corollary~\ref{thm:along_W} provides some insight into the role of $\bW$ in the moment matching framework since we can interpret $\bW$ to be the directions along which we solve the FOM invariance equations. Because this choice of $\bW$ is arbitrary, we have freedom in enforcing additional properties in our ROM. One such condition is forcing the ROM to have no mass matrix term, i.e., pick $\bW$ such that $\bW^\top\bV_2=0$ so that $\bM_r(\bx_r)=\bI_r$. Combining the theoretical results established in this section and the reduced system we derived in Section~\ref{sec:qm_rom}, we propose an algorithm to compute the quadratic reduced system \eqref{eq:rom_state}--\eqref{eq:rom_group}. 
\begin{algorithm} 
\DontPrintSemicolon
\caption{Moment matching quadratic manifold approach}
\label{alg:moment_matching_rom}

\KwIn{Linear Time-Invariant (LTI) system matrices $(\bA, \bB, \bC)$ as given in \eqref{eq:lti_fom}.}
\KwOut{Reduced Order Model (ROM) that achieves moment matching as shown in Theorem~\ref{thm:moment_matching}}\vspace{2mm}
\begin{enumerate}
    \item Choose matrices $\bS\in\mathbb R^{r\times r}$, $\bL_1\in\mathbb{R}^{m\times r}$, and $\bL_2\in\mathbb{R}^{m\times r^2}$ to interpolate desired input signals generated by the signal generator. Some canonical choices for the signal generator are discussed in Section~\ref{sec:signal_space}.
    
    \item Solve Sylvester equations for $\bV_1\in\mathbb R^{n\times r}$ and $\bV_2\in\mathbb R^{n\times r^2}$:
    \begin{align*}
        \bA\bV_1 + \bB\bL_1 &= \bV_1 \bS \\
        \bA\bV_2 + \bB\bL_2 &= \bV_2 (\bS \oplus \bS)
    \end{align*} 
    These Sylvester equations define the center manifold of the full order system as shown in Theorem~\ref{thm:quad_pi}. 

    \item Choose a projection matrix $\bW\in\mathbb R^{n\times r}$ such that $\bW^\top \bV_1$ is invertible. Some choices include:
    \begin{itemize}
        \item Enforce $\bW^\top \bV_2 = \mathbf{0}$ (Eliminates state-dependent mass-matrix term) 
        \item Set $\bW = \bV_1$ (Galerkin projection)
    \end{itemize}

    \item Compute the reduced-order model matrices:
    \begin{align*}
        \bA_r &= (\bW^\top \bV_1)^{-1} \bW^\top \bA \bV_1,  \quad \bB_r = (\bW^\top \bV_1)^{-1} \bW^\top \bB, \quad   \bC_r = \bC \bV_1, \quad   \bK_r = \bC \bV_2,\\
        \bH_r &= (\bW^\top \bV_1)^{-1} \bW^\top \bA \bV_2,\quad   \bE_r = (\bW^\top \bV_1)^{-1} \bW^\top \bV_2,  \quad   \bM_r(\bx_r) = \bI_r + \bE_r (\bx_r \otimes \bx_r)
    \end{align*}

    \item Construct the interpolating QM ROM given by:
    \begin{align*}
        \bM_r(\bx_r) \, \dot{\bx}_r(t) &= \bA_r \bx_r(t) + \bB_r \bu(t) + \bH_r \big(\bx_r(t) \otimes \bx_r(t)\big) \\
        \by_r(t) &= \bC_r \bx_r(t) + \bK_r (\bx_r(t)\otimes \bx_r(t))
    \end{align*}

    \textbf{Theoretical Guarantee:} By construction, the resulting QM ROM matches the nonlinear moment of the full-order model, i.e., $\hat{\mathcal{M}}(\omega) = \mathcal{M}(\omega)$. This ensures exact steady-state dynamic reproduction for any input signal generated by $(\bS, \bL_1, \bL_2)$ (Theorem~\ref{thm:moment_matching}).
\end{enumerate}

\end{algorithm}

Algorithm \ref{alg:moment_matching_rom} summarizes the step-by-step procedure for constructing the optimization-free quadratic reduced-order model. We begin, in Step 1, by defining the signal generator matrices $\bS, \bL_1$, and $\bL_2$, which encode the targeted interpolation frequencies and tangential directions. Then, in Step 2, these parameters are used to determine the linear and quadratic basis matrices, $\bV_1$ and $\bV_2$, by solving the two decoupled linear Sylvester equations \eqref{eq:pi1_sylvester} and \eqref{eq:pi2_sylvester}. Step 3 chooses the left projection matrix $\bW$, which can be tailored to enforce specific geometric properties (such as a standard Galerkin projection where $\bW=\bV_1$ or an orthogonality condition $\bW^\top\bV_2=0$), provided that the invertibility condition on $\bW^\top\bV_1$ is satisfied. Using these bases, in Step 4, one projects the full-order system to efficiently compute the reduced matrices, which notably includes assembling the terms for the state-dependent mass matrix $\bM(\bx_r)$. Finally, in Step 5, the resulting nonlinear reduced-order model is computed, which is theoretically guaranteed to match the targeted nonlinear moments of the original full-order system as per Theorem~\ref{thm:moment_matching}.

\begin{rem}
{The online storage and evaluation of the quadratic ROM constructed via Algorithm~\ref{alg:moment_matching_rom} are independent of the full-order dimension $n$. Although the projection matrices $\bV_1\in\mathbb R^{n\times r}$ and $\bV_2\in\mathbb R^{n\times r^2}$ are computed during the offline phase, they are never loaded during online execution. By precomputing all reduced-order operators offline, the online memory complexity scales strictly as $\mathcal O(r^3)$.}
\end{rem}

\section{Enforcing application specific interpolation conditions}
\label{sec:signal_space}

The selection of an appropriate signal generator matrix $\mathbf{S}$ for moment matching is inherently problem-dependent, as it dictates the specific family of trajectories the reduced-order model (ROM) is constrained to replicate. In this section, we detail canonical choices for $\mathbf{S}$ designed to capture distinct dynamic behaviors, ranging from persistent steady-state oscillations to quasi-polynomial transient signals, and provide explicit analytical solutions for the resulting manifold templates. 

\subsection{Real canonical forms for sinusoidal inputs}\label{subsec:sin_inputs}

To interpolate purely sinusoidal steady-state signals at a given set of driving frequencies, the signal generator matrix must possess purely imaginary eigenvalues $\{\pm j\mu_1, \pm j\mu_2, \dots, \pm j\mu_k\}$. To ensure that all subsequent reduction stages, including the derivation of the projection matrices $\mathbf{V}_1$ and $\mathbf{V}_2$, and consequently the ROM operators themselves, remain strictly within the real domain, we enforce a real canonical form  for $\mathbf{S}$~\cite{astolfi2010, morAntBG20}:
\begin{equation}\label{eq:S_sinusoidal}
    \mathbf{S} = \operatorname{blkdiag}\left(
    \begin{bmatrix} 0 & \mu_1 \\ -\mu_1 & 0 \end{bmatrix}, \, \dots, \, 
    \begin{bmatrix} 0 & \mu_k \\ -\mu_k & 0 \end{bmatrix}
    \right) \in \mathbb{R}^{r \times r},
\end{equation}
where $r = 2k$. By matching the moment with this block-diagonal structure, the ROM accurately replicates the frequency response of the original high-dimensional system at the selected frequencies. {This choice of $\bS$ results in inputs of the form \begin{align}\label{eq:sin_inputs}
    \bu(t)= \left(\sum_{i=1}^r c_{i,1}\sin(\mu_i t) + c_{i,2}\cos(\mu_i t)\right) + \left(\sum_{i,j=1}^r c_{ij,1} \sin((\mu_i+\mu_j)t) + c_{ij,2} \cos((\mu_i+\mu_j)t)\right),
\end{align} where $c_{i,1},c_{i,2},c_{ij,1}, c_{ij,2}$ are constants dependent on $\bL_1,\bL_2,\w_0$ for $1\leq i,j,\leq r$. The constructed ROM, thereby, matches the steady-state behaviour of the FOM for all inputs of the form in \eqref{eq:sin_inputs}. We note that the ROM not only interpolates frequencies $\mu_i,~1\leq i\leq r$, but also the mixed interpolation frequencies $\mu_i+\mu_j, 1\leq i,j\leq r$. Furthermore, this real canonical representation of $\bS$ eliminates the need for complex arithmetic, directly yielding a  real-valued reduced-order system. This construction is conceptually analogous to the basis transformations employed to enforce realness within interpolatory model reduction and Loewner frameworks \cite{morAntBG20}.

\subsection{Closed-form solutions for diagonal signal generators} \label{subsec:diagonal_gen}

To illustrate the explicit algebraic bridge between center manifold theory and classical transfer function interpolation, consider a diagonalized signal generator configuration where $\mathbf{S} = \operatorname{diag}(s_1, \dots, s_r) \in \mathbb{C}^{r \times r}$. Let $\mathbf{\ell}_i \in \mathbb{R}^{m}$ denote the $i$-th column of $\mathbf{L}_1$ and let $\mathbf{b}_{ij} \in \mathbb{R}^{m}$ represent the column of $\mathbf{L}_2$ associated with the interacting state components $\omega_i \omega_j$ (for $i,j = 1, \dots, r$). Under this diagonal framework, the Sylvester equations defining the linear and quadratic manifold mappings decouple column-by-column. This yields the following exact, closed-form expressions for the columns of $\boldsymbol{\Pi}_1$ and $\boldsymbol{\Pi}_2$:
\begin{align}
    \boldsymbol{\Pi}_1 &= \left[ (s_1\mathbf{I} - \mathbf{A})^{-1}\mathbf{B}\mathbf{\ell}_1, \, \dots, \, (s_r\mathbf{I} - \mathbf{A})^{-1}\mathbf{B}\mathbf{\ell}_r \right] \in\mathbb R^{n\times r},\label{eq:pi1_closed} \\
    \boldsymbol{\Pi}_2 &= \left[ \dots, \, \left((s_i + s_j)\mathbf{I} - \mathbf{A}\right)^{-1}\mathbf{B}\mathbf{b}_{ij}, \, \dots \right]\in\mathbb R^{n\times r^2}. \label{eq:pi2_closed}
\end{align}
Recalling the FOM transfer function $\mathbf{H}(s) = \mathbf{C}(s\mathbf{I} - \mathbf{A})^{-1}\mathbf{B}$, the steady-state nonlinear system output profile $\mathcal{M}(\omega) = \mathbf{C}\boldsymbol{\Pi}_1\omega + \mathbf{C}\boldsymbol{\Pi}_2(\omega \otimes \omega)$ can be mapped directly to frequency-domain evaluations of the full-order model as 
\begin{equation}\label{eq:moment_explicit}
    \mathcal{M}(\omega) = \sum_{i=1}^{r} \mathbf{H}(s_i)\mathbf{\ell}_i \omega_i + \sum_{i=1}^{r}\sum_{j=1}^{r} \mathbf{H}(s_i + s_j)\mathbf{b}_{ij} \omega_i \omega_j.
\end{equation}
Equation \eqref{eq:moment_explicit} provides a clear interpretation of the framework: the linear manifold component interpolates the system's response at the fundamental driving frequencies $s_i$, while the quadratic component captures the "mixed" frequencies $s_i + s_j$. Thus, the reduced-order model incorporates the mathematical effects of $r$ frequencies as well as $(r^2+r)/2$ mixed frequencies while maintaining a strictly $r$-dimensional reduced state-space. 

\begin{rem}\label{rem:rom_dimension}
For readers acquainted with the projection-based interpolatory model reduction for matching linear moments, the structure of $\boldsymbol{\Pi}_1$ and $\boldsymbol{\Pi}_2$ would look familiar. However,
the structural formulation in \eqref{eq:moment_explicit} reveals a crucial distinction regarding the interpolation properties of the reduced-order model. Under the classical linear rational interpolation paradigm \cite{morAntBG20}, matching the full-order transfer function at the fundamental frequencies $s_i$ and the mixed harmonic frequencies $s_i + s_j$ would require a linear ROM of total dimension $r+(r+r^2)/2=\mathcal{O}(r^2)$. A linear subspace treats the mixed-frequency modes $s_i + s_j$ as independent, uncoupled coordinates. While such a high-dimensional linear system achieves identical one-sided interpolation points, it is not guaranteed to preserve the system's intrinsic center manifold geometry.  
\end{rem}

\subsection{Matching higher-order derivatives}
The proposed framework can be systematically extended to capture complex transient dynamics such as quasipolynomial signals that naturally arise in systems characterized by repeated eigenvalues or damped modes. This is accomplished by constructing a signal space that interpolates the higher-order derivatives of the transfer function. To illustrate this construction, consider a signal generator defined by the matrices
\begin{equation}\label{eq:S_quasipolynomial}
    \mathbf{S} = \begin{bmatrix}
        \mu & 1\\
        0 & \mu
    \end{bmatrix} \in \mathbb{C}^{2 \times 2}, \quad 
    \mathbf{L}_1 = \begin{bmatrix} \ell_1 & \ell_2 \end{bmatrix} \in \mathbb{C}^{m\times 2}, \quad 
    \mathbf{L}_2 = \begin{bmatrix} b_1 & b_2 & b_3 & b_4 \end{bmatrix} \in \mathbb{C}^{m\times 4}.
\end{equation}

The resulting nonlinear moment $\mathcal{M}(\mathbf{w})$ can then be evaluated as
\begin{align}
    \mathcal{M}(\mathbf{w}) &= \bC \mathbf{V}_1 \mathbf{w} + \bC \mathbf{V}_2 (\mathbf{w} \otimes \mathbf{w}) \nonumber\\
    &= \mathbf{H}(\mu)(\ell_1 w_1 + \ell_2 w_2) + \mathbf{H}(2\mu)(b_1 w_1^2 + b_2 w_1 w_2 + b_3 w_2 w_1 + b_4 w_2^2) \nonumber\\
    &\quad + \mathbf{H}'(\mu)\ell_1 w_2 + \mathbf{H}'(2\mu)\left(b_1 w_1 w_2 + b_1 w_2 w_1 + (b_2+b_3)w_2^2\right) + \mathbf{H}''(2\mu)b_1 w_2^2 \nonumber\\
    &= c_1 \mathbf{H}(\mu) + c_2 \mathbf{H}(2\mu) + c_3 \mathbf{H}'(\mu) + c_4 \mathbf{H}'(2\mu) + c_5 \mathbf{H}''(2\mu).
\end{align}

This formulation yields a basis of amplitude-modulated, exponentially decaying or oscillating signals. Consequently, any scalar control input $\mathbf{u}(t)$ generated within this signal space takes the explicit form
\begin{equation}\label{eq:u_transient}
    \mathbf{u}(t) = c_1 e^{\mu t} + c_2 t e^{\mu t} + c_3 e^{2\mu t} + c_4 t e^{2\mu t} + c_5 t^2 e^{2\mu t},
\end{equation}
where the weighting coefficients $c_1, \dots, c_5$ are uniquely determined by the initial conditions $\mathbf{w}(0)$ and the matrices $\mathbf{L}_1, \mathbf{L}_2$. Because $s \in \mathbb{C}$, the inputs and projection matrices are, in general, complex-valued. To enforce real-valued physical quantities, $\mathbf{S}$ can be represented in real canonical form to interpolate $s = i\mu$ ($\mu \in \mathbb{R}$):
\begin{equation}\label{eq:S_real}
    \mathbf{S} = \begin{bmatrix}
        0 & \mu & 1 & 0\\
        -\mu & 0 & 0 & 1\\
        0 & 0 & 0 & \mu\\
        0 & 0 & -\mu & 0
    \end{bmatrix} \in \mathbb{R}^{4 \times 4}.
\end{equation} 
By parameterizing the signal generator in this real-valued canonical structure with a Jordan block, the nonlinear moment matching condition reduces directly into solving matrix Sylvester equations. This  aligns with the foundational Sylvester equation formulations for projection based model reduction for LTI system in \cite{gallivan2004sylvester}.

\section{Numerical results}\label{sec:numerical}

While the theoretical results and the reduction framework proposed in this work are broadly applicable to any linear time-invariant (LTI) system of the form \eqref{eq:lti_fom} including regimes where conventional linear subspaces are proven to perform effectively, linear projection methods remain fundamentally ill-suited for efficiently capturing shift-variant dynamics or LTI systems with slow decay of Hankel singular values (i.e., slow Kolmogorov $n$-width decay).  Thus, we consider classical transport-dominated problems, specifically the one-dimensional advection-diffusion equation and the one-dimensional wave equation, with their characteristic slow decay of the (normalized) Hankel singular values shown in Figure~\ref{fig:combined_hsv}. Consequently, such systems serve as ideal benchmarks to demonstrate the efficacy and computational gains of the proposed quadratic manifold framework. All numerical simulations were implemented on \matlab R2023b (version 23.2.0, Update 4) on a laptop equipped with an Apple M3 Pro chip, running macOS 14.6.1. The repository to generate all plots and simulations in this paper can be found in \cite{padhi_code}.

\begin{figure}
    \centering
    \begin{subfigure}{0.45\linewidth}
        \centering
        \includegraphics[width=\linewidth]{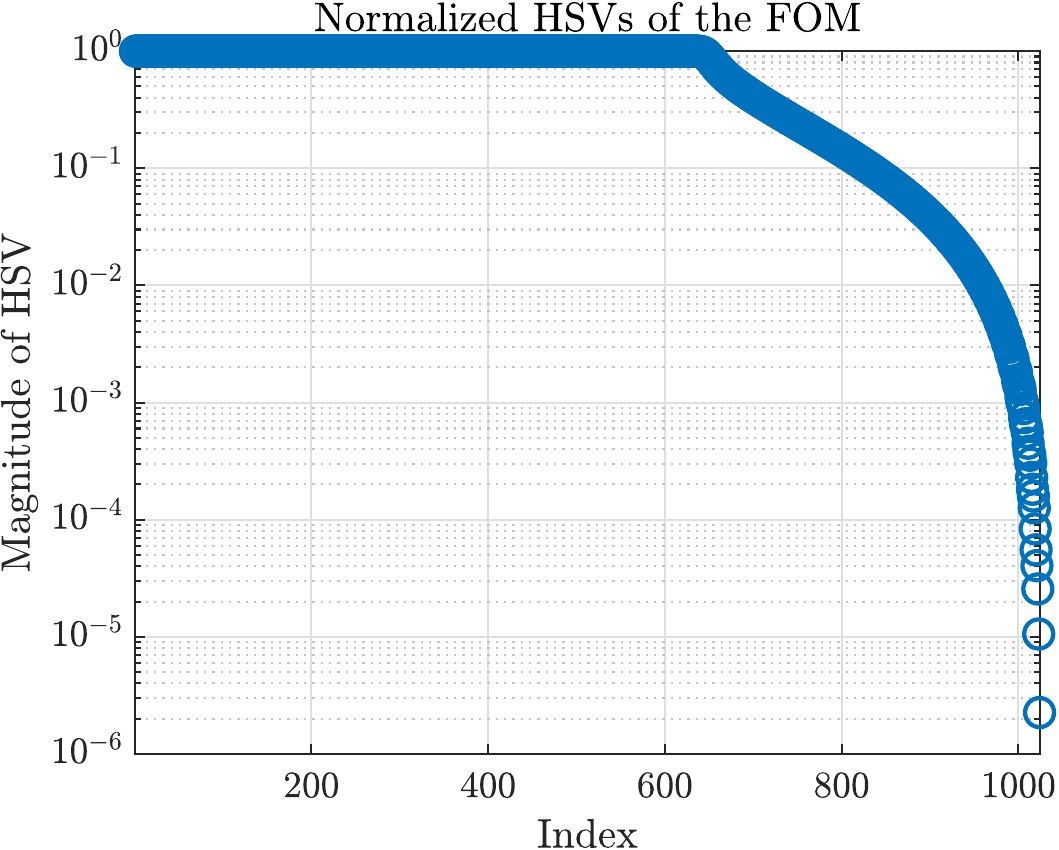}
        \caption{One-dimensional advection equation ($n=1024$).}
        \label{fig:hsv}
    \end{subfigure}
    \hfill
    \begin{subfigure}{0.45\linewidth}
        \centering
        \includegraphics[width=\linewidth]{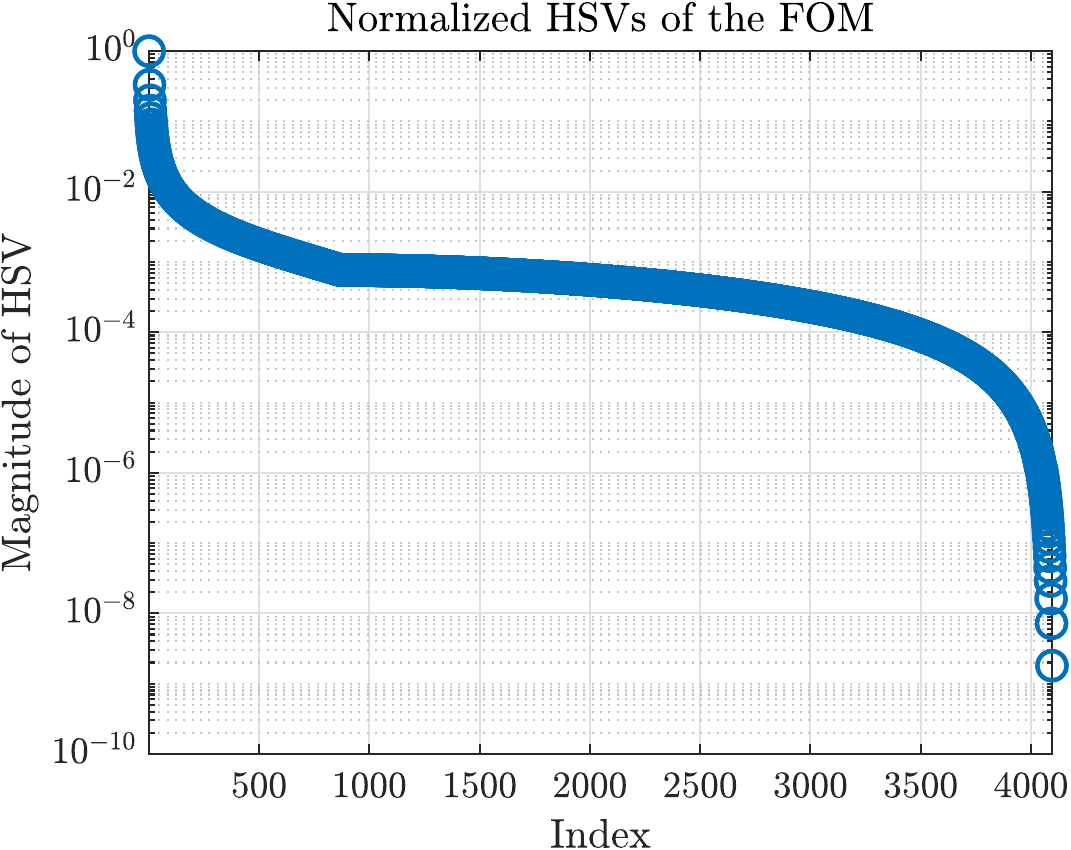}
        \caption{One-dimensional damped wave equation ($n=4096$).}
        \label{fig:hsv_decay}
    \end{subfigure}
    \caption{Decay behavior of the normalized Hankel singular values (HSV), illustrating the characteristically slow decay rates for the respective systems with the chosen parameters.}
    \label{fig:combined_hsv}
\end{figure}

\subsection{One-dimensional linear transport equation} \label{sec:adv_eqn}
As the first benchmark, we consider the dynamics of a one-dimensional linear transport (advection) equation defined on a spatial domain of length 1. This problem models the pure translation of a scalar field with a constant velocity $v=1$ directed toward the right. The system is governed by the following first-order hyperbolic partial differential equation (PDE):
\begin{equation}
\frac{\partial \xi}{\partial t} +  \frac{\partial \xi}{\partial z} = 0, \quad z \in (0, 1), \quad t > 0.
\label{eq:advection_pde}
\end{equation}
Here, $\xi(z,t)$ represents the continuous scalar state (for example displacement) at spatial position $z$ and time $t$. The system is driven by a time-dependent boundary condition on the left ($z = 0$), given by
\begin{equation}
\label{eq:boundary_condition}
\xi(0, t) = \bu(t),
\end{equation}
where $\bu(t)$ is a scalar input. The system is observed at the right boundary ($z = 1$), with $\by(t) = \xi(1, t)$ representing the scalar output of the system. By explicitly enforcing the Dirichlet boundary condition at the inlet node ($z=0$), the remaining $n = N$ interior and outlet nodes form the active state vector $\bx(t) \in \mathbb{R}^n$. The resulting LTI system is in the form of~\eqref{eq:lti_fom} where the state-space matrices $\mathbf{A} \in \mathbb{R}^{n \times n}$ and $\mathbf{B} \in \mathbb{R}^{n \times 1}$ are obtained using Chebyshev pseudospectral method for spatial discretization. This ensures we do not introduce numerical diffusion to the system, thereby making the system diffusion-dominated. The output matrix $\mathbf{C} \in \mathbb{R}^{1 \times n}$ acts as a selector vector isolating the node corresponding to the observation node at $z=1$ and we set $n=1024$ to obtain a LTI system of that dimension. As we see in Figure~\ref{fig:hsv}, the system showcases extreme slow decay in the singular values making it a challenging problem for model reduction.

\subsubsection*{Constructing the interpolating reduced system}

We construct a reduced-order system as defined in Section~\ref{sec:qm_rom} that achieves moment matching for a specified signal space $(\bS, \bL_1, \bL_2)$ as shown in Theorem.~\ref{thm:moment_matching}. The signal generator matrices are chosen as,

\begin{equation}\label{eq:signal_advection}
    \begin{aligned}
    \bS &= \operatorname{blkdiag}\left(\left[\begin{array}{cc}
       0&\mu_1  \\
     \overline{\mu}_1 & 0
    \end{array}\right],  \dots, \left[\begin{array}{cc}
       0& \mu_{r/2}   \\
      \overline{\mu}_{r/2}& 0  
    \end{array}\right]\right)\in\mathbb R^{r\times r}, \\
    \bL_1 &= [1~1~\dots~1] \in \mathbb{R}^{1 \times r}, \quad
    \bL_2 = [1~1~\dots~1] \in \mathbb{R}^{1 \times r^2}.
\end{aligned}
\end{equation}

We set the reduction order to $r=20$. The interpolation frequencies are chosen as complex conjugate pairs, spaced logarithmically along the imaginary axis between $10^{1}$ and $10^{3}$. This selection also allows us to study dynamics with different timescales using slow varying and fast varying input signals. For simplicity and uniformity, the tangential weights are set to ones. Under this choice of signal generator, we construct a quadratic ROM of the form \eqref{eq:rom_state}--\eqref{eq:rom_group} using Algorithm~\ref{alg:moment_matching_rom}.

\begin{figure}
    \centering
    \includegraphics[width=1\linewidth]{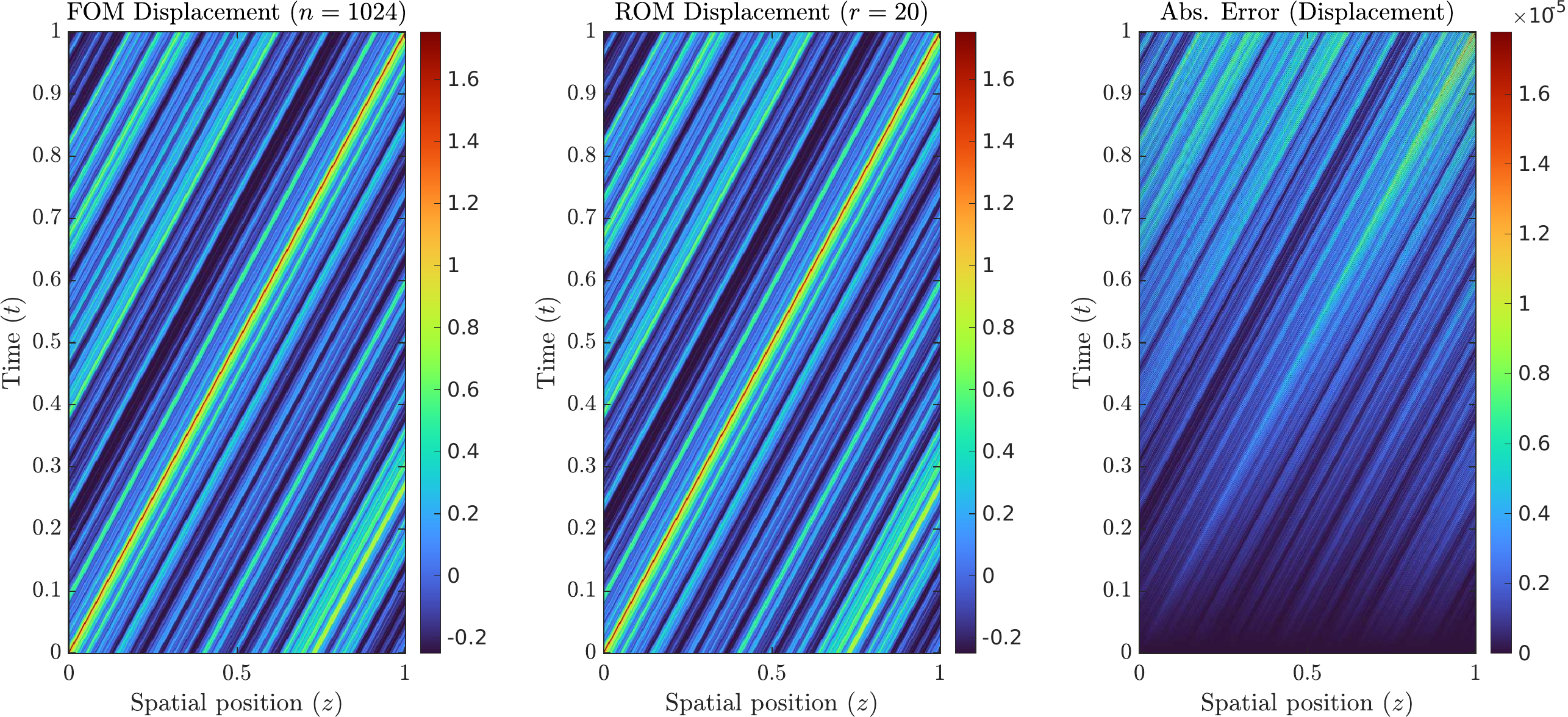}
    \caption{Steady state reconstruction of the discretized linear transport equation system on the center manifold. }\label{fig:state_reconstruction}
\end{figure}

 To show that the quadratic ROM \eqref{eq:rom_state}-- \eqref{eq:rom_group} constructed from the algorithm indeed achieves moment matching, we generate a signal from the signal generator in \eqref{eq:signal_advection} with a random initial condition $\omega_0\in\mathbb R^{20}$. This input signal is then used to simulate the QM ROM and FOM from $t\in[0,1]$. We plot the state evolution and output of both the full and reduced order systems assuming that the two systems start on their respective center manifolds, i.e., \begin{align*}
    \bx(0)&=\boldsymbol{\pi}(\w_0)=\bV_1\w_0 + \bV_2 \w_0^{(2)}\in \mathbb R^{1024},\quad \quad\bx_r(0)=\boldsymbol{\pi}_r(\w_0)=\w_0\in\mathbb R^{20}.
\end{align*} We then lift the reduced states $\bx_r(t)\in\mathbb R^{20}$ (for $t>0$) to the full space using the quadratic mapping, 
\begin{align*}
\boldsymbol{\Phi}(\mathbf{x}_r(t)) = \mathbf{V}_1 \mathbf{x}_r(t) + \mathbf{V}_2 (\mathbf{x}_r(t) \otimes \mathbf{x}_r(t)) \in \mathbb{R}^{1024},
\end{align*}
allowing us to evaluate the state reconstruction error against the true FOM solution. Figure~\ref{fig:state_reconstruction} illustrates the space-time evolution of the state for the full-order model (FOM), the proposed quadratic manifold reduced-order model (ROM), and the resulting absolute reconstruction error under the input signal generated by the signal generator in \eqref{eq:signal_advection}. The distinct diagonal bands in the left and middle panels prominently capture the continuous wave propagation of the FOM ($n = 1024$) and the ROM ($r = 20$), respectively, across the spatial grid over the simulation time $t \in [0, 1]$. Despite a drastic reduction in dimensionality, the quadratic manifold ROM successfully tracks the travelling wavefronts accurately. 

\begin{figure}
    \centering
    \includegraphics[width=0.6\linewidth]{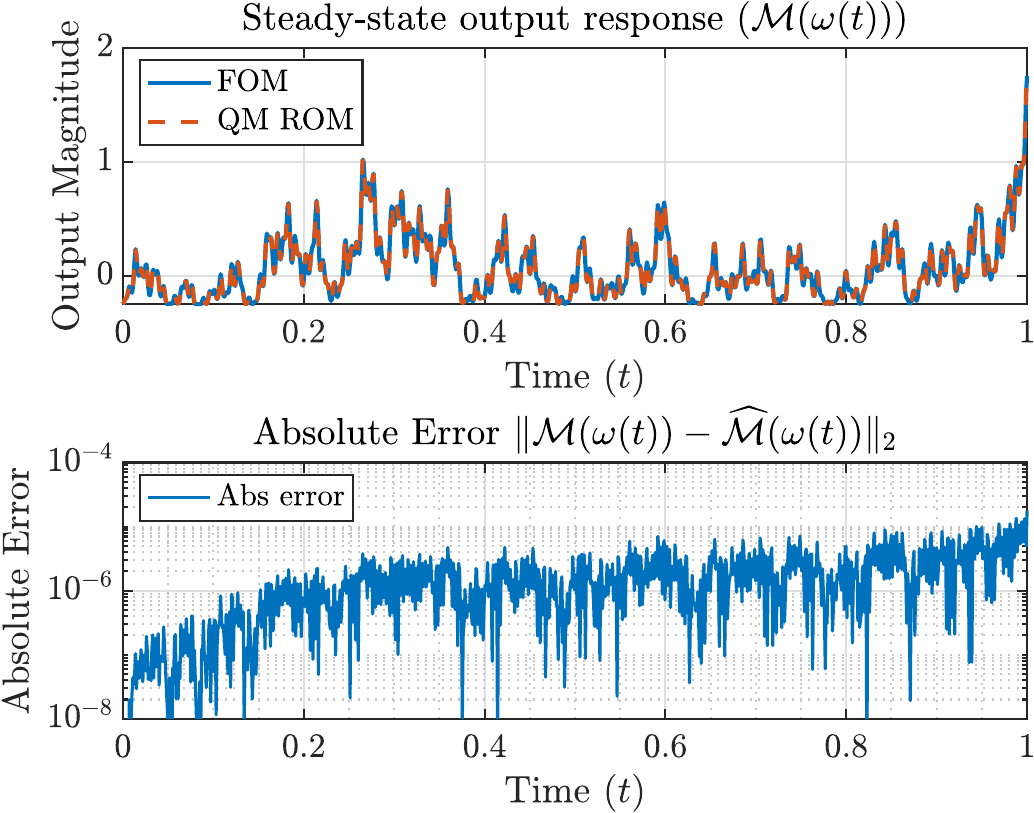}
    \caption{Numerical verification of moment matching property for the one-dimensional advection equation}
    \label{fig:moment_advection}
\end{figure}

We recall that the steady state output of the FOM under the signal generator is given by the mapping $\mathcal M(\w(t))$ where $\w(t)$ represents the signal state vector. The top panel in Figure~\ref{fig:moment_advection} displays the time history of the steady-state system outputs $\mathcal{M}(\omega(t))$ and $\widehat{\mathcal M}(\w(t))$ for the FOM and the quadratic manifold ROM (QM ROM), respectively, over $t \in [0, 1]$. The trajectories of the full and reduced models are virtually indistinguishable, with the red dashed line perfectly tracking the complex, multi-frequency oscillations of the reference full-order system. The bottom panel plots the absolute output error $|\mathcal{M}(\omega(t)) - \widehat{\mathcal{M}}(\omega(t))|$ for both system components on a semi-logarithmic scale. Theoretically, the quadratic ROM tracks the steady state outputs and center manifold of the full order system exactly. However, minor discrepancies accumulate in practice due to numerical integration. Nevertheless, the relative error remains strictly bounded and low throughout the entire simulation horizon. These results showcase the capability of the proposed framework to accurately capture transport-dominated dynamics within an exceptionally low-dimensional state space. Table~\ref{tab:runtime_advection} reports the empirical online runtimes comparing the QM-ROM against the FOM, with the QM-ROM offering over a $380\times$ online speedup. Further computational profiling and theoretical time-complexity details are provided in Section~\ref{sec:computational_profiling}.

\subsection{One dimensional damped wave equation}
As the next benchmark, we consider the dynamics of a one-dimensional damped wave equation on a spatial domain of length $1$. The system is governed by the PDE given below.
\begin{equation}
    \frac{\partial^2 \xi}{\partial t^2} + \gamma \frac{\partial \xi}{\partial t} = c^2 \frac{\partial^2 \xi}{\partial z^2} + f(z,t),
\end{equation}
where $\xi(z,t)$ is the displacement at spatial position $z$ and time $t$, $c$ represents the speed of wave propagation, $\gamma>0$ is the viscous damping coefficient, and $f(z,t)$ represents the external applied force. The PDE is spatially discretized over $n$ internal grid points using a standard central finite difference method. Let $\mathbf{w}(t) \in \mathbb{R}^n$ represent the vector of displacements at these grid points. To cast this second-order system into a standard first-order form, we augment the displacement and velocity vectors to define the state vector as $\bx(t) = \begin{bmatrix} \mathbf{w}(t)^T & \dot{\mathbf{w}}(t)^T \end{bmatrix}^T \in \mathbb{R}^{2n}$. This yields a continuous-time linear time-invariant (LTI) system of dimension $2n$ in the form of~\eqref{eq:lti_fom} where $\bu(t)$ is the scalar input and $\by(t)$ is the scalar output (the measured displacement). The spatial distribution of the input, originally $f(z,t)$ in the PDE, is captured by the input matrix $\bB$. The system matrix $\bA \in \mathbb{R}^{2n \times 2n}$, the input matrix $\bB \in \mathbb{R}^{2n \times 1}$, and the output matrix $\bC \in \mathbb{R}^{1 \times 2n}$ are defined as
\begin{equation}
    \bA = \begin{bmatrix} \mathbf{0}_{n \times n} & \bI_{n \times n} \\ c^2 \bL_n & -\gamma \bI_{n \times n} \end{bmatrix}, \quad \bB = \begin{bmatrix} \mathbf{0}_{n \times 1} \\ \tilde{\bB} \end{bmatrix}, \quad \bC = \begin{bmatrix} \tilde{\bC} & \mathbf{0}_{1 \times n} \end{bmatrix},
\end{equation}
where $\bL_n$ is the discrete Laplacian matrix representing the finite difference approximation of the spatial derivative. In this specific implementation, the input force is applied at the center of the domain. Therefore, $\tilde{\bB}$ places a value of $1$ at the index corresponding to the middle node and zeros everywhere else, meaning the input directly affects only the velocity of that specific node. Similarly, the measurement matrix $\tilde{\bC}$ is designed to extract the displacement of this exact same middle node.

For the numerical experiments, the physical parameters are chosen as $c=1$, and $\gamma=10^{-8}$. Spatially discretizing the domain with $n=2048$ internal grid points yields an LTI system of dimension $2n = 4096$. The selection of a small, non-zero damping coefficient $\gamma$ ensures that the system dynamics remain transport-dominated, while strictly displacing the eigenvalues of $\bA$ from the imaginary axis into the open left-half plane, a standard and realistic assumption when numerically simulating such systems. The Hankel singular values (HSVs) for this system are shown in Figure~\ref{fig:hsv_decay}. As illustrated, the HSVs exhibit an exceptionally slow decay rate. This characteristic is typical for wave propagation problems and indicates that traditional linear model reduction methods would require a high-dimensional state to capture the dynamics accurately. 

\subsubsection*{Constructing the interpolating reduced system}

In this section, we construct a reduced-order system that achieves moment matching for a specified signal space defined by the tuple $(\bS, \bL_1, \bL_2)$. The signal generator matrix $\bS$ is selected to match the imaginary frequencies close to the eigenvalues of the original system matrix $\bA$. For simplicity and uniformity, the tangential weights are set to ones. The resulting signal generator matrices are chosen as,
\begin{align*}
    \bS &= \operatorname{blkdiag}\left(\left[\begin{array}{cc}
       0&\mu_1  \\
     \overline{\mu}_1 & 0
    \end{array}\right],  \dots, \left[\begin{array}{cc}
       0& \mu_{r/2}   \\
      \overline{\mu}_{r/2}& 0  
    \end{array}\right]\right)\in\mathbb R^{r\times r}, \\
    \bL_1 &= [1~1~\dots~1] \in \mathbb{R}^{1 \times r}, \quad\quad\bL_2 = [1~1~\dots~1] \in \mathbb{R}^{1 \times r^2}.
\end{align*}

\begin{figure}
    \centering
    \includegraphics[width=\linewidth]{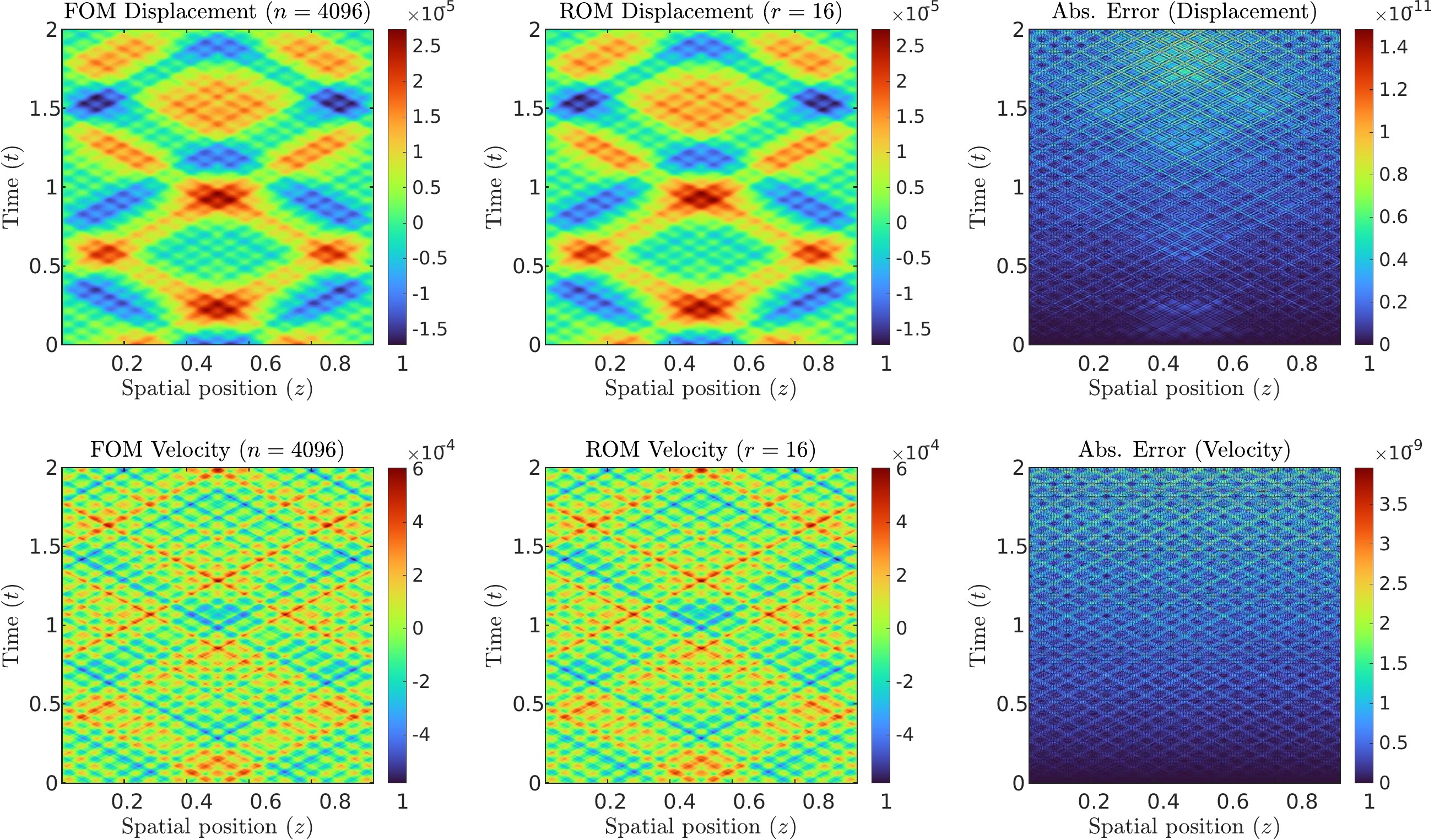}
    \caption{Performance evaluation of the framework demonstrating the high-fidelity tracking achieved by the quadratic ROM. Top: Comparison of the full-order and reduced-order trajectory dynamics (displacement) in the state space and absolute error made in (displacement) state reconstruction. Bottom: Comparison of the full-order and reduced-order trajectory dynamics (velocity) in the state space and absolute error made in (velocity) state reconstruction .}
    \label{fig:disp_velocity}
\end{figure}

We set the reduction order to $r=16$. The interpolation frequencies are chosen as complex conjugate pairs, spaced logarithmically along the imaginary axis between $10$ and $10^{2.5}$. This selection is motivated by the fact that the poles of the original system are located around this region. By spanning this frequency range, the reduced model accurately captures both the high-frequency components, which represent short-scale phenomena, and the low-frequency components, which dictate slow-scale dynamics. Following the procedure in Algorithm~\ref{alg:moment_matching_rom}, we compute the projection matrices $\bV_1$ and $\bV_2$ by solving the Sylvester equations in \eqref{eq:pi1_sylvester} and \eqref{eq:pi2_sylvester} and setting $\bW=\bV_1$.

\begin{figure}
    \centering
    \includegraphics[width=0.6\linewidth]{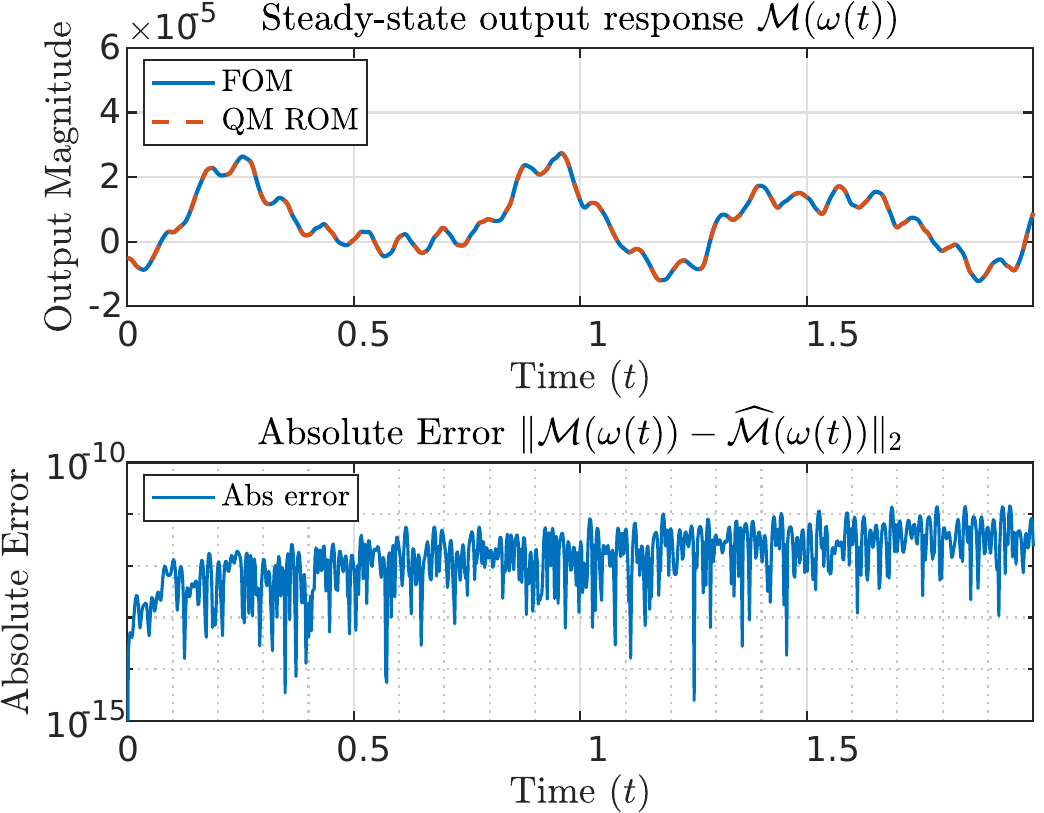}
    \caption{Numerical verification of the moment matching property in the one-dimesional wave equation}
    \label{fig:mom_match_wave}
\end{figure}

Just like in the advection equation example, we try to show that the quadratic ROM (\eqref{eq:rom_state}-- \eqref{eq:rom_group}) constructed from the algorithm indeed achieves moment matching at $(\bS,\bL_1,\bL_2)$. We generate an input signal generated by the signal generator with a random initial condition $\omega_0\in\mathbb R^r$ and plot the state evolution and output of both the full and reduced order systems, over $t\in[0,2]$, assuming that the two systems start on their respective center manifolds, i.e., \begin{align*}
    \bx(0)=\boldsymbol{\pi}(\w_0)=\bV_1\w_0 + \bV_2 \w_0^{(2)}\in\mathbb R^{4096}~~\mbox{and}~~\bx_r(0)=\boldsymbol{\pi}_r(\w_0)=\w_0\in\mathbb R^{16}.
\end{align*} To facilitate direct comparison between the low-dimensional state trajectory $\bx_r(t)\in\mathbb R^{16}$ and the full-order state trajectory $\bx(t)\in\mathbb R^{4096}$, the ROM state is lifted to the full-order space via the mapping\begin{align*}
    \boldsymbol{\Phi}(\mathbf{x}_r(t)) = \mathbf{V}_1 \mathbf{x}_r(t) + \mathbf{V}_2 (\mathbf{x}_r(t) \otimes \mathbf{x}_r(t)) \in \mathbb{R}^{4096}.
\end{align*} Figure~\ref{fig:disp_velocity} illustrates the state evolution of the FOM, state reconstruction of the QM-ROM and the absolute error in the state reconstruction of the QM-ROM. Note that the full state vector $\bx(t)$ is partitioned such that the first $n$ components correspond to the nodal displacements, while the remaining $n$ components represent the nodal velocities. The left panel displays the evolution of the state displacement and velocity versus the reduced-order approximation $\boldsymbol{\Phi}(\bx_r(t))$. It is evident that the quadratic ROM successfully captures the transient dynamics of the wave equation, remaining tightly bound to the full-order trajectory even as the wave propagates. The right panel quantifies this performance through the absolute error. The error remains small and bounded, confirming that the solution to the Sylvester equations \eqref{eq:pi1_sylvester} and \eqref{eq:pi2_sylvester} effectively identifies the nonlinear center manifold of the system. 

In addition to state-space reconstruction, Figure~\ref{fig:mom_match_wave} evaluates the steady-state output response $\mathcal{M}(\boldsymbol{\omega}(t))$ of the full-order system alongside the reduced-order output $\widehat{\mathcal{M}}(\boldsymbol{\omega}(t))$ obtained from the quadratic moment-matching ROM (QM ROM). As depicted in the top panel, the output trajectory predicted by the QM ROM is visually indistinguishable from the full-order response over the entire time interval $t \in [0, 2]$. The bottom panel provides the absolute error $\|\mathcal{M}(\boldsymbol{\omega}(t)) - \widehat{\mathcal{M}}(\boldsymbol{\omega}(t))\|_2$, which remains bounded between $10^{-12}$ and $10^{-11}$. This  serves as concrete numerical verification that the proposed ROM satisfies the moment-matching conditions in the steady-state output space for the one-dimensional wave equation. Table~\ref{tab:runtime_wave} reports the empirical online runtimes comparing the QM-ROM against the FOM, with the QM-ROM offering over a $330\times$ online speedup.

\subsection{Computational profiling and online performance}
\label{sec:computational_profiling}
In this section, we address the computational scaling of the framework with system dimension $n$. We use the term ``offline time" to refer to the time taken to compute the reduced system and the term ``online time" to mean the time taken to simulate the system once it is formed. We show that our method not only has a competitive online time scaling but also quick offline time. A potential computational bottleneck in quadratic manifold-based ROMs stems from the presence of the state-dependent reduced mass matrix, given in \eqref{eq:rom_group} as
\begin{equation}
    \mathbf{M}_r(\mathbf{x}_r) = \mathbf{I}_r + \mathbf{E}_r(\mathbf{I}_r \otimes \mathbf{x}_r + \mathbf{x}_r \otimes \mathbf{I}_r).
\end{equation}
Since $\mathbf{M}_r(\mathbf{x}_r)$ varies continuously with the reduced state $\mathbf{x}_r(t)$, a naive implementation would explicitly assemble the large, highly redundant Kronecker product matrix of size $r^2 \times r$ and solve a dense $r \times r$ linear system at every time step or stage of an ODE solver. Such an unoptimized approach would heavily penalize the execution runtime and jeopardize any computational gains. We provide two methods to effectively handle the simulation of this term without losing online speedups. 

We exploit the underlying algebraic structure of the Kronecker product, allowing us to evaluate the action of $\mathbf{M}_r(\mathbf{x}_r)$ without ever explicitly forming or storing the constituent high-dimensional matrices. Consider the term $\bM_r(\hat \bx)$  \begin{align*}
    \bM_r(\hat \bx):=(\bI_r+\bE_r(\hat \bx \otimes \bI_r + \bI_r \otimes \hat \bx))  \quad \text{with } \bE_r = \left[ \bE_r^{(1)} \quad \bE_r^{(2)} \quad \dots \quad \bE_r^{(r)} \right],
\end{align*} where $\bE_r\in\mathbb R^{r\times r^2}$ as defined in \eqref{eq:rom_group}, $\hat\bx\in\mathbb R^r$ is an arbitrary vector, and $\bE_r^{(i)}\in\mathbb R^{r\times r}$ represent the matrix blocks of $\bE_r$. Then using the properties of the Kronecker product, we have that \begin{align}
    \bM_r(\hat \bx) = \sum_{i=1}^r \bx_i \bE_r^{(i)} + \left[\bE_r^{(1)}\hat\bx \quad \bE_r^{(2)}\hat\bx \quad \dots \quad \bE_r^{(r)}\hat\bx \right].\label{eq:fast_kron}
\end{align} As we see in \eqref{eq:fast_kron}, the computation of $\bM_r(\hat\bx)$ can be done in $\mathcal O(r^3)$ flops where $r\ll n$, e.g., $r\approx 20$. Direct LU or Cholesky factorization of this assembled matrix also scales as $\mathcal{O}(r^3)$, thereby ensuring the online time complexity is cubic in $r$ and significantly cheaper than solving the full order system. The implementation details can be found in the repository~\cite{padhi_code}.

As an alternative, enforcing the condition $\bW^\top\bV_2=0$ (a standard practice in quadratic manifold approaches) results in $\bE_r=0$, simplifying the reduced mass matrix to the identity matrix. While our framework retains the generality of a state-dependent mass matrix to ensure strict moment matching, this choice of $\bW$ offers a viable path for scenarios where online efficiency is the paramount priority. 

Table~\ref{tab:runtime_comparison_combined} summarizes the online simulation runtimes for the full-order models and the proposed QM ROMs across both benchmark problems. The QM ROM achieves substantial online speedups, reducing execution times from hundreds of seconds down to under one second and thus yielding an approximate $388\times$ speedup for the 1D advection equation and a $330\times$ speedup for the 1D damped wave equation. This reduction in computational cost stems directly from confining the online time integration to the low-dimensional state space $\mathbb{R}^r$, effectively decoupling the online solver cost from the large full-order dimension $n$. 

\begin{table}
\centering
\caption{Computational wall-clock online runtime comparison between the full-order model (FOM) and proposed QM ROM across benchmark problems.}
\label{tab:runtime_comparison_combined}
\begin{subtable}[b]{0.48\linewidth}
    \centering
    \caption{1D Advection Equation}
    \label{tab:runtime_advection}
    \begin{tabular}{lc}
    \toprule
    \textbf{Method}  &\textbf{Online Time (s)} \\
    \midrule
    FOM ($n=1024$)  &378.4343 \\
    QM ROM ($r=20$) & \textbf{0.9752} \\
    \bottomrule
    \end{tabular}
\end{subtable}
\hfill
\begin{subtable}[b]{0.48\linewidth}
    \centering
    \caption{1D Damped Wave Equation}
    \label{tab:runtime_wave}
    \begin{tabular}{lc}
    \toprule
    \textbf{Method} & \textbf{Online Time (s)} \\
    \midrule
    FOM ($n=4096$) & 167.8696 \\
    QM ROM ($r=16$)& \textbf{0.5042} \\
    \bottomrule
    \end{tabular}
\end{subtable}
\end{table}

\subsection{Theoretical and numerical comparisons with linear methods}
\label{subsec:comparison_irka}

To contextualize the performance benefits of the proposed Quadratic Manifold (QM) framework, we evaluate its online time complexity and empirical performance against classical linear projection-based reduced-order models (ROMs), such as One-Sided Rational interpolation (OSR) \cite{morAntBG20}.

For transport-dominated or wave-propagation problems, e.g., the 1D advection equation in Section~\ref{sec:adv_eqn}, linear projection methods suffer from a fundamental barrier due to the slow decay of the Kolmogorov $n$-width (the Hankel singular values for the LTI systems). Thus, to reach an acceptable error tolerance, a linear subspace requires a large subspace dimension. As discussed in Section~\ref{subsec:diagonal_gen}, the nonlinear moment $\mathcal{M}(\omega)$ of the system is given by
\begin{equation}\label{eq:moment_numerics}
    \mathcal{M}(\omega) = \sum_{i=1}^{r} \mathbf{H}(s_i)\mathbf{\ell}_i \omega_i + \sum_{i=1}^{r}\sum_{j=1}^{r} \mathbf{H}(s_i + s_j)\mathbf{b}_{ij} \omega_i \omega_j.
\end{equation}
To match these dynamics, a linear ROM must interpolate the transfer function at $r + (r^2+r)/2$ unique points, increasing the required linear subspace dimension to $r_{\text{linear}} = \mathcal{O}(r^2)$. The online time-complexity for linear methods scales as $\mathcal O(r_{\text{linear}}^2)$ since their reduced matrices can be pre-computed and there is no time-dependent mass-matrix term. However, due to larger state dimension, the execution cost of an equivalent linear ROM scales as $\mathcal{O}(r_{\text{linear}}^2) = \mathcal{O}(r^4)$. In contrast, the QM framework confines the online ODE solver strictly to a low-dimensional state vector $\mathbf{x}_r(t) \in \mathbb{R}^{r}$. By exploiting tensor-structured evaluations in \eqref{eq:fast_kron}, the QM ROM achieves an online time complexity of $\mathcal{O}(r^3)$.

\begin{figure}
    \centering
    \includegraphics[width=0.8\linewidth]{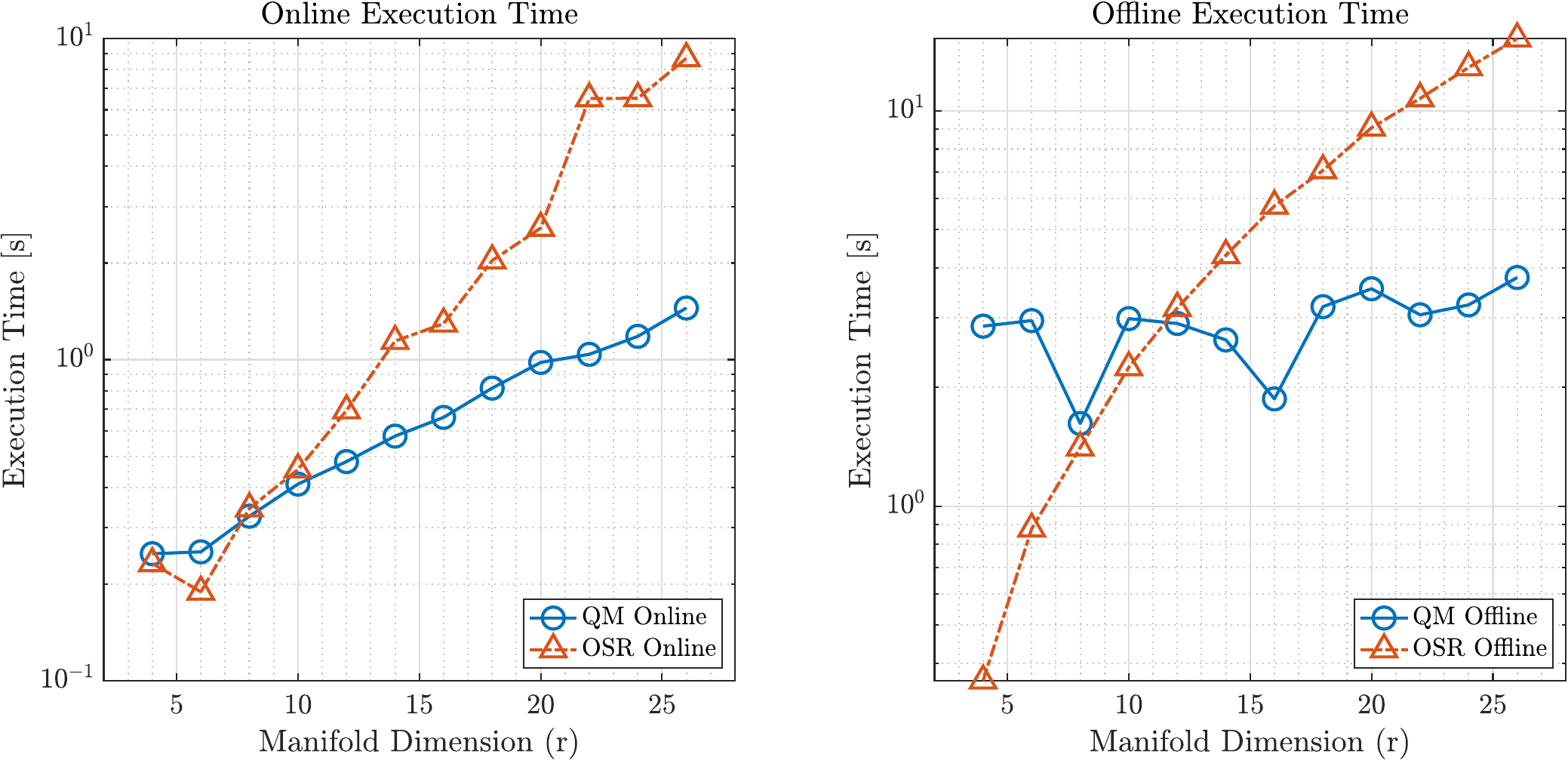}
    \caption{Comparative analysis of the offline and online runtime as a function of the reduced manifold dimension $r$ for One-Sided Rational Interpolation (OSR) and the proposed system-theoretic Quadratic Manifold (QM) framework.}
    \label{fig:comparison_r}
\end{figure}

To empirically validate these online runtime properties, we benchmark both frameworks on the 1D linear advection problem ($n = 1024$) across manifold dimensions $r \in [4, 26]$. We ensure a fair comparison by comparing a quadratic ROM of dimension $r$ with a linear ROM that interpolates the transfer function $\bH(s)$ at the respective $r+(r^2+r)/2$ points. For instance, a QM model with $r = 20$ is evaluated against a linear OSR model interpolating the transfer function at the corresponding 230 points. The OSR implementation constructs a linear projection matrix followed by orthogonalization for numerical stability, selecting only directions corresponding to non-zero singular values in rank-deficient cases. This results in an added offline cost when $r_\text{linear}$ is large. Empirical online runtimes are measured using the CPU time required by the ODE solver to integrate the ROM solution over $t \in [0, 1]$, as shown in Figure~\ref{fig:comparison_r}.

In the online phase (Figure~\ref{fig:comparison_r}, left), the linear OSR model exhibits rapid (theoretically $\mathcal{O}(r^4)$) runtime growth. Conversely, the QM ROM maintains a milder $\mathcal{O}(r^3)$ growth, delivering up to an order-of-magnitude wall-clock speedup at higher dimensions $r$. In the offline phase (Figure~\ref{fig:comparison_r}, right), QM remains highly scalable by solving decoupled Sylvester equations \eqref{eq:pi1_sylvester}--\eqref{eq:pi2_sylvester}. We note that for some choices of signal generator, $\bV_1$ and $\bV_2$ can be computed using their closed-form solutions as shown in Section~\ref{subsec:diagonal_gen}. Because both QM and OSR theoretically guarantee exact interpolation of the targeted nonlinear moments (as per Theorem~\ref{thm:moment_matching} and \eqref{eq:moment_numerics}), the QM framework achieves superior online computational efficiency without compromising on accuracy in this example.

\subsection{Comparison with existing quadratic manifold methods}
\label{sec:comparison_greedy}

To contextualize the efficacy and system-theoretic guarantees of the proposed interpolatory quadratic manifold approach, we benchmark it against state-of-the-art data-driven quadratic manifold techniques. In current literature, non-linear reduced-order models (ROMs) defined on quadratic manifolds are frequently constructed by optimizing for the linear and quadratic projection matrices, $\bV_1$ and $\bV_2$, using state snapshot trajectories. As a benchmark, we consider the data-driven greedy snapshot optimization framework introduced in~\cite{schwerdtner2024greedy}. To ensure a consistent baseline within an intrusive model reduction setting, both the greedy method and our proposed interpolatory formulation are provided full access to the full-order model (FOM) system operators $\bA, \bB,$ and $\bC$.

We evaluate both methods on the 1D advection equation benchmark detailed in Section~\ref{sec:adv_eqn}. A training dataset of $1{,}000$ state snapshots is collected over the initial interval $t \in [0, 1]$ by driving the FOM with the signal generator
\begin{align*}
    \begin{aligned}
    \bS &= \operatorname{blkdiag}\left(\left[\begin{array}{cc}
       0&\mu_1  \\
     \overline{\mu}_1 & 0
    \end{array}\right],  \dots, \left[\begin{array}{cc}
       0& \mu_{r/2}   \\
      \overline{\mu}_{r/2}& 0  
    \end{array}\right]\right)\in\mathbb R^{r\times r},
\end{aligned}
\end{align*}
which results in inputs of the form \begin{align*}
    \bu(t)= \left(\sum_{i=1}^r c_{i,1}\sin(\mu_i t) + c_{i,2}\cos(\mu_i t)\right) + \left(\sum_{i,j=1}^r c_{ij,1} \sin((\mu_i+\mu_j)t) + c_{ij,2} \cos((\mu_i+\mu_j)t)\right),
\end{align*}  where $c_{i,1},c_{i,2},c_{ij,1}, c_{ij,2}$ are constants dependent on $\bL_1,\bL_2,\w_0$ for $1\leq i,j,\leq r$ as presented in \eqref{eq:sin_inputs}. Following the greedy optimization algorithm from~\cite{schwerdtner2024greedy} with our implementation provided in~\cite{padhi_code}, projection matrices $\bV_{g,1}$ and $\bV_{g,2}$ are computed to minimize empirical state trajectory reconstruction error over the $t \in [0, 1]$ training window. Model accuracy is measured across the same training snapshot dataset. The test basis (left projection matrix) is set to $\bW_{g} = \bV_{g,1}$, yielding the greedy quadratic ROM (gQM-ROM) of dimension $r=20$, which has the form \eqref{eq:rom_state}--\eqref{eq:rom_group} with an identity mass-matrix term since $\bW_g^\top\bV_{g,2}=0$. The singular value decay of the snapshot data used to construct the greedy ROM projection matrices is shown in bottom left panel of Figure~\ref{fig:greedy_method}. On the other hand, Algorithm~\ref{alg:moment_matching_rom} is used to construct the proposed interpolatory QM-ROM ($r=20$).

\begin{figure}
    \centering
    \includegraphics[width=1\linewidth]{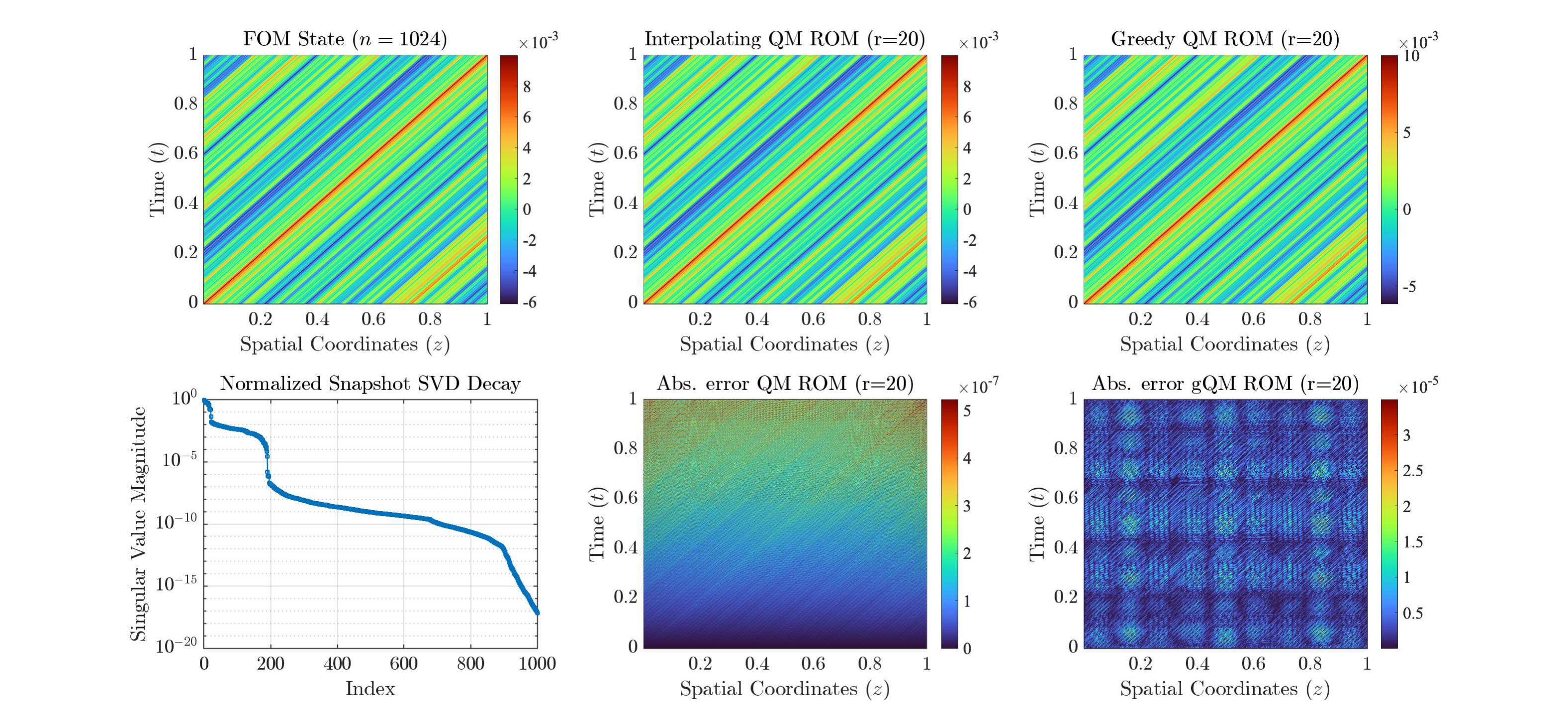}
    \caption{Spatio-temporal state reconstruction (top panel) and absolute error comparison (bottom panel) across the training  interval $t \in [0, 1]$  for the FOM, proposed Interpolatory QM-ROM, and Greedy QM-ROM (gQM-ROM). }
    \label{fig:greedy_method}
\end{figure}

\begin{table}
\centering
\caption{Computational wall-clock time and state reconstruction error comparison between the Full-Order Model (FOM), the proposed interpolatory QM-ROM, and the greedy data-driven QM ROM (gQM-ROM) at reduced order $r = 20$.}
\label{tab:greedy_timing_comparison}
\begin{tabular}{lccc}
\toprule
\textbf{Method} & \textbf{Offline Time (s)} & \textbf{Online Time (s)} & \textbf{Absolute Error} \\
\midrule
Full order model (Full) & --- & 147.0238 & ---  \\
Proposed interpolatory (QM-ROM) & \textbf{3.0298} & \textbf{0.7266 } & $\mathbf{6.196 \times 10^{-5}}$ \\
Greedy QM-ROM (gQM-ROM) \cite{schwerdtner2024greedy} & 23.4082  & 0.4709 & $1.622 \times 10^{-3}$ \\
\bottomrule
\end{tabular}
\end{table}

Figure~\ref{fig:greedy_method} illustrates the spatio-temporal dynamics and corresponding absolute reconstruction errors across the simulation window $t \in [0, 1]$. While both reduced models successfully capture the wave propagation, for this example, the proposed interpolatory QM-ROM achieves more than two orders of magnitude higher accuracy ($\mathcal{O}(10^{-5})$ compared to $\mathcal{O}(10^{-3})$ for gQM-ROM). This might be due to the fact that the snapshot-based approaches are based on minimizing an empirical least-squares 
state-reconstruction fitting over a finite ensemble of snapshots. While the use of a quadratic manifold significantly relaxes the dependence on slow singular value decay relative to linear subspace methods, namely an order-$r$ quadratic manifold aims to capture information up to approximately the $(r + r^2)$-th singular direction rather than just the $r$-th, the fitting residual at order $r$ is still governed by how quickly the neglected singular values decay beyond that effective order, as shown in the bottom left panel of Figure~\ref{fig:greedy_method}. 

On the other hand, for LTI systems, the proposed framework is based on interpolating the steady-state trajectory for a special class of inputs (see Section~\ref{sec:signal_space}). Consequently, the reduced order $r$ in our setting parameterizes the state dimension of the target exogenous signal generator (and thus the richness of the interpolated input family), instead of truncation rank for snapshot data.

{Table~\ref{tab:greedy_timing_comparison} details the computational statistics and reconstruction errors for both the proposed interpolatory QM-ROM and the greedy quadratic ROM (gQM-ROM). While the gQM-ROM provides a robust framework that can be tailored via specific hyperparameter selections, the optimization-free nature of the proposed formulation (Algorithm~\ref{alg:moment_matching_rom}) serves as a complementary approach, naturally yielding consistently low offline construction times. During the online phase, both methods share an identical theoretical time complexity of $\mathcal{O}(r^3)$ due to their quadratic state operators. In practice, the gQM-ROM achieves excellent online execution speeds (as reflected in Table~\ref{tab:greedy_timing_comparison}) by enforcing an identity mass-matrix structure ($\bM_r(\bx_r)=\mathbf{0}$). Notably, this structural advantage can be integrated into the proposed interpolatory QM-ROM (by picking $\bW$ such that $\bW^\top\bV_2=0$) if comparable runtime efficiency is desired.}

Finally, we emphasize that many parameters appear  in  the optimization-based QM methods, such as  snapshot sample density, time horizons, and regularization parameters. Thus, hyperparameter tuning could potentially yield further improvements in the accuracy of gQM-ROM beyond what we obtained here. However, the primary objective of this comparison is not to conduct an exhaustive optimization benchmark, especially considering that gQM-ROM is applicable to general nonlinear dynamical systems rather than the LTI systems we consider here. Moreover, 
gQM-ROM and other snapshot based quadratic manifold approaches are not generally employed in a projection setting, but rather in a data-driven modeling framework. 
Our main goal here, instead, is to demonstrate that the proposed interpolatory QM-ROM provides a compelling, system-theoretic alternative to existing data-driven quadratic manifold approaches. Using the theory of nonlinear moment matching, our framework bypasses  snapshot collection and hyperparameter tuning, delivering certified system-theoretical guarantees alongside computational savings for this example. Extending our framework to a data-driven setting and general nonlinear dynamical systems remain an ongoing research direction.

\section{Conclusions and future directions} \label{sec:conclusion}
 
We introduced an optimization-free algebraic framework for constructing quadratic manifold-based reduced-order models (ROMs). By moving beyond the inherent limitations of traditional linear subspaces, our approach provides a principled, system-theoretic pathway to overcome the slow decay of Kolmogorov $n$-widths that typically affects advection- and transport-dominated problems.
We established rigorous theoretical guarantees for the proposed framework, proving that the constructed ROM successfully matches the nonlinear moments of the full-order model. Furthermore, we demonstrated that the exact center manifold mapping is preserved, thereby ensuring the asymptotic tracking of steady-state outputs under specific classes of inputs.
The theoretical results were validated through numerical experiments on a transport-dominated one-dimensional damped wave equation and one-dimensional advection equation. The numerical results show that the quadratic manifold ROM captures the complex transient dynamics with high fidelity. This reduction directly translates to substantial online and offline computational savings. A natural extension is to generalize this system-theoretic quadratic manifold framework to broader classes of nonlinear or parameterized partial differential equations. Additionally, exploring schemes to optimally select the signal generator parameters will further expand the practical utility of quadratic manifold-based model order reduction.

\printcredits

\section*{Acknowledgements}
We thank Prof. Benjamin Peherstorfer and Prof. Steffen Werner  for their feedback  on the manuscript. 

\section*{Data availability}
The \matlab code for reproducing the plots and comparisons in this paper are available on Github \cite{padhi_code}. 

\bibliographystyle{abbrv}

\bibliography{cas-refs}

\begin{thebibliography}{10}

\bibitem{amsallem2012nonlinear}
D.~Amsallem, M.~J. Zahr, and C.~Farhat.
\newblock Nonlinear model order reduction based on local reduced-order bases.
\newblock {\em International Journal for Numerical Methods in Engineering}, 92(10):891--916, 2012.

\bibitem{ACA05}
A.~C. Antoulas.
\newblock {\em Approximation of Large-Scale Dynamical Systems}.
\newblock Adv. Des. Control 6. Society for Industrial and Applied Mathematics, Philadelphia, PA, 2005.

\bibitem{morAntBG20}
A.~C. Antoulas, C.~A. Beattie, and S.~Gugercin.
\newblock {\em Interpolatory Methods for Model Reduction}.
\newblock Computational Science \& Engineering. Society for Industrial and Applied Mathematics, Philadelphia, PA, 2020.

\bibitem{astolfi2010}
A.~Astolfi.
\newblock Model reduction by moment matching for linear and nonlinear systems.
\newblock {\em IEEE Transactions on Automatic Control}, 55(10):2321--2336, 2010.

\bibitem{astolfi_10year_survey}
A.~Astolfi, G.~Scarciotti, J.~Simard, N.~Faedo, and J.~V. Ringwood.
\newblock Model reduction by moment matching: Beyond linearity a review of the last 10 years.
\newblock In {\em 2020 59th IEEE conference on decision and control (CDC)}, pages 1--16. IEEE, 2020.

\bibitem{bai2022model}
H.~Bai, T.~Mylvaganam, and G.~Scarciotti.
\newblock Model reduction for quadratic-bilinear systems using nonlinear moments.
\newblock In {\em 2022 European Control Conference (ECC)}, pages 1702--1707. IEEE, 2022.

\bibitem{bai2006projection}
Z.~Bai and D.~Skoogh.
\newblock A projection method for model reduction of bilinear dynamical systems.
\newblock {\em Linear algebra and its applications}, 415(2-3):406--425, 2006.

\bibitem{BARNETT_QM1}
J.~Barnett and C.~Farhat.
\newblock Quadratic approximation manifold for mitigating the {K}olmogorov barrier in nonlinear projection-based model order reduction.
\newblock {\em Journal of Computational Physics}, 464:111348, 2022.

\bibitem{benner2015two}
P.~Benner and T.~Breiten.
\newblock Two-sided projection methods for nonlinear model order reduction.
\newblock {\em SIAM Journal on Scientific Computing}, 37(2):B239--B260, 2015.

\bibitem{BenB17}
P.~Benner and T.~Breiten.
\newblock {\em Chapter 6: Model Order Reduction Based on System Balancing}, pages 261--295.
\newblock SIAM, 2017.

\bibitem{benner2023quadratic}
P.~Benner, P.~Goyal, J.~Heiland, and I.~Pontes~Duff.
\newblock A quadratic decoder approach to nonintrusive reduced-order modeling of nonlinear dynamical systems.
\newblock {\em PAMM}, 23(1):e202200049, 2023.

\bibitem{benner2024structured}
P.~Benner, S.~Gugercin, and S.~W. Werner.
\newblock Structured interpolation for multivariate transfer functions of quadratic-bilinear systems.
\newblock {\em Advances in Computational Mathematics}, 50(2):18, 2024.

\bibitem{beyn2004freezing}
W.-J. Beyn and V.~Th{\"u}mmler.
\newblock Freezing solutions of equivariant evolution equations.
\newblock {\em SIAM Journal on Applied Dynamical Systems}, 3(2):85--116, 2004.

\bibitem{black2020projection}
F.~Black, P.~Schulze, and B.~Unger.
\newblock Projection-based model reduction with dynamically transformed modes.
\newblock {\em ESAIM: Mathematical Modelling and Numerical Analysis}, 54(6):2011--2043, 2020.

\bibitem{breiten2012interpolation}
T.~Breiten and P.~Benner.
\newblock Interpolation-based $\mathcal{H}_2$-model reduction of bilinear control system.
\newblock {\em SIAM J. Matrix Anal. Appl}, 33(3):859--885, 2012.

\bibitem{breiten2010krylov}
T.~Breiten and T.~Damm.
\newblock Krylov subspace methods for model order reduction of bilinear control systems.
\newblock {\em Systems \& Control Letters}, 59(8):443--450, 2010.

\bibitem{brewer1978kronecker}
J.~Brewer.
\newblock Kronecker products and matrix calculus in system theory.
\newblock {\em IEEE Transactions on Circuits and Systems}, 25:772--781, 1978.

\bibitem{buchfink2023symplectic}
P.~Buchfink, S.~Glas, and B.~Haasdonk.
\newblock Symplectic model reduction of {H}amiltonian systems on nonlinear manifolds and approximation with weakly symplectic autoencoder.
\newblock {\em SIAM Journal on Scientific Computing}, 45(2):A289--A311, 2023.

\bibitem{buchfink2024approximation}
P.~Buchfink, S.~Glas, and B.~Haasdonk.
\newblock Approximation bounds for model reduction on polynomially mapped manifolds.
\newblock {\em Comptes Rendus. Math{\'e}matique}, 362(G13):1881--1891, 2024.

\bibitem{QM_framework}
P.~Buchfink, S.~Glas, B.~Haasdonk, and B.~Unger.
\newblock Model reduction on manifolds: A differential geometric framework.
\newblock {\em Physica D: Nonlinear Phenomena}, 468:134299, 2024.

\bibitem{burela2023parametric}
S.~Burela, P.~Krah, and J.~Reiss.
\newblock Parametric model order reduction for a wildland fire model via the shifted {POD} based deep learning method.
\newblock {\em arXiv preprint arXiv:2304.14872}, 2023.

\bibitem{carr2012applications}
J.~Carr.
\newblock {\em Applications of centre manifold theory}.
\newblock Springer Science \& Business Media, 2012.

\bibitem{daniel2022physics}
T.~Daniel, F.~Casenave, N.~Akkari, A.~Ketata, and D.~Ryckelynck.
\newblock Physics-informed cluster analysis and a priori efficiency criterion for the construction of local reduced-order bases.
\newblock {\em Journal of Computational Physics}, 458:111120, 2022.

\bibitem{flagg2015multipoint}
G.~Flagg and S.~Gugercin.
\newblock Multipoint volterra series interpolation and {$\mathcal{H}_2$} optimal model reduction of bilinear systems.
\newblock {\em SIAM Journal on Matrix Analysis and Applications}, 36(2):549--579, 2015.

\bibitem{fresca2021comprehensive}
S.~Fresca, L.~Dede’, and A.~Manzoni.
\newblock A comprehensive deep learning-based approach to reduced order modeling of nonlinear time-dependent parametrized {PDE}s.
\newblock {\em Journal of Scientific Computing}, 87(2):61, 2021.

\bibitem{fresca2022pod}
S.~Fresca and A.~Manzoni.
\newblock {POD-DL-ROM}: Enhancing deep learning-based reduced order models for nonlinear parametrized {PDEs} by proper orthogonal decomposition.
\newblock {\em Computer Methods in Applied Mechanics and Engineering}, 388:114181, 2022.

\bibitem{gallivan2004sylvester}
K.~Gallivan, A.~Vandendorpe, and P.~Van~Dooren.
\newblock Sylvester equations and projection-based model reduction.
\newblock {\em Journal of Computational and Applied Mathematics}, 162(1):213--229, 2004.

\bibitem{geelen2023learning}
R.~Geelen, L.~Balzano, and K.~Willcox.
\newblock Learning latent representations in high-dimensional state spaces using polynomial manifold constructions.
\newblock In {\em 2023 62nd IEEE Conference on Decision and Control (CDC)}, pages 4960--4965. IEEE, 2023.

\bibitem{geelen2024learning}
R.~Geelen, L.~Balzano, S.~Wright, and K.~Willcox.
\newblock Learning physics-based reduced-order models from data using nonlinear manifolds.
\newblock {\em Chaos: An Interdisciplinary Journal of Nonlinear Science}, 34(3), 2024.

\bibitem{geelen2022localized}
R.~Geelen and K.~Willcox.
\newblock Localized non-intrusive reduced-order modelling in the operator inference framework.
\newblock {\em Philosophical Transactions of the Royal Society A: Mathematical, Physical and Engineering Sciences}, 380(2229), 2022.

\bibitem{geelen2023operator}
R.~Geelen, S.~Wright, and K.~Willcox.
\newblock Operator inference for non-intrusive model reduction with quadratic manifolds.
\newblock {\em Computer Methods in Applied Mechanics and Engineering}, 403:115717, 2023.

\bibitem{glas2026structure}
S.~Glas and H.~Mu.
\newblock Structure-preserving model reduction on manifolds of port-{H}amiltonian systems.
\newblock {\em arXiv preprint arXiv:2603.08656}, 2026.

\bibitem{gosea2018data}
I.~V. Gosea and A.~C. Antoulas.
\newblock Data-driven model order reduction of quadratic-bilinear systems.
\newblock {\em Numerical Linear Algebra with Applications}, 25(6):e2200, 2018.

\bibitem{graham2018kronecker}
A.~Graham.
\newblock {\em Kronecker products and matrix calculus with applications}.
\newblock Courier Dover Publications, 2018.

\bibitem{kol_wave_decay}
C.~Greif and K.~Urban.
\newblock Decay of the {K}olmogorov {$n$}-width for wave problems.
\newblock {\em Applied Mathematics Letters}, 96:216--222, 2019.

\bibitem{gu2011qlmor}
C.~Gu.
\newblock Qlmor: A projection-based nonlinear model order reduction approach using quadratic-linear representation of nonlinear systems.
\newblock {\em IEEE Transactions on Computer-Aided Design of Integrated Circuits and Systems}, 30(9):1307--1320, 2011.

\bibitem{hesthaven2026nonlinear}
J.~S. Hesthaven, B.~Peherstorfer, and B.~Unger.
\newblock Nonlinear model reduction for transport-dominated problems.
\newblock {\em Acta Numerica}, 35:173--272, 2026.

\bibitem{huang2004nonlinear}
J.~Huang.
\newblock {\em Nonlinear output regulation: theory and applications}.
\newblock SIAM, 2004.

\bibitem{kadeethum2022non}
T.~Kadeethum, F.~Ballarin, Y.~Choi, D.~O’Malley, H.~Yoon, and N.~Bouklas.
\newblock Non-intrusive reduced order modeling of natural convection in porous media using convolutional autoencoders: comparison with linear subspace techniques.
\newblock {\em Advances in Water Resources}, 160:104098, 2022.

\bibitem{kim2022fast}
Y.~Kim, Y.~Choi, D.~Widemann, and T.~Zohdi.
\newblock A fast and accurate physics-informed neural network reduced order model with shallow masked autoencoder.
\newblock {\em Journal of Computational Physics}, 451:110841, 2022.

\bibitem{klein2025entropy}
R.~Klein, B.~Sanderse, P.~Costa, R.~Pecnik, and R.~Henkes.
\newblock Entropy-stable model reduction of one-dimensional hyperbolic systems using rational quadratic manifolds.
\newblock {\em Journal of Computational Physics}, 528:113817, 2025.

\bibitem{krah2025robust}
P.~Krah, A.~Marmin, B.~Zorawski, J.~Reiss, and K.~Schneider.
\newblock A robust shifted proper orthogonal decomposition: Proximal methods for decomposing flows with multiple transports.
\newblock {\em SIAM Journal on Scientific Computing}, 47(2):A633--A656, 2025.

\bibitem{moreschini2025moment}
A.~Moreschini, M.~Scandella, A.~Astolfi, and T.~Parisini.
\newblock Moment matching by kernel-based learning.
\newblock {\em IEEE Transactions on Automatic Control}, 2025.

\bibitem{ohlberger2013nonlinear}
M.~Ohlberger and S.~Rave.
\newblock Nonlinear reduced basis approximation of parameterized evolution equations via the method of freezing.
\newblock {\em Comptes Rendus Mathematique}, 351(23-24):901--906, 2013.

\bibitem{otto2023learning}
S.~E. Otto, G.~R. Macchio, and C.~W. Rowley.
\newblock Learning nonlinear projections for reduced-order modeling of dynamical systems using constrained autoencoders.
\newblock {\em Chaos: An Interdisciplinary Journal of Nonlinear Science}, 33(11), 2023.

\bibitem{padhi_code}
R.~Padhi.
\newblock {Beyond linear subspaces: Nonlinear moment matching meets quadratic manifolds}.
\newblock \url{https://github.com/Rewtus/Moment-matching-quadratic-manifolds}, 2026.

\bibitem{papapicco2022neural}
D.~Papapicco, N.~Demo, M.~Girfoglio, G.~Stabile, and G.~Rozza.
\newblock The neural network shifted-proper orthogonal decomposition: a machine learning approach for non-linear reduction of hyperbolic equations.
\newblock {\em Computer Methods in Applied Mechanics and Engineering}, 392:114687, 2022.

\bibitem{paxton2026fast}
G.~Paxton, S.~Cheon, R.~Geelen, and S.~A. McQuarrie.
\newblock Fast quadratic manifold learning for nonlinear dimensionality reduction in large-scale systems using {R}iemannian optimization.
\newblock {\em arXiv preprint arXiv:2605.26039}, 2026.

\bibitem{peherstorfer2022breaking}
B.~Peherstorfer.
\newblock Breaking the {K}olmogorov barrier with nonlinear model reduction.
\newblock {\em Notices of the American Mathematical Society}, 69(5):725--733, 2022.

\bibitem{pinkus2012n}
A.~Pinkus.
\newblock {\em {$N$}-width in Approximation Theory}.
\newblock Springer Science \& Business Media, 2012.

\bibitem{reiss2021optimization}
J.~Reiss.
\newblock Optimization-based modal decomposition for systems with multiple transports.
\newblock {\em SIAM Journal on Scientific Computing}, 43(3):A2079--A2101, 2021.

\bibitem{reiss2018shifted}
J.~Reiss, P.~Schulze, J.~Sesterhenn, and V.~Mehrmann.
\newblock The shifted proper orthogonal decomposition: A mode decomposition for multiple transport phenomena.
\newblock {\em SIAM Journal on Scientific Computing}, 40(3):A1322--A1344, 2018.

\bibitem{rewienski2003trajectory}
M.~Rewienski and J.~White.
\newblock A trajectory piecewise-linear approach to model order reduction and fast simulation of nonlinear circuits and micromachined devices.
\newblock {\em IEEE Transactions on computer-aided design of integrated circuits and systems}, 22(2):155--170, 2003.

\bibitem{rowley2003reduction}
C.~W. Rowley, I.~G. Kevrekidis, J.~E. Marsden, and K.~Lust.
\newblock Reduction and reconstruction for self-similar dynamical systems.
\newblock {\em Nonlinearity}, 16(4):1257--1275, 2003.

\bibitem{rowley2000reconstruction}
C.~W. Rowley and J.~E. Marsden.
\newblock Reconstruction equations and the {K}arhunen--{L}o{\`e}ve expansion for systems with symmetry.
\newblock {\em Physica D: Nonlinear Phenomena}, 142(1-2):1--19, 2000.

\bibitem{rugh1981nonlinear}
W.~J. Rugh.
\newblock {\em Nonlinear system theory}.
\newblock Johns Hopkins University Press Baltimore, 1981.

\bibitem{rutzmoser2017generalization}
J.~B. Rutzmoser, D.~J. Rixen, P.~Tiso, and S.~Jain.
\newblock Generalization of quadratic manifolds for reduced order modeling of nonlinear structural dynamics.
\newblock {\em Computers \& Structures}, 192:196--209, 2017.

\bibitem{scarciotti2017data}
G.~Scarciotti and A.~Astolfi.
\newblock Data-driven model reduction by moment matching for linear and nonlinear systems.
\newblock {\em Automatica}, 79:340--351, 2017.

\bibitem{scarciotti2017nonlinear}
G.~Scarciotti and A.~Astolfi.
\newblock Nonlinear model reduction by moment matching.
\newblock {\em Foundations and Trends in System and Control}, 4(3-4):224--409, 2017.

\bibitem{scarciotti2024interconnection}
G.~Scarciotti and A.~Astolfi.
\newblock Interconnection-based model order reduction-a survey.
\newblock {\em European Journal of Control}, 75:100929, 2024.

\bibitem{schwerdtner2025empirical}
P.~Schwerdtner, S.~Gugercin, and B.~Peherstorfer.
\newblock Empirical sparse regression on quadratic manifolds.
\newblock {\em SIAM Journal on Scientific Computing}, 47(6):A3085--A3107, 2025.

\bibitem{schwerdtner2024online}
P.~Schwerdtner, P.~Mohan, A.~Pachalieva, J.~Bessac, D.~O'Malley, and B.~Peherstorfer.
\newblock Online learning of quadratic manifolds from streaming data for nonlinear dimensionality reduction and nonlinear model reduction.
\newblock {\em arXiv preprint arXiv:2409.02703}, 2024.

\bibitem{schwerdtner2025online}
P.~Schwerdtner, P.~Mohan, A.~Pachalieva, J.~Bessac, D.~O’Malley, and B.~Peherstorfer.
\newblock Online learning of quadratic manifolds from streaming data for nonlinear dimensionality reduction and nonlinear model reduction.
\newblock {\em Proceedings of the Royal Society A: Mathematical, Physical and Engineering Sciences}, 481(2314), 2025.

\bibitem{schwerdtner2024greedy}
P.~Schwerdtner and B.~Peherstorfer.
\newblock Greedy construction of quadratic manifolds for nonlinear dimensionality reduction and nonlinear model reduction.
\newblock {\em SIAM Journal on Mathematics of Data Science}, 8(3):793--819, 2026.

\bibitem{sharma2023symplectic}
H.~Sharma, H.~Mu, P.~Buchfink, R.~Geelen, S.~Glas, and B.~Kramer.
\newblock Symplectic model reduction of {H}amiltonian systems using data-driven quadratic manifolds.
\newblock {\em Computer Methods in Applied Mechanics and Engineering}, 417:116402, 2023.

\bibitem{simard2024parameterization}
J.~D. Simard, A.~Moreschini, and A.~Astolfi.
\newblock Parameterization of all differential-algebraic moment matching interpolants.
\newblock {\em IEEE Transactions on Automatic Control}, 70(3):1875--1882, 2024.

\bibitem{unger2019kolmogorov}
B.~Unger and S.~Gugercin.
\newblock Kolmogorov {$n$}-widths for linear dynamical systems.
\newblock {\em Advances in Computational Mathematics}, 45(5):2273--2286, 2019.

\bibitem{volkwein2011pod}
S.~Volkwein.
\newblock Model reduction using proper orthogonal decomposition.
\newblock Lecture notes, Institute of Mathematics and Scientific Computing, University of Graz, 2011.

\bibitem{werner2021structure}
S.~W. Werner.
\newblock {\em Structure-preserving model reduction for mechanical systems}.
\newblock PhD thesis, Otto-von-Guericke-Universität Magdeburg, Fakultät für Mathematik, 2021.

\end{thebibliography}

\end{document}